\newtheorem{thm}{Theorem}[section]
\newtheorem{prop}[thm]{Proposition}
\newtheorem{coro}[thm]{Corollary}
\newtheorem{lem}[thm]{Lemma}
\newtheorem{conj}[thm]{Conjecture}
\newtheorem{ex}[thm]{Example}
       \theoremstyle{definition}
       \newtheorem{dfn}{Definition}[section]
       \theoremstyle{remark}
       \newtheorem*{rmk}{Remark}
\newcommand{\PP}{\mathbb{P}}
\newcommand{\E}{\mathbb{E}}
\newcommand{\R}{\mathbb{R}}
\newcommand{\C}{\mathbb{C}}
\renewcommand{\H}{\mathbb{H}}
\newcommand{\N}{\mathbb{N}}
\newcommand{\Z}{\mathbb{Z}}
\newcommand{\U}{\mathrm{U}}
\newcommand{\SU}{\mathrm{SU}}
\newcommand{\SO}{\mathrm{SO}}
\newcommand{\Sp}{\mathrm{Sp}}
\newcommand{\GL}{\mathrm{GL}}
\newcommand{\Sbb}{\mathbb{S}}
\newcommand{\Vbb}{\mathbb{V}}
\newcommand{\Ebb}{\mathbb{E}}
\newcommand{\Fbb}{\mathbb{F}}
\newcommand{\Ld}{\mathrm{L}}
\newcommand{\sym}{\mathrm{sym}}
\newcommand{\YM}{\mathrm{YM}}
\newcommand{\Tr}{\mathrm{Tr}}
\newcommand{\tr}{\mathrm{tr}}
\newcommand{\build}[3]{\mathrel{\mathop{\kern 0pt#1}\limits_{#2}^{#3}}}
\def\br{\begin{color}{blue}}
\def\bb{\begin{color}{blue}}
\def\bg{\begin{color}{blue}}
\def\er{\end{color}}
\def\eg{\end{color}}
\def\eb{\end{color}}
\def\a{\alpha}
\def\b{\beta}
\def\ve{\varepsilon}
\def\g{\gamma}
\def\l{\lambda}
\def\L{\Lambda}
\def\P{\mathbb{P}}
\def\mfk{\mathfrak}
\def\cG{\mathcal{G}}
\def\<{\langle}
\def\>{\rangle}
\def\End{\mathrm{End}}
\def\pl{\partial}
\def\lto{\longrightarrow}
\def\bth{\begin{thm}}
\def\eth{\end{thm}}
\author{Antoine Dahlqvist\thanks{University of Sussex, School of Mathematical and Physical Sciences, Pevensey 3 Building, Brighton, UK} \and Thibaut Lemoine\thanks{Universit\'e de Lille, CNRS, UMR 9189 - CRIStAL, 59651 Villeneuve d'Ascq, France}}
\title{Large $N$ limit of Yang--Mills partition function and\\ Wilson loops on compact surfaces}
\date{}							% Activate to display a given date or no date
\begin{document}
\bibliographystyle{plain}
\maketitle
\abstract{We compute the large $N$ limit of several objects related to the two-dimensional Euclidean Yang--Mills measure on  closed, connected, orientable surfaces $\Sigma $ with genus $g\geq 1$, when a structure group is taken among 
the classical groups of order $N$. We first generalise to all classical groups the convergence of partitions functions obtained by the second author for unitary groups. We then apply this result to prove convergence of Wilson loop 
observables for loops included within a topological disc of $\Sigma$.  This convergence solves a conjecture of B. Hall  and shows moreover that the limit is independent of the topology of $\Sigma$ and is equal to an evaluation of the 
planar 
master field. Besides, using similar arguments, we show that Wilson loops vanish asymptotically for all non-contractible simple loops.}

%\section{}
%\subsection{}

\section{Introduction}

The purpose of this article is to study a model of random matrices originating from 2D Euclidean quantum field theory, known as the two-dimensional Yang--Mills measure. This model is defined for a classical compact Lie group and a 
compact surface endowed with an area measure and can be understood as a gauge theory. Each loop on the surface 
defines an observable called 
Wilson loop. We are interested here in Wilson 
loops when 
the size  $N$ of the matrices goes to infinity, while the loop and the surface are fixed.

Let us attempt to give an account of the origin and state of this problem. The question seems to have first appeared in a mathematical paper in \cite{Sin}. Therein, the candidate limit of Wilson loops on an arbitrary surface $\Sigma$ 
is called 
master 
field. This paper was inspired by the large $N$ limits considered in quantum gauge theories \cite{DK2,KK,Mig,MM,GrossMat,Gop}, which started after the landmark work of t'Hooft \cite{Hoo}. Since then, the Yang--Mills measure in two 
dimensions 
has been rigorously defined\footnote{See \cite{LS} for a historical review on the motivations of these approaches.} \cite{Gro,Dri,GKS,Sen4,Sen0,Sen,Lev3}, and the latter question has received the attention of several 
mathematicians\footnote{See \cite{Lev7} for a recent reviews paper on the Yang--Mills 
measure and the master fields in two dimensions. See also \cite{Chatt} for a study of large $N$ limits of Wilson loops, for discrete Yang--Mills measure in dimension higher than $2$.}   \cite{Xu,Sen2,AS,Lev,DHK,DGHK,DN,Hal2}. It
is now
known that the limit indeed exists when the surface is the plane \cite{AS,Lev2} or the sphere \cite{DN}, giving rise to two different objects. 
 Simultaneously to and independently from \cite{DN}, conditional convergence results were obtained in \cite{Hal2}  for loops within a topological disc embedded in a compact surface, with different set of assumptions for simple 
 loops. Both works \cite{DN,Hal2} relied on the so-called Makeenko--Migdal equations, which allow to tackle the problem recursively in the complexity of loops. These equations first appeared in physics in \cite{MM,KK} and were proved 
 rigorously on the plane in \cite{Lev2,Dah2,DHK} and on surfaces in \cite{DHK,DGHK}.

Our main result, Theorem \ref{__THM: Disc YMCont}, shows that for all loops included within a topological disc of $\Sigma$, the limit Wilson loop converges and its limit is given by the value of the \emph{master field on the plane} evaluated at the loop obtained by 
embedding the same disc  in the plane instead  of $\Sigma$, in a way that preserves the area measure. In that  sense, the behaviour of the Yang--Mills measure within a topological disc is not affected asymptotically by the topological constraint imposed by the 
surface.  The idea of a "trivial" behaviour of the large N limit of the Yang--Mills measure on closed surfaces of genus $g\ge 1$  appeared previously in the physics literature.\footnote{at least for the partition function, see \cite[Section 3]{Dou} or \cite{Rus}.} 
Nonetheless, we could not find mentioned the specific manifestation of this triviality appearing in our main result. 

Our second result Theorem \ref{__THM:simple non-contrac} shows that for any simple non-contractible loop the Wilson loop of any iteration vanishes asymptotically. This leaves the case of loops which are neither embedded in a 
topological disc nor an iteration of a simple loop. We investigate this question in the next paper in the series \cite{DL}. 

Our argument does not rely here on the Makeenko--Migdal equations but mainly on the convergence of another quantity: the partition function. We prove the convergence of the partition function using harmonic analysis on the classical 
compact groups, generalising a result of \cite{Gur} and of the second author \cite{Lem}.

% The purpose of this paper is double: our initial goal, which was to investigate large $N$ limit of several observables in Yang--Mills theory, led us to consider more general probabilistic models, which are respectively Gaussian random variables in a lattice and Brownian motions on groups pinned by word measures. We will first motivate the use of these 
%models by explaining a few generalities about Euclidean Yang--Mills theory on compact surfaces, and state the main results we will prove in this setting, then we will describe the tools used to achieve these proofs.

The rest of the paper is divided in three sections. The first one recalls the setup of the problem, presents the main results and an idea of proof. The second section recalls the necessary notions from representation theory of compact groups and the result of 
\cite{Gur,Lem} on partition functions which we generalise to all group series of compact matrix Lie groups and all area parameter. The last one gives the proof of our main results on Wilson loops. 

\tableofcontents
\section{Setup and statement of results}

\subsection{Heat kernel on compact Lie groups}

\subsubsection{Heat kernel}
 In this text, $G$ will denote a compact Lie group endowed with a bi-invariant inner product. The heat kernel on $G$ is the family of smooth functions $(p_t,t\in (0,\infty))$ on $G$ satisfying 
$$\frac d {dt} p_t(g)= \frac{1}{2} \Delta (p_t)(g) \text{ for } t>0 \text{ and }g\in G $$
and for any continuous function $f,$
$$\lim_{t\to 0}\int_G  f(g)p_t(g)dg= f(1).$$
We denote here by $dg$ and  $\Delta$  the Haar probability measure on $G$ and   the Laplace--Beltrami operator associated to its inner product, while we write $1$ for the identity element of $G.$ The functions $p_t$ are central and invariant by inversion for all 
$t>0,$ and form a semigroup for 
the convolution product on $G$: that is, for $t>0,$
$$ p_{t}(xgx^{-1})= p_t(g) \text{ and }p_t(g^{-1})=p_t(g) \hspace{0,2 cm}\forall x,g\in G,$$
and
$$p_t*p_s=p_{t+s},\forall s>0.$$

\subsubsection{Classical compact Lie groups}  
\label{sec:Intro Class Gp}
In the sequel, we shall say that $G_N$ is a \emph{classical group of size} $N$\footnote{Although $\Sp(N)$ is a group of complex matrices of size $2N$, the denomination ``size $N$'' is not misleading, as it can be also considered as a subgroup of $\GL_N(\H)$. We do not exploit this property, and choose such a terminology only for the sake of simplicity.} if it is equal to one of the following matrix Lie groups:
\begin{enumerate}
\item The unitary group $\U(N)=\{U\in\GL_N(\C):UU^*=I_N\}$,
\item The special unitary group $\SU(N)=\{U\in\U(N):\det(U)=1\}$,
\item The special orthogonal group $\SO(N)=\{O\in\GL_N(\R):OO^t=I_N\}$,
\item The compact symplectic group $\Sp(N)=U(2N)\cap\{S\in\GL_{2N}(\C):S^tJS=J\}$ where $J\in\GL_{2N}(\C)$ is defined by
\[
J=\begin{pmatrix}
0 & I_N\\
-I_N & 0
\end{pmatrix}.
\]
\end{enumerate}

We fix the bi-invariant metric as follows. Assume that $G_N$ is a subgroup of $\GL_n(\C)$ (so that $n=N$ for $\U(N)$, $\SU(N)$, $\SO(N)$ and $n=2N$ for $\Sp(N)$). We set an integer parameter\footnote{This parameter corresponds to the Dyson index in random matrix theory. For details over its significance, see for instance Section 4.1 of \cite{AGZ}. We introduce this parameter so that standard Brownian motions on these Lie algebras all converge to the same process when $N\to\infty$, see for instance \cite[Section 2]{Dah} for an explanation.} $\beta$ which is equal to 1 if $G_N=\SO(N)$, 2 if $G_N=\U(N)$, $\SU(N)$, and 4 if $G_N=\Sp(N)$. We endow the Lie algebra $\mathfrak{g}_N$ of $G_N$ with the scalar product

\begin{equation}
\langle X,Y\rangle = \frac{\beta n}{2}\Tr(X^*Y),\forall X,Y\in \mfk g_N.\label{------eq:inner P LieAlg}
\end{equation}
Here $\Tr$ denotes the trace of the above matrix, that is the non-normalised sum of diagonal coefficients. It is conjugation-invariant in the sense that
$$\langle g Xg^{-1},gYg^{-1}\rangle= \<X,Y\>, \text{ for all } X,Y\in \mfk g_N \text{ and }g\in G_N,$$
and defines therefore a bi-invariant metric on $G_N.$ 

\begin{rmk} Except in case 1., there is up to constant a unique invariant inner product. The above choice of scaling is standard in random matrices. For instance, the gaussian vector on Hermitian matrices obtained by composition of 
the above scalar product with the multiplication by $i$ is the classical Gaussian Unitary Ensemble.
\end{rmk}

\subsection{2D-maps and multiplicative functions}

Assume that $\Sigma$ is a two dimensional compact Riemannian manifold, with Riemannian distance $d_\Sigma$ and genus $g\geq 0$. It is homeomorphic to a $4g$-gon whose sides are, counterclockwise, $a_1,b_1,a_1^{-1},b_1^{-1},\ldots,a_g,b_g,a_g^{-1},b_g^{-1}$, such that each two edges with same letter are glued together, while respecting the orientation. The polygon is called the \emph{fundamental domain} of the surface. If $\Sigma$ has a nonempty boundary $\partial \Sigma$, the latter can be described as the union of $n$ connected components $(C_i)_{1\leq i\leq n}$, each homeomorphic to the unit circle $S^1$. We introduce here some notations used throughout this text. 

Denote by $\mathrm{P}(\Sigma)$ the set of continuous maps $[0,1]\to \Sigma$ with positive and finite length, up to Lipschitz reparametrisation. When $\g\in \mathrm{P}(\Sigma) $, $\mathscr{L}(\g)$ denotes its length,   $\g^{-1}\in \mathrm{P}(\Sigma)$ its reverse, $\underline \g= \tilde \g(0)$ its starting point and $\overline \g=\tilde \g (1)$ its endpoint, where $\tilde \g$ is some parametrisation. When $\overline \g=\underline \g$, we say that $\g$ is a \emph{loop} of $\Sigma$ and write $$\mathrm{L}(\Sigma)=\{\g\in \mathrm{P}(\Sigma): \underline{\g}=\overline{\g}\}.$$ 
When $\g_1,\g_2\in \mathrm{P}(\Sigma)$ with $\underline{\g}_2=\overline{\g}_1,$ $\g_1\g_2\in \mathrm{P}(\Sigma)$ stands for their concatenation. A distance on $\mathrm{P}(\Sigma) $ is defined \cite{Lev2} setting for all $\g_1,\g_2\in \mathrm{P}(\Sigma),$
\begin{equation}
d(\g_1,\g_2)= |\mathscr{L}(\g_1)-\mathscr{L}(\g_2)| +\inf \inf_{t\in [0,1]}\{ d_{\Sigma}(\tilde\g_1(t),\tilde \g_2(t)) \}\label{Distance Loops}
\end{equation}
where the first infimum is taken over all Lipschitz parametrisations $\tilde \g_1,\tilde \g_2$ of respectively $\g_1$ and $\g_2$. In the following paragraphs, we will consider subsets of $\mathrm{P}(\Sigma)$ corresponding to loops traced in embedded graphs.

%If we endow $\Sigma$ with a finite set $\mathcal{S}$ of paths of $\build{\Sigma}{}{\circ}$ that are either disjoint or equal up to orientation, such that $\gamma\in\mathcal{S}$ if and only if $\gamma^{-1}\in\mathcal{S}$, then the couple $(\Sigma,\mathcal{S})$ is called a \emph{marked surface}. Denote by $\mathcal{B}(\Sigma)$ the set of connected components of $\partial\Sigma$, each taken twice, one for each orientation. There is a natural action of $\Z/2\Z$ on $\mathcal{S}\cup\mathcal{B}(\Sigma)$ by orientation-reversal.
 
 If $\Psi$ is a diffeomorphism of $\Sigma$ and $\ell\in \Ld(\Sigma),$   $\psi(\ell)\in \Ld(\Sigma)$ is the loop of $\Sigma$ obtained by composition of a parametrisation of $\ell$ with $\Psi$.

\subsubsection{Topological maps on compact surfaces}
 
We follow here conventions of \cite[Section 1.3.2]{LZ}. A \emph{graph} $\mathcal{G}$ is a triple $(\Vbb,\Ebb,I)$ consisting of a set $\Vbb$ of \emph{vertices}, a set $\Ebb$ of \emph{edges} and an \emph{incidence relation} $I$ such that an edge is incident to either one vertex or two distinct vertices, called \emph{endpoints}. It can be given an \emph{orientation} by setting, for any $e\in\E$, a source $\underline{e}\in\Vbb$ and a target $\overline{e}\in\Vbb$. An oriented graph can be then represented by a quadruple $(\Vbb,\Ebb,s,t)$ where $\Vbb$ is the set of vertices, $\Ebb$ the set of oriented edges, and $s,t:\Ebb\to\Vbb$ are the functions that map respectively an edge $e$ to its source and target. An \emph{isomorphism} between two graphs $\mathcal{G}_1=(\Vbb_1,\Ebb_1,s_1,t_1)$ and $\mathcal{G}_2=(\Vbb_2,\Ebb_2,s_2,t_2)$ is a bijection $\phi:\Vbb_1\cup\Ebb_1\to\Vbb_2\cup\Ebb_2$ that sends $\Vbb_1$ (resp. $\Ebb_1$) onto $\Vbb_2$ (resp. $\Ebb_2$), and such that
\[
s_2(\phi(e))=\phi(s_1(e)), \ t_2(\phi(e))=\phi(t_1(e)),\ \forall e\in\Ebb.
\]

A \emph{topological map} $M$ on $\Sigma$ is a finite oriented graph $\mathcal{G}=(\Vbb,\Ebb,s,t)$ endowed with an embedding $\theta:\mathcal{G}\to\Sigma$, called \emph{drawing of the graph}, such that:
\begin{itemize}
\item $\Sigma$ is a compact, connected and orientable surface,
\item the vertices are drawn as distinct points of $\Sigma$,
\item oriented edges are drawn as oriented continuous curves that only intersect at their endpoints,
\item for any $e\in\Ebb$, there is an edge $e^{-1}\in\Ebb$ such that $\theta(e^{-1})=\theta(e)^{-1}$,
\item the set $\Fbb=\Sigma\setminus\bigcup_{e\in\Ebb} \{\theta(e)\}$ is given by a union of open discs called \emph{faces} of the map.
\end{itemize}

In order to avoid too heavy notations, we will always identify vertices and edges with their drawing, so that a map on $\Sigma$ can be represented by $(\Vbb,\Ebb,\Fbb)$ (the applications $s$ and $t$ will remain implicit from now on). $\Sigma$ is called the \emph{underlying surface} of $M$.  If $v\in\Vbb$ is a vertex in $M$, we denote by $\mathrm{P}(M)$ (resp. $\mathrm{L}(M)$, $\mathrm{L}_v(M)$) the set of paths (resp. loops, loops with base $v$) in $M$ obtained by concatenation of oriented edges. A loop is simple when each vertex of $M$ is the source of at most one of its edges.  

\begin{figure}[!h]
\centering
\includegraphics[scale=0.8]{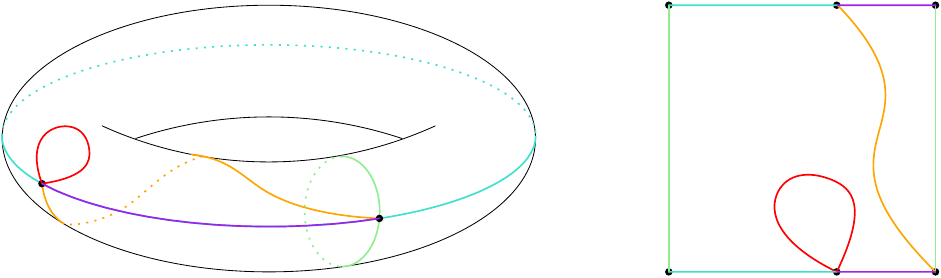}
\caption{\small An example of topological map on a torus (on the left), and its representation as an abstract graph (on the right) whose opposite edges of the same colour are identified.}\label{fig:maps}
\end{figure}

Two maps $M_1=(\Vbb_1,\Ebb_1,\Fbb_1)$ and $M_2=(\Vbb_2,\Ebb_2,\Fbb_2)$, respectively on $\Sigma_1$ and $\Sigma_2$ and with associated graphs $\mathcal{G}_1 $ and $\mathcal{G}_2$, are \emph{equivalent} if there is an orientation-preserving homeomorphism $\phi:\Sigma_1\to\Sigma_2$ such that the restriction of $\phi$ to $M_1$ induces an isomorphism between $\mathcal{G}_1$ and $\mathcal{G}_2$. The \emph{genus} of a topological map in $\Sigma$ is the genus of $\Sigma$, and it does not depend on its equivalence class.

% Note that Dehn twists, that generate the mapping class group of $\Sigma$ (see \cite{Hum}), preserve the equivalence class of a map on $\Sigma$.

Mind that in the above definition, the boundary of $\Sigma$ might be non-empty. The following result, which is a particular case of Proposition 1.3.10 in \cite{Lev2}, allows to  identify boundary components of $\Sigma$ with loops of a map, up to rerooting. 

\begin{prop}\label{Prop: Boundary}
Assume that $M$ is a topological map on a compact, connected, orientable surface $\Sigma$ endowed with a Riemannian metric,  such that $\partial\Sigma$ has positive and finite length. Then, any connected component $C$ of $\partial\Sigma$ is the drawing of an element of $\mathrm{L}(M)$.
\end{prop}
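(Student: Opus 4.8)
The plan is to reduce the statement to the cited Proposition 1.3.10 of \cite{Lev2} by checking that the hypotheses of that proposition are met and that its conclusion translates into the assertion here. First I would recall that a topological map $M$ on $\Sigma$ comes with a drawing $\theta$ whose complement $\Fbb = \Sigma \setminus \bigcup_{e \in \Ebb}\{\theta(e)\}$ is a disjoint union of open discs; in particular $\partial\Sigma \subseteq \bigcup_{e \in \Ebb}\{\theta(e)\}$, since no point of $\partial\Sigma$ can lie in the interior of an embedded open disc of $\Sigma$. So each connected component $C$ of $\partial\Sigma$ is covered by the $1$-skeleton of $M$.

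Next I would fix a component $C$ of $\partial\Sigma$. Since $C$ is homeomorphic to $S^1$ and is contained in the union of the finitely many compact arcs $\theta(e)$, I want to show that $C$ is in fact a union of entire edges of $M$, i.e.\ that $C$ contains no proper sub-arc of any edge: indeed, if an edge $\theta(e)$ met $C$ in a point of its relative interior but $\theta(e)$ were not contained in $C$, then near that point $\Sigma$ would look like a half-plane with an arc ending transversally, which is incompatible with $C$ being a boundary component along which edges can only run or abut at vertices. A clean way to organise this is to invoke the local structure of a map near $\partial\Sigma$ together with the fact that vertices are isolated points of the skeleton: $C$ minus the (finitely many) vertices it contains is a disjoint union of open arcs, each of which must be the relative interior of a single edge, and these edges and vertices assemble, following the cyclic order along $C$, into a loop $\ell \in \mathrm{L}(M)$ with image $C$. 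Here one uses that $\theta$ is an embedding of the graph, so distinct edges meet only at endpoints, which forces the concatenation to be well defined up to the choice of starting vertex and orientation. This is where I expect the main work to lie, and it is precisely the content borrowed from \cite[Proposition 1.3.10]{Lev2}; the positivity and finiteness of the length of $\partial\Sigma$ is used to ensure each such edge is an element of $\mathrm{P}(\Sigma)$ (positive finite length) so that the concatenation indeed defines an element of $\mathrm{L}(M) \subseteq \mathrm{L}(\Sigma)$, rather than a degenerate or infinite-length curve.

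Finally I would note that the resulting loop $\ell$ depends on a choice of base vertex on $C$ and of orientation, which accounts for the phrase ``up to rerooting'' in the statement preceding the proposition; any two such choices give loops differing by a cyclic permutation of their edge sequence (and possibly reversal), all having the same image $C$. This yields that $C$ is the drawing of an element of $\mathrm{L}(M)$, completing the proof. The only genuine obstacle is the local analysis near $\partial\Sigma$ showing that edges cannot ``cross'' a boundary component, and since this is exactly what Proposition 1.3.10 of \cite{Lev2} provides, the argument here is essentially a matter of quoting and unpacking that result.
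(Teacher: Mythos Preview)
Your proposal is correct and matches the paper's treatment: the paper does not give its own proof but simply records the statement as a particular case of Proposition~1.3.10 in \cite{Lev2}, so your reduction to that reference is exactly what is done there. Your additional unpacking of why $\partial\Sigma$ lies in the $1$-skeleton and why each boundary component decomposes into whole edges is more than the paper supplies, and is a sound sketch of the underlying argument.
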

When there is a metric on $\Sigma$ such that each edge of  a topological map $M$ is mapped to a curve of finite length,  we call $M$  \emph{rectifiable.} When the boundary of $\Sigma$ is empty, we shall say that the map $M$ is \emph{closed}. This latter 
property does not depend on the equivalence class of $M.$

\subsubsection{Topological maps on $\R^2$ or an open disc}
A topological map $M$ on the \emph{plane $\R^2$ or an open disc} is a topological map on the sphere $\Sbb^2$ together with a marked face $f_\infty$. The latter is called the infinite face of $M$, and it contains the point at infinity when one makes the identification $\Sbb^2\simeq\R^2\cup\{\infty\}$.  Unless mentioned otherwise, when $M$ is a topological map on the plane or an open disc, $\Fbb$ will denote the set  of \emph{bounded} faces, and $\hat{\Fbb}$ will denote $\Fbb\cup\{f_\infty\}$ whenever it makes sense.

%From now on, for any map $M$, be it on the plane, a disc or a closed, connected, orientable surface, $\Fbb$ will denote by default the set of \emph{bounded} faces, and $\hat{\Fbb}$ will denote $\Fbb\cup\{f_\infty\}$ whenever it makes sense.

\subsubsection{Multiplicative functions}

For any $P\subset\mathrm{P}(\Sigma)$ and any compact group $G$, a \emph{multiplicative function} $h:P\to G$ is a function that satisfies $h_{\gamma^{-1}}={h_\gamma}^{-1}$ for any $\gamma\in P$ such that $\gamma^{-1}\in P$, and $h_{\gamma_1\gamma_2}=h_{\gamma_2}h_{\gamma_1}$ for any $\gamma_1,\gamma_2\in P$ such that $\gamma_1\gamma_2\in P$. We denote by $\mathcal{M}(P,G)$ the space of multiplicative functions from $P$ to $G$. The gauge group $\Gamma=\mathscr{C}^\infty(\Sigma,G)$ acts on it by \emph{gauge transformations}: for any $j\in\Gamma$ and $h\in\mathcal{M}(P,G)$,
\[
(j\cdot h)_\gamma = j(\overline{\gamma})^{-1} h_\gamma j(\underline{\gamma}),\ \forall \gamma\in P.
\]
Considering topological maps, gauge-invariance of multiplicative functions is equivalent to a $G$-invariance, as stated by the following lemma which is a particular instance of \cite[Lemma 2.1.5]{Lev2}.

\begin{lem}
Let $M$ be a topological map in $\Sigma$,  $v\in\Vbb$ and $\gamma_1,\ldots,\gamma_n\in\mathrm{P}(M)$. There exist $\ell_1,\ldots,\ell_m\in\mathrm{L}_v(M)$ such that for any $f:G^n\to\C$ with  $h\mapsto f(h_{\gamma_1},\ldots,h_{\gamma_n})$  gauge-invariant on $\mathcal{M}(\mathrm{P}(M),G)$, 
\[
f(h_{\gamma_1},\ldots,h_{\gamma_n})=\tilde{f}(h_{\ell_1},\ldots,h_{\ell_m}),\ \forall h\in\mathcal{M}(\mathrm{P}(M),G),
\]
for some function $\tilde{f}:G^m\to\C$ which is invariant by the diagonal action of $G$.
\end{lem}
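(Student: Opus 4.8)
The plan is to reduce gauge-invariance, which involves the full infinite-dimensional gauge group $\Gamma=\mathscr{C}^\infty(\Sigma,G)$, to a finite-dimensional statement about the action of $G^{\Vbb}$ on edge-holonomies, and then contract the spanning data down to loops based at the single vertex $v$. First I would observe that since the function $h\mapsto f(h_{\gamma_1},\ldots,h_{\gamma_n})$ depends only on the holonomies along the finitely many edges appearing in $\gamma_1,\ldots,\gamma_n$, one may assume without loss of generality that every $\gamma_i$ is a path in the submap $M'$ of $M$ generated by those edges; and multiplicativity identifies $\mathcal{M}(\mathrm{P}(M'),G)$ with $G^{E'}$, where $E'$ is a choice of one orientation for each edge of $M'$, via $h\mapsto(h_e)_{e\in E'}$. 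Under this identification the gauge action of $\Gamma$ factors through the evaluation map $\Gamma\to G^{\Vbb'}$, $j\mapsto(j(x))_{x\in\Vbb'}$, which is surjective (one can prescribe the values of a smooth $G$-valued function at finitely many points arbitrarily), so $f$ being gauge-invariant is \emph{equivalent} to the induced function on $G^{E'}$ being invariant under the residual action of $G^{\Vbb'}$ given by $g\cdot(h_e)_e=(g_{\overline e}^{-1}h_e g_{\underline e})_e$.

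Next I would pick a spanning tree $T$ of $M'$ (here connectedness of $M'$ should be arranged by adding, if necessary, edges of $M$ joining components, or one works component by component and notes the $\gamma_i$ all lie in the component of $v$ once the constant-on-other-components issue is handled). Using the $G^{\Vbb'}$-action one may gauge-fix the holonomy of every tree edge to $1$: concretely, root $T$ at $v$ and for each vertex $x$ let $t_x\in\mathrm{P}(M')$ be the unique geodesic-in-$T$ path from $v$ to $x$; the gauge transformation $g_x=h_{t_x}$ trivializes the tree. After this gauge-fixing, for each non-tree edge $e\in E'\setminus T$ the element $\ell_e:=t_{\underline e}\,e\,t_{\overline e}^{-1}$ is a loop in $\mathrm{L}_v(M')\subset\mathrm{L}_v(M)$, and $h_{\ell_e}=h_{t_{\underline e}}^{-1}h_e h_{t_{\overline e}}$ equals the gauge-fixed holonomy of $e$; these loops $\ell_e$ (indexed over the non-tree edges) are my $\ell_1,\ldots,\ell_m$. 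Every path $\gamma_i$ in $M'$ can be written, by inserting $t_x t_x^{-1}=1$ at the endpoints and at every intermediate vertex, as a product of the $\ell_e^{\pm1}$ together with a leading/trailing tree-correction; since $\gamma_i$ itself need not be a loop I would either note that the $\gamma_i$ relevant to a gauge-invariant $f$ can be grouped so the tree-corrections cancel, or more cleanly: express $h_{\gamma_i}=h_{t_{\overline\gamma_i}}\,\big(\text{word in the }h_{\ell_e}\big)\,h_{t_{\underline\gamma_i}}^{-1}$ and feed this into $f$; gauge-invariance of $f$ under the constant gauge transformation $j\equiv g$ forces the dependence on the $h_{t_x}$ to drop out up to an overall diagonal conjugation, yielding $f(h_{\gamma_1},\ldots,h_{\gamma_n})=\tilde f(h_{\ell_1},\ldots,h_{\ell_m})$ with $\tilde f$ invariant under the diagonal $G$-action $g\cdot(x_1,\ldots,x_m)=(gx_1g^{-1},\ldots,gx_mg^{-1})$.

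The bookkeeping in the last step — tracking how the non-loop paths $\gamma_i$ decompose over the generating loops and verifying that the only surviving symmetry is the global diagonal conjugation (the stabilizer of the full tree-trivialization inside $G^{\Vbb'}$ being exactly the diagonal $G$) — is the main obstacle, though it is essentially linear-algebra-over-$G$ combinatorics on the map. I expect the cleanest route is not to argue directly but simply to \emph{invoke} the cited statement: the lemma is stated in the excerpt as a particular instance of \cite[Lemma 2.1.5]{Lev2}, so after the reduction to $M'$ and the identification $\mathcal{M}(\mathrm{P}(M'),G)\cong G^{E'}$ one applies that reference verbatim. Thus the proof is: (i) restrict to the finite submap $M'$; (ii) identify multiplicative functions with $G^{E'}$ and the gauge action with the surjective image of $\Gamma$ in $G^{\Vbb'}$; (iii) choose a spanning tree, gauge-fix it, read off the loops $\ell_e$; (iv) cite \cite[Lemma 2.1.5]{Lev2} for the conclusion that gauge-invariance becomes diagonal $G$-invariance in these coordinates.
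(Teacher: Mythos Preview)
The paper does not give a proof of this lemma at all: it is stated as ``a particular instance of \cite[Lemma 2.1.5]{Lev2}'' and left at that. Your step (iv) --- simply invoking the cited reference --- is therefore exactly what the paper does, and the rest of your proposal is strictly more detailed than anything appearing here.

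That said, your sketch (i)--(iii) is the standard argument and is essentially how the cited lemma in L\'evy's monograph is proved, so you are on the right track. Two small clean-ups: first, a topological map on a connected surface in the sense defined here is automatically connected (the condition that the complement of the edges is a union of open discs forces this), so you need not worry about adding edges to join components. Second, the cleanest way to finish the ``bookkeeping'' is not to track how the $h_{t_x}$ cancel inside $f$, but to use full $G^{\Vbb'}$-invariance directly: given $h$, apply the gauge transformation $g_x=h_{t_x}$; invariance gives $f(h_{\gamma_1},\ldots,h_{\gamma_n})=f(h'_{\gamma_1},\ldots,h'_{\gamma_n})$ where $h'$ is the tree-trivialised configuration, in which $h'_{\gamma_i}$ is literally a word in the $h_{\ell_e}$; this defines $\tilde f$, and its diagonal $G$-invariance follows because the stabiliser of the trivialised tree inside $G^{\Vbb'}$ is exactly the diagonal copy of $G$.
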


Two sigma-fields may be put on $\mathcal{M}(\mathrm{P}(M),G)$:
\begin{itemize}
\item The smallest sigma-field $\mathcal{C}$ such that for any $\gamma\in\mathrm{P}(M)$ the evaluation function\footnote{It is also called \emph{holonomy} in the literature.}
\[
H_\gamma:\left\lbrace\begin{array}{ccc}
\mathcal{M}(\mathrm{P}(M),G) & \to & G\\
h & \mapsto & h_\gamma
\end{array}\right.
\]
is measurable: it is the \emph{cylindrical sigma-field}.
\item The smallest sigma-field $\mathcal{J}$ that makes 
\[
h\mapsto f(h_{\ell_1},\ldots,h_{\ell_n})
\]
measurable, for all $v\in\Vbb$, $n\in\N$, $\ell_1,\ldots,\ell_n\in\mathrm{L}_v(M)$ and $f:G^n\to\C$ $G$-invariant: it is the \emph{invariant sigma-field}.
\end{itemize}
We will mainly work with $\mathcal{C}$ but some results will only hold on $\mathcal{J}$, therefore we will specify which sigma-field we consider.

\subsubsection{Wilson loops}

If $\chi: G\to \C$ is a central function, for any $\ell\in \Ld(M)$, $\chi(h_{\ell})$ does not depend on the choice of based point. The function 
\[
\begin{array}{ccl}&\mathcal{M}(\mathrm{L}(M),G) &\lto \C \\ &h&\longmapsto \chi (h_\ell)\end{array}
\]
is then called a Wilson loop. 
When $G$ belongs to one of the four series given above, we will be interested in the Wilson loops 
\begin{equation}
W_\ell:\left\lbrace\begin{array}{ccl} \mathcal{M}(\mathrm{L}(M),G) & \lto & \C \\ h&\longmapsto & \tr (h_\ell),\end{array} \right. \label{------eq: Wilson loop}
\end{equation}
where $\ell$ is a loop of $M$, while for any $M\in  M_d(\C)$, $\tr(M)=\frac{1}{d}\sum_{i=1}^d M_{i,i}.$

\begin{rmk}[Gauge equivalence] For most compact Lie groups $G$, it can be shown that the family of Wilson loops separate points of $\mathcal{M}(\mathrm{P}(M),G)/\Gamma$ endowed with the quotient topology.  When $G$ belongs to one of the four series of the previous section, it can further be shown \cite{Sen5,Lev4} that it is enough to consider the family $\{W_\ell, \ell \in \Ld(M)\}.$ 
\end{rmk}

\subsubsection{Area-weighted maps}

\emph{An area-weighted map} is a topological map $M$ together with\footnote{In  the case of $\R^2$, the marked faces are excluded because their area is  considered to be  infinite. } a function $a: \Fbb \to \R_+^{*}.$ Two maps $(M,a)$ and $(M',a')$ are \emph{equivalent} if $M$ and $M'$ are equivalent as maps and the associated 
homomorphism $\phi$ of $\Sigma$ defines a bijection $\mfk F: \Fbb\to \Fbb'$ with 
$$a' \circ\mfk F= a. $$ 
For any $T>0$ we set 
$$\Delta_M(T)=\{a: \Fbb\to \R_+^{*}: \sum_{f\in \Fbb} a_f=T\}$$
when $M$ is a closed topological map, and when $(M,a)$ is an area-weighted map we write $|a|=\sum_{f\in \Fbb} a_f.$ If the surface is endowed with a Riemannian volume $\mathrm{vol}$, then it induces in particular an area function 
$\mathrm{vol}:\Fbb\to\R_+^{*}$.

\subsection{Two dimensional Yang--Mills measure}\label{sec:YM Intro}

In this section we recall a definition of the  Yang--Mills measure in three steps:
\begin{enumerate}
\item Given a topological map $M$, we define a uniform measure $\mathrm{U}_{M,C,G}$ on $\mathcal{M}(\mathrm{P}(M),G)$ with and without constraints.
\item We define the discrete Yang--Mills measure $\YM_{M,C,a,G}$ on an area-weighted topological map $(M,a)$ as an absolutely continuous measure with respect to $\mathrm{U}_{M,C,G}$.
\item We define the Yang--Mills holonomy field $(H_\ell)_{\ell\in\mathrm{P}(\Sigma)}$ on any compact, connected, orientable Riemann surface $\Sigma$ with volume form $\mathrm{vol}$, whose distribution $\YM_{\Sigma,C,G}$ is the continuous version of $\YM_{M,C,a,G}.$
\end{enumerate}

\subsubsection{Uniform measure on $\mathcal{M}(\mathrm{P}(M),G)$}

For any topological map $M$, the space $\mathcal{M}(\mathrm{P}(M),G)$ is a compact Lie group when endowed with the pointwise multiplication and has a unique Haar measure $\mu_M,$ on both sigma-fields $\mathcal{C}$ and $\mathcal{J}$.  Fixing a set $\Ebb^+$ of  positively oriented edges (so that for any $e\in\Ebb$, only $e$ or $e^{-1}$ belongs to $\Ebb^+$), $\mathcal{M}(\mathrm{P}(M),G)$ can  be identified with $G^{\Ebb^+} $ and $\mu_M$ is the push-forward of the direct product of Haar measures on $G$. Let us call it the \emph{unconstrained uniform measure} and denote it by $\mathrm{U}_{M,G}$. In contrast, we need to define a constrained version to put boundary conditions when necessary. If $\Sigma$ is a compact surface with boundary, denote by $\mathcal{B}(\Sigma)$ the set of connected components of $\partial\Sigma$, each taken twice, one for each orientation. There is a natural action of $\Z/2\Z$ on $\mathcal{B}(\Sigma)$ by orientation-reversal.

A \emph{set of boundary conditions} on a compact surface $\Sigma$ is a $\Z/2\Z$-equivariant map
\[
C:\mathcal{B}(\Sigma)\to G/\mathrm{Ad},
\]
where $G/\mathrm{Ad}$ is the set of conjugation classes of $G$ where $\Z/2\Z$ acts by inversion. The fact that boundary conditions take values in $G/\mathrm{Ad}$ rather than simply $G$ will be explained later. If $(M,a)$ is an area-weighted map on $\Sigma$, then the boundary conditions also apply to $M$, as according to Proposition \ref{Prop: Boundary},  elements of $\mathcal{B}(\Sigma)$ can be identified with loops $\Ld(M)$ up to re-rooting. Denoting by $L_1,\ldots,L_p$ the elements of $\mathcal{B}(\Sigma)$ oriented positively,  there is a bijection between $(G/\mathrm{Ad})^{\mathcal{B}(\Sigma)}/(\Z/2\Z)$ and $(G/\mathrm{Ad})^p$, so that any set $C$ of boundary conditions can be identified with a tuple $C=(t_1,\ldots,t_p)$ with $t_1,\ldots,t_p\in G/\mathrm{Ad}$.

For any $t\in G/\mathrm{Ad}$ and $n\geq 1$, the set $t(n)=\{(x_1,\ldots,x_n)\in G^n:x_1\cdots x_n\in t\}$ is a homogeneous space for the $G^n$-action
\[
(g_1,\ldots,g_n)\cdot(x_1,\ldots,x_n)=(g_1x_1g_2^{-1},\ldots,g_nx_ng_1^{-1}).
\]
We denote by $\delta_{t(n)}$ the extension to $G^n$ of the unique $G^n$-invariant probability measure on $t(n)\subset G^n$. It can be thought of as the \emph{conditional Haar measure on} $G^n$, under the condition $x_1\cdots x_n\in t$.

%Here are some of its properties.
%
%\begin{prop}[\cite{Lev2}, Lemma 2.3.4]\label{prop:desintegration}
%The measures $(\delta_{\mathscr{O}(n)})_{\mathscr{O}\in\mathrm{Conj}(G),n\in\N^*}$ satisfy the following properties:
%\begin{enumerate}
%\item For any $x\in G$, if we denote by $\mathscr{O}_x\in\mathrm{Conj}(G)$ its conjugation class,
%\begin{equation}
%\delta_{\mathscr{O}_x} = \int_G\delta_{gxg^{-1}}dg.
%\end{equation}
%\item For any continuous functions $f:G\to\C$, $g:G^{n-1}\to\C$ and $h:G^{n-1}\to G$,
%\begin{align}
%\begin{split}
%\int_G fd\delta_{\mathscr{O}}&\int_{G^{n-1}} g(x_1,\ldots,x_{n-1})dx_1\cdots dx_{n-1} \\
%=&\int_{G^n} f(h(x_1,\ldots,x_{n-1})x_1\cdots x_n h(x_1,\ldots,x_{n-1})^{-1}) \\
%& g(x_1,\ldots,x_{n-1})\delta_{\mathscr{O}(n)}(dx_1\cdots dx_n). \\
%\end{split}
%\end{align}
%\item For any continuous $f:G^n\to\C$,
%\begin{equation}
%\int_{G^{n+1}} f(x_1,\ldots,x_n)\delta_{\mathscr{O}(n+1)}(dx_1\cdots dx_{n+1})=\int_{G^n} fd\delta_{\mathscr{O}(n)},
%\end{equation}
%\begin{equation}
%\int_{G^n} f(x_2,\ldots,x_n,x_1)\delta_{\mathscr{O}(n)}(dx_1\cdots dx_n) = \int_{G^n}f d\delta_{\mathscr{O}(n)},
%\end{equation}
%\begin{equation}
%\int_G\left[\int_{G^n} fd\delta_{\mathscr{O}_y(n)}\right]dy = \int_{G^n} f(x_1,\ldots,x_n)dx_1\cdots dx_n.
%\end{equation}
%\end{enumerate}
%\end{prop}

%A drawback of this conditional measure is that it is not invariant by permutation of the edges anymore.
% for instance, whereas $dg_1\otimes dg_2=dg_2\otimes dg_1$, in general  $\delta_{t(2)}(dg_1dg_2)\neq\delta_{t(2)}(dg_2dg_1)$.
Let us now define a measure on $(\mathcal{M}(\mathrm{P}(M),G),\mathcal{C}).$  Therefor, let us choose a specific labelling of $\Ebb$. If we denote again by $L_1,\ldots,L_p$ the connected components of $\partial\Sigma$, we choose the labels of the oriented edges $e\in\Ebb^+$ such that $L_i=e_{i,1}\cdots e_{i,n_i}$ for every $i$. Denote finally by $e_1,\ldots,e_m$ the remaining edges of $\Ebb^+$.  Then $\mathcal{M}(\mathrm{P}(M),G)$ is isomorphic to 
$G^{m} \times G^{n_1}\times \cdots \times G^{n_p}$ where   $h\in \mathcal{M}(\mathrm{P}(M),G)$ is mapped to the tuple $g_l=h_{e_l},1\le l\le m, g_{i,n_k}=h_{e_{i,n_k}},1\le i\le n_i, 1\le k\le p.$    The \emph{uniform measure on} $\mathcal{M}(\mathrm{P}(M),G)$ \emph{with boundary conditions} $C=(t_1,\ldots,t_p)$ is the measure $\mathrm{U}_{M,C,G}$ on $(\mathcal{M}(\mathrm{P}(M),G),\mathcal{C})$ defined by
\[
\mathrm{U}_{M,(t_1,\ldots,t_p),G}(dh)=dg_1\otimes\cdots\otimes dg_m\otimes\bigotimes_{i=1}^p\delta_{t_i(n_i)}(dg_{i,1}\cdots dg_{i,n_i}).
\]
By convention, when $C=\emptyset,$ we write $\mathrm{U}_{M,C,G}=\mu_M.$ The translation invariance of the Haar measure and the choice $G/\mathrm{Ad}$-valued boundary conditions yield the following invariance proved given in \cite[Prop. 2.3.6.]{Lev2}.

\begin{prop} The measure $\mathrm{U}_{M,(t_1,\ldots,t_p),G}$  is left invariant by the action of $\Gamma$ on $\mathcal{M}(\mathrm{P}(M),G)$. 
\end{prop}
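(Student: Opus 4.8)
The plan is to verify directly that the measure $\mathrm{U}_{M,(t_1,\ldots,t_p),G}$ is invariant under the $\Gamma$-action, using the concrete description of $\mathcal{M}(\mathrm{P}(M),G)$ as a product of copies of $G$ indexed by $\Ebb^+$, together with the observation that the gauge action on holonomies is, after this identification, a product of left and right translations. First I would reduce to a combinatorial statement: since $\Gamma=\mathscr{C}^\infty(\Sigma,G)$ acts on $\mathcal{M}(\mathrm{P}(M),G)$ through its restriction to the vertex set $\Vbb$ of $M$ (the gauge transformation $(j\cdot h)_\gamma = j(\overline\gamma)^{-1}h_\gamma j(\underline\gamma)$ only sees $j$ at endpoints of edges), it suffices to show invariance under the action of $G^{\Vbb}$ by $(g\cdot h)_e = g_{\overline e}^{-1}\, h_e\, g_{\underline e}$ for each oriented edge $e$.

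Next I would split the edges into the two families appearing in the definition: the ``free'' edges $e_1,\ldots,e_m$, whose holonomies carry the product Haar measure $dg_1\otimes\cdots\otimes dg_m$, and the boundary edges $e_{i,1},\ldots,e_{i,n_i}$ composing the boundary loop $L_i$, whose holonomies carry $\delta_{t_i(n_i)}$. For a free edge $e_l$ with source $u$ and target $v$, the variable $g_l=h_{e_l}$ is sent to $g_v^{-1}g_l g_u$; since the Haar measure on $G$ is bi-invariant, this transformation preserves $dg_l$ fiberwise, hence preserves the full product over free edges (one should be slightly careful that a free edge may have an endpoint shared with a boundary loop or with another free edge, but because the action on each $g_l$ is a left-and-right translation by the \emph{fixed} group elements $g_u,g_v$, Fubini lets us integrate out $g_l$ first for each free edge independently). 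For the boundary edges composing $L_i$, note that the gauge action permutes the factors $(g_{i,1},\ldots,g_{i,n_i})$ exactly by the $G^{n_i}$-action $(g_1,\ldots,g_{n_i})\cdot(x_1,\ldots,x_{n_i})=(g_1x_1g_2^{-1},\ldots,g_{n_i}x_{n_i}g_1^{-1})$ written down just before the statement — here the relevant group elements are the values of $j$ at the vertices along $L_i$ — and $\delta_{t_i(n_i)}$ was defined precisely as the unique $G^{n_i}$-invariant probability measure for this action. Hence each boundary block is preserved as well.

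The one genuine subtlety — and the step I expect to be the main obstacle to write cleanly — is the bookkeeping of shared vertices: a single vertex $v$ of $M$ may be an endpoint of several free edges and/or lie on one or more boundary loops, so the gauge parameter $g_v$ appears simultaneously in the transformation of many coordinates. The resolution is that the $\Z/2\Z$-equivariance built into boundary conditions ($C$ taking values in $G/\mathrm{Ad}$, with inversion under orientation reversal) together with the $G^{n_i}$-homogeneity of $t_i(n_i)$ guarantees the boundary blocks are stable under the full diagonal reparametrisation, so no compatibility condition linking the blocks is needed; and on the free edges the translations by $g_v$ are absorbed fiberwise before any other variable is touched. Concretely I would organise the argument as: (i) fix $j\in\Gamma$, set $g_v=j(v)$ for $v\in\Vbb$; (ii) integrate a test function $\int F(j\cdot h)\,\mathrm{U}_{M,C,G}(dh)$, expand in the coordinates $(g_1,\ldots,g_m,(g_{i,k}))$; (iii) for each free edge, change variables $g_l\mapsto g_{\overline{e_l}}\,g_l\,g_{\underline{e_l}}^{-1}$ and invoke bi-invariance of Haar measure; (iv) for each boundary block, invoke the $G^{n_i}$-invariance of $\delta_{t_i(n_i)}$ with the appropriate tuple of vertex values; (v) conclude $\int F(j\cdot h)\,\mathrm{U}_{M,C,G}(dh)=\int F(h)\,\mathrm{U}_{M,C,G}(dh)$. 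Since the holonomy along $L_i$ is conjugated, not fixed, by the gauge transformation, it is exactly the $G/\mathrm{Ad}$-valued nature of $t_i$ that makes the constraint $g_{i,1}\cdots g_{i,n_i}\in t_i$ gauge-stable — this is the point the remark ``The fact that boundary conditions take values in $G/\mathrm{Ad}$ rather than simply $G$ will be explained later'' is alluding to, and it is worth emphasising in the proof.
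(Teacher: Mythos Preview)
The paper does not give its own proof of this proposition; it simply remarks that the invariance follows from the translation invariance of Haar measure together with the $G/\mathrm{Ad}$-valued nature of the boundary conditions, and refers to \cite[Prop.~2.3.6]{Lev2} for the details. Your proposal fleshes out exactly this sketch, and the structure is correct: free-edge coordinates absorb the gauge translations by bi-invariance of Haar measure, while each boundary block is handled by the built-in invariance of $\delta_{t_i(n_i)}$.

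One bookkeeping point deserves care, however. You assert that the gauge action on the boundary tuple $(g_{i,1},\ldots,g_{i,n_i})$ coincides ``exactly'' with the $G^{n_i}$-action $x_k\mapsto g_k x_k g_{k+1}^{-1}$ written in the paper. With the paper's conventions --- the anti-homomorphism rule $h_{\gamma_1\gamma_2}=h_{\gamma_2}h_{\gamma_1}$ and the ordering $L_i=e_{i,1}\cdots e_{i,n_i}$ --- the gauge transformation actually sends $g_{i,k}=h_{e_{i,k}}$ to $j(v_{k+1})^{-1}\,g_{i,k}\,j(v_k)$, which is the \emph{opposite} cyclic action. That action preserves the conjugacy class of $g_{i,n_i}\cdots g_{i,1}=h_{L_i}$, whereas the $G^{n_i}$-action as written preserves the class of $g_{i,1}\cdots g_{i,n_i}$; in particular the gauge action does not even stabilise the set $t_i(n_i)$ as defined. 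This is a harmless ordering inconsistency already present in the paper's notation, and the fix is immediate (relabel the boundary edges in reverse, or note that coordinate reversal $(x_1,\ldots,x_n)\mapsto(x_n,\ldots,x_1)$ intertwines the two transitive $G^n$-actions and hence their unique invariant probability measures), but in step~(iv) of your plan you should make this adjustment explicit rather than claim a literal match.
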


We shall also consider constrained measures. Assume that $M$ is closed,   $\ell\in \Ld(M)$ is a simple loop and $t\in G/\mathrm{Ad}$. Denoting by $\ell= e_{0,1}\ldots e_{0,n}$ the edge decomposition of $\ell$ in $M,$  and labeling $e_1,\ldots,e_m$ the other edges of $M,$  the \emph{uniform measure on} $\mathcal{M}(\mathrm{P}(M),G)$ with \emph{constraint} $t$ on $\ell$ is 
the measure $\mathrm{U}_{M,C_{\ell\mapsto t},G}$ on $(\mathcal{M}(\mathrm{P}(M),G),\mathcal{C})$ defined by
\[
\mathrm{U}_{M,C_{\ell\mapsto t},G}(dh)=dg_1\otimes\cdots\otimes dg_m\otimes \delta_{t(n)}(dg_{0,1}\cdots dg_{0,n}).
\]

%  A gauge transformation $j\in\Gamma$ acts on the boundary conditions by
%\[
%j\cdot (t_1,\ldots,t_p)=(j(\overline{\ell})^{-1}t_1j(\underline{\ell}),\ldots,j(\overline{\ell})^{-1}t_1j(\underline{\ell})).
%\]
%Note that any $\ell\in\mathcal{B}(\Sigma)$ is a loop, thus its source and target are identical, so that the action above is an action by conjugation. It induces an action of $j\in\Gamma$ on $\mathrm{U}_{M,G,C}$ by
%\[
%j\cdot \mathrm{U}_{M,(t_1,\ldots,t_p),G}  = \mathrm{U}_{M,j\cdot(t_1,\ldots,t_p),G},
%\]
%and it appears that the measure $\mathrm{U}_{M,G,C}$ is gauge-invariant, for any topological map $M$ and set of boundary conditions $C$, by construction of the measures $\delta_{t_i(n_i)}$.

\subsubsection{Discrete Yang--Mills measure}

We follow here \S 1.5.2 of \cite{Lev3}.  Consider a compact, connected, orientable surface $\Sigma$ with genus $g\geq 0$, $p$ boundary components, and an area-weighted map $(M,a)$ with graph $\cG$ embedded in $\Sigma$. For each face $f\in\Fbb$, its boundary $\pl f$ is a loop of $\cG$ given by the concatenation of edges bordering $f$ considered up to the choice of base point and of orientation.   
%The \emph{Yang--Mills measure}\footnote{When the boundary of $\Sigma$ is non-empty, we  also call it Yang--Mills measure with free boundary conditions.} on $(M,a)$ with structure group $G$ is the probability measure $\YM_{M,a,G}$ on $(\mathcal{M}(\mathrm{P}(M),G),\mathcal{C})$ with density
%\begin{equation}
%\frac{1}{Z_{M,a,G}}\prod_{f\in \Fbb} p_{a_f}(h_{\pl f}) \label{------eq:DS form}
%\end{equation}
%with respect to $\mathrm{U}_{M,C,G}$, where $Z_{M,a,G}>0$ is a positive constant. 
Set $C=(t_1,\ldots,t_p)\in (G/\mathrm{Ad})^p$, when $p\ge 1,$ and $C=\emptyset$ when $p=0.$ The \emph{Yang--Mills measure} on $(M,a)$ with structure group $G$ and boundary conditions
%\footnote{The measure is also called the \emph{conditional Yang--Mills measure}: in fact, conditions can be put on any set of disjoint simple loops in the map, not only the ones representing the boundary of the underlying surface. For more details, see for instance \S 1.5.2 of \cite{Lev3}.}
 $C$ is the probability measure $\YM_{M,C,a,G}$ on $(\mathcal{M}(\mathrm{P}(M),G),\mathcal{C})$ with density\footnote{This measure seems to have  been introduced first in the physics literature in \cite{Mig}, see also \cite{MenO} and \cite{Wit} where it was used to 
compute symplectic volumes of flat connexions. Statistical physics models with heat kernel weight in lattice gauge 
theories, or continuous spin systems, also bear the name of \emph{Villain model},  see for instance \cite{OstSeil}. The term "discrete Yang-Mills measure" might have been introduced and formally defined in \cite[Sect.  2]{Sen6}. }
\begin{equation}
\frac{1}{Z_{M,a,G}(C)}\prod_{f\in \Fbb} p_{a_f}(h_{\pl f}) \label{------eq:DS form}
\end{equation}
with respect to $\mathrm{U}_{M,C,G}$, where $Z_{M,a,G}(C)>0$ is a constant, equal to $Z_{M,a,G}$ if $p=0,$ and defining a function of $t_1,\ldots t_p$ when $p\ge 1.$ Since the heat kernel defines  a functions on $G$ which are central and invariant by 
inversion, 
each term of 
the above product depends neither on the base point, nor on the orientation for the  boundary of the faces, and defines therefore a well defined Wilson loop.  Using the semigroup property leads to the following 
elementary but remarkable lemma.

\begin{lem}[\cite{Sen0,Lev2}] The constant 
\[
Z_{M,a,G}(t_1,\ldots,t_p)= \int_{\mathcal{M}(\mathrm{P}(M),G)} \prod_{f\in \Fbb} p_{a_f}(h_{\pl f})\mathrm{U}_{M,(t_1,\ldots,t_p),G}(dh)
\]
and \[Z_{M,a,G}=\int_{\mathcal{M}(\mathrm{P}(M),G)} \prod_{f\in \Fbb} p_{a_f}(h_{\pl f})\mathrm{U}_{M,G}(dh)\]
depend only on the genus $g$ of $\Sigma$, the total area $T=\sum_{a\in \Fbb}a_f$ and in the first case on the boundary conditions $C=(t_1,\ldots,t_p)$ up to reordering.  When $M$ is not closed, $Z_{M,a,G}=1.$  \end{lem}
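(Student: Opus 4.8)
The plan is to show three things: (i) the integrals defining $Z_{M,a,G}(t_1,\dots,t_p)$ and $Z_{M,a,G}$ do not depend on the choice of topological map $M$ realising a given genus $g$ and given boundary data, but only on $g$, the total area $T=\sum_f a_f$, and (up to reordering) the boundary conditions; (ii) when $M$ is not closed one has $Z_{M,a,G}=1$; and (iii) that the two displayed expressions are consistent. The main tool is the semigroup and centrality/inversion-invariance of the heat kernel $p_t$, exactly as flagged before the statement, together with two reductions: a \emph{face-merging} move (replace a common edge of two faces by deleting it, passing from $p_{a_f}\cdot p_{a_{f'}}$ to $p_{a_f+a_{f'}}$ after integrating out that edge variable) and an \emph{edge-subdivision / vertex-addition} move (add a bivalent vertex or split an edge), which change $M$ without changing the integral because of the convolution identity $p_s * p_t = p_{s+t}$ and the fact that integrating the Haar variable of a newly created edge against a heat kernel reproduces a heat kernel on the concatenated loop. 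Since any two rectifiable topological maps on surfaces of the same genus with the same boundary components are related by a finite sequence of such moves (this is the standard statement that any two cellular decompositions of a surface have a common refinement, cf.\ the references to \cite{Lev2}), invariance of $Z$ follows.

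First I would set up the computation on a fixed $M$. Choose $\Ebb^+$ and the labelling compatible with the boundary loops as in the definition of $\mathrm{U}_{M,C,G}$, so that $Z_{M,a,G}(C)$ is an integral over $G^m\times\prod_i G^{n_i}$ of $\prod_{f\in\Fbb} p_{a_f}(h_{\partial f})$ against $dg_1\cdots dg_m \otimes \bigotimes_i \delta_{t_i(n_i)}$. The key elementary observation is: if $e$ is an edge of $\Ebb^+$ that is \emph{not} part of any boundary loop and that borders two \emph{distinct} faces $f, f'$, then $h_{\partial f}$ and $h_{\partial f'}$ each depend on $h_e$ in the form $h_{\partial f} = x\, h_e$ (after choosing base points suitably) and $h_{\partial f'} = h_e^{-1} y$ for words $x,y$ in the other edge variables; integrating $h_e$ over $G$ against $p_{a_f}(x h_e)\,p_{a_{f'}}(h_e^{-1} y) = p_{a_f}(x h_e)\,p_{a_{f'}}(y^{-1} h_e)$, using inversion-invariance and then the convolution/semigroup identity, yields $p_{a_f+a_{f'}}(x y)$, which is precisely $p_{a_{f''}}(h_{\partial f''})$ for the merged face $f''$ of the map $M/e$ with area $a_f+a_{f'}$. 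This is the inductive step; iterating reduces any closed map to a one-face map (a $4g$-gon with the standard identifications, possibly after also using the dual subdivision moves to handle edges bordering a single face, i.e.\ loops, which are treated by the genus-reduction identity $\int_G p_t(aba^{-1}b^{-1}\cdots) = \cdots$ using bi-invariance), and shows the result depends only on $g$, $T$, and $C$.

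The step I expect to be the main obstacle is making rigorous the claim that \emph{any} two topological maps of the same genus with the same boundary are connected by these elementary moves, and in particular handling edges that border the same face on both sides (these do not merge two faces but instead carry the topology), as well as making sure the boundary-constrained edges $e_{i,j}$ are never integrated out, so that the boundary conditions persist through the reduction. This is where I would lean on Proposition \ref{Prop: Boundary} and the cited results of \cite{Lev2,Sen0}: the moves preserve the class of maps with the prescribed boundary loops, and the reduction can always be arranged to contract only non-boundary edges first. For the case $p\geq 1$ with $M$ not closed, the reduction collapses the surface to a disc-with-boundary configuration in which, after removing all internal faces, one is left with an integral of $1$ (no heat-kernel factors remain, since with $p\ge 1$ the fundamental-domain reduction leaves no bounded face) against the probability measure $\mathrm{U}_{M,C,G}$, giving $Z_{M,a,G}=1$; the total mass being $1$ is exactly because $\mathrm{U}_{M,C,G}$ is a probability measure and $\delta_{t(n)}$ is a probability measure on each boundary factor. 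Finally, the consistency of the two displayed formulas is the special case $p=0$, $C=\emptyset$, $\mathrm{U}_{M,C,G}=\mu_M$.
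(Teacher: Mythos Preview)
Your overall strategy is the standard one (and the one implicit in the paper, which does not actually prove this lemma but cites \cite{Sen0,Lev2} after noting that ``using the semigroup property leads to'' it): use centrality, inversion-invariance and the convolution identity $p_s*p_t=p_{s+t}$ to show that contracting an interior edge bounding two distinct faces merges those faces and adds their areas, and combine this with the combinatorial fact that any two maps on the same $(g,p)$-surface are connected by such moves and their inverses. That part of your plan is sound, and you correctly flag the delicate point (edges bounding the same face on both sides) and the need to keep boundary edges untouched throughout the reduction.

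There is, however, a genuine confusion in your argument for $Z_{M,a,G}=1$ when $M$ is not closed. It is not true that ``with $p\ge 1$ the fundamental-domain reduction leaves no bounded face'' or that ``no heat-kernel factors remain'': a map on a surface with boundary still has faces, and after the reduction to a single face you are left with exactly one heat-kernel factor $p_T(h_{\partial f})$. (You may be thinking of the planar convention in this paper, where the marked face $f_\infty$ is excluded from $\Fbb$; that convention does not apply to compact surfaces with boundary.) Also note that the claim concerns the \emph{unconstrained} $Z_{M,a,G}$, computed against $\mathrm{U}_{M,G}=\mu_M$, not against $\mathrm{U}_{M,C,G}$ as you write. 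The correct reason is this: after reducing to one face, the boundary word $h_{\partial f}$ contains at least one edge variable $c$ (coming from a boundary edge) that occurs exactly once; since in the unconstrained measure $c$ is integrated with Haar measure, translation invariance gives
\[
\int_G p_T\big(w_1\,c\,w_2\big)\,dc=\int_G p_T(c')\,dc'=1,
\]
and the remaining integral of the constant $1$ against a product of Haar probability measures is $1$. This is the step you should replace your ``no faces remain'' claim with.
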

This constant is called the \emph{partition function}. We  denote it by $Z_{(g,p),T,G}(t_1,\ldots,t_p),$ or $Z_{g,T,G}$ when $p=0,$ and  we drop the subscript $G$ if there is no ambiguity on the group. When $h\in G^p$, we  also write $Z_{(g,p),T}(h_1,\ldots,h_p)$ for $Z_{(g,p),T}([h_1],\ldots,[h_p])$ where $[g]\in G/\mathrm{Ad}$ denotes the conjugacy class of an element $g\in G.$ We  give  explicit expressions using representation theory in section \ref{sec:Pf Charact}, and an example of discrete Yang--Mills measure in section \ref{sec:example}. %This explicit expression will also help us understand the behaviour of partition functions with boundary conditions under surgery, see Prop. \ref{prop:gluing1} and \ref{prop:gluing2}.

When $p\ge 1,$ the Yang--Mills measure with or without boundary conditions  are related by the disintegration formula
{\begin{equation}
 Z_{M,a,G}\YM_{M,a,G}= \int_{(G/\mathrm{Ad})^p}  Z_{M,a,G}(t_1,\ldots,t_p)\YM_{M,(t_1,\ldots,t_p),a} dt_1\ldots dt_p.\label{-----eq: Disint YM BD}
\end{equation}}

When  $(M',a')$ is another map with genus $g$ and volume $T,$ we say that $(M',a')$ is finer than 
$(M,a)$ if any face $f$ of $M$ is a disjoint union of faces $f_1,\ldots f_k$ of $M'$ with $a_f=a'_{f_1}+\cdots +a'_{f_k}.$ In this case, the restriction of $h\in\mathcal{M}(\mathrm{P}(M'),G)$ to the edges of $M$ defines an element $\mathscr{R}_M^{M'}(h)$ of $\mathcal{M}(\mathrm{P}(M),G)$. We have the following lemma.

\begin{lem}[\cite{Sen0,Lev2}] \label{_____Lem: Compat}When  $C$ is a set of boundary conditions on $\Sigma$ and $(M',a')$ is finer than $(M,a)$, then
$$(\mathscr{R}^{M'}_{M})_* (\YM_{M',C,a',G})= \YM_{M,C,a,G}.$$
In particular if $\ell\in \mathrm{L}(M)$, the random variable $W_\ell$ has same law under $\YM_{M',C,a',G}$ and $\YM_{M,C,a,G}.$
\end{lem}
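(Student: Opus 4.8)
The plan is to deduce the statement from the defining density formula \eqref{------eq:DS form} together with the semigroup property of the heat kernel and the invariance of the Haar measure. First I would reduce to the case where $(M',a')$ is obtained from $(M,a)$ by a single elementary subdivision, i.e.\ replacing one face $f$ of $M$ by two faces $f_1,f_2$ glued along a new edge $e$ (with $a_f=a'_{f_1}+a'_{f_2}$), possibly also adding a new vertex in the interior of an existing edge; the general case follows by iterating, since any $(M',a')$ finer than $(M,a)$ is reached from $M$ by finitely many such moves (this is standard for maps on surfaces, see \cite{Lev2}). So the core is a local computation.

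For the elementary move, I would fix a set $\Ebb'^+$ of positively oriented edges of $M'$ adapted to the decomposition, so that $\mathcal{M}(\mathrm{P}(M'),G)$ is identified with $G^{\Ebb'^+}$ with the new edge $e$ as one of the coordinates, and $\mathscr{R}^{M'}_M$ is literally the projection forgetting the $e$-coordinate (here one uses that the remaining edges of $M'$ are the edges of $M$ possibly split at a new vertex, and that splitting an edge $\gamma=\gamma_1\gamma_2$ and integrating out the extra Haar variable recovers the original law by right-invariance). Writing $h_{\partial f} = h_{\partial f_1}\, h_{\partial f_2}$ up to conjugation and base-point choice when one reads the two new face-boundaries consistently around $e$, the product over faces in \eqref{------eq:DS form} becomes, after integrating the $e$-variable against Haar measure,
\[
\int_G p_{a'_{f_1}}(u\,h_{\partial f_1})\,p_{a'_{f_2}}(h_{\partial f_2}\,u^{-1})\,du = (p_{a'_{f_1}} * p_{a'_{f_2}})(h_{\partial f}) = p_{a_f}(h_{\partial f}),
\]
using centrality, invariance under inversion, and $p_s * p_t = p_{s+t}$. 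The other factors $p_{a_{f'}}(h_{\partial f'})$, $f'\neq f$, and the boundary-condition factors $\delta_{t_i(n_i)}$ are unaffected by pushing forward along $\mathscr{R}^{M'}_M$ (the boundary loops $L_i$ live in edges of $M$, which survive in $M'$). Hence $(\mathscr{R}^{M'}_M)_*\big(\prod_{f}p_{a'_f}(h_{\partial f})\,\mathrm{U}_{M',C,G}\big) = \prod_f p_{a_f}(h_{\partial f})\,\mathrm{U}_{M,C,G}$ as (unnormalized) measures; taking total mass gives $Z_{M',a',G}(C)=Z_{M,a,G}(C)$, and after normalization the pushforward identity for the probability measures follows. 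The last sentence is then immediate: if $\ell\in\mathrm{L}(M)\subset\mathrm{L}(M')$, then $W_\ell = \tr(h_\ell)$ is a function of $\mathscr{R}^{M'}_M(h)$ (since $\ell$ uses only edges surviving in $M$), so its law is the same under $\YM_{M',C,a',G}$ and its pushforward $\YM_{M,C,a,G}$.

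The main obstacle I anticipate is purely bookkeeping rather than conceptual: carefully justifying that an arbitrary refinement is generated by the single-edge-addition / vertex-addition moves \emph{while keeping the areas additive}, and tracking orientations and base points so that the two new face-boundaries genuinely multiply to the old one around the inserted edge (the conjugation ambiguity is harmless because $p_t$ is central, but one must say so). One also has to handle the case where the new edge $e$ is incident to the infinite face when $\Sigma=\R^2$ or a disc — but then $e$ borders $f_\infty$, which carries no heat-kernel factor, and integrating out $u$ against Haar simply deletes the variable, so the argument degenerates correctly. None of this is deep, which is why the lemma is quoted from \cite{Sen0,Lev2} rather than reproved here.
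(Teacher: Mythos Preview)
Your proposal is correct and follows the standard argument. Note, however, that the paper does not give its own proof of this lemma at all: it is stated with the attribution \cite{Sen0,Lev2} and used as a black box, so there is nothing to compare against beyond observing that your sketch is precisely the classical proof found in those references (reduction to an elementary subdivision, then integrating out the new edge variable via the convolution identity $p_s*p_t=p_{s+t}$ and centrality).
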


We further need to  define  the Yang--Mills measures with  a constraint along a single  loop, together with a disintegration formula. Assume that  $M$ is closed,   $\ell\in \Ld(M)$ is a simple loop and $t\in G/\mathrm{Ad}$. Denoting by $\ell= e_{0,1}\ldots e_{0,n}$ the edge decomposition of $\ell$ in $M,$  and labeling $e_1,\ldots,e_m$ the other edges of $M,$  the \emph{Yang--Mills  on} $\mathcal{M}(\mathrm{P}(M),G)$ with structure group $G$ and  \emph{constraint} $C_{\ell\mapsto t}$ is the  probability measure $\YM_{M,C_{\ell\mapsto t},a,G}$ on $(\mathcal{M}(\mathrm{P}(M),G),\mathcal{C})$ with density
%
%\footnote{This measure appeared in the physics literature in \cite{Mig}, see also \cite{MenO} and \cite{Wit} where it was used to 
%compute symplectic volumes of flat connexions. Weighting with the heat kernel in gauge 
%theories  
%or continuous spin systems also bears the name of \emph{Villain action},  see for instance \cite{OstSeil}.  }
\begin{equation}
\frac{1}{Z_{M,a,G}(\ell; t)}\prod_{f\in \Fbb} p_{a_f}(h_{\pl f}) \label{------eq:DS form}
\end{equation}
with respect to $\mathrm{U}_{M,C_{\ell\mapsto t},G}$, where $Z_{M,a,G}(\ell;t)>0$ is a positive constant.

%\begin{prop}[\cite{Lev2,Lev3}]
%Let $(M,a)$ be an area-weighted map embedded in a compact surface $\Sigma$ with genus $g\geq 0$. Let $\ell$ be a loop in $M$, and denote by $\YM_{M,\ell,t,a,G}$ the Yang--Mills measure on $(M,a)$ with the condition that $[H_\ell]=t\in G/\mathrm{Ad}$. We have
%\begin{equation}\label{eq:decondition}
%Z_{M,a,G}\YM_{M,a,G}=\int_{G/\mathrm{Ad}} \YM_{M,\ell, t,a,G}Z_{M,\ell,t,a,G}dt.
%\end{equation}
%\end{prop}

\begin{prop}[\cite{Lev2,Lev3}]
Let $(M,a)$ be a closed, area-weighted map, and let  $\ell\in \Ld(M)$ be a simple loop. Then for any $t\in G/\mathrm{Ad},a\in \Delta_M(T),$ 
\begin{equation}\label{eq:decondition}
Z_{M,a,G}\YM_{M,a,G}=\int_{G/\mathrm{Ad}}Z_{M,a,G}(\ell;t) \YM_{M,C_{\ell\mapsto t},a,G}dt.
\end{equation}
\end{prop}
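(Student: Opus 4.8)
The plan is to factor out the heat-kernel density and reduce \eqref{eq:decondition} to a disintegration of the \emph{unconstrained} uniform measure $\mathrm{U}_{M,G}$ along the conjugacy class of the holonomy of $\ell$. Since $M$ is closed and $\ell$ is simple, its edges $e_{0,1},\dots,e_{0,n}$ are pairwise distinct, and together with the remaining positively oriented edges $e_1,\dots,e_m\in\Ebb^+$ they exhaust $\Ebb^+$; hence the evaluation map $h\mapsto\bigl((h_{e_j})_{1\le j\le m},(h_{e_{0,i}})_{1\le i\le n}\bigr)$ identifies $\cM(\mathrm{P}(M),G)$ with $G^m\times G^n$, and under this identification $\mathrm{U}_{M,G}=\bigl(\bigotimes_{j=1}^m dg_j\bigr)\otimes\bigl(\bigotimes_{i=1}^n dg_{0,i}\bigr)$ and $\mathrm{U}_{M,C_{\ell\mapsto t},G}=\bigl(\bigotimes_{j=1}^m dg_j\bigr)\otimes\delta_{t(n)}(dg_{0,1}\cdots dg_{0,n})$, directly from the definitions recalled in Section~\ref{sec:YM Intro}. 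So it suffices to establish, as an identity of probability measures on $G^n$,
\begin{equation}
\bigotimes_{i=1}^n dg_{0,i}\;=\;\int_{G/\mathrm{Ad}}\delta_{t(n)}\bigl(dg_{0,1}\cdots dg_{0,n}\bigr)\,dt,\label{eq:key-disint}
\end{equation}
where $dt$ is the push-forward of the Haar probability of $G$ under $q\colon G\to G/\mathrm{Ad}$; tensoring \eqref{eq:key-disint} with $\bigotimes_j dg_j$ then gives $\mathrm{U}_{M,G}=\int_{G/\mathrm{Ad}}\mathrm{U}_{M,C_{\ell\mapsto t},G}\,dt$ on $(\cM(\mathrm{P}(M),G),\cC)$.

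To prove \eqref{eq:key-disint}, consider $\Phi\colon G^n\to G/\mathrm{Ad}$, $(x_1,\dots,x_n)\mapsto[x_1\cdots x_n]$, and the $G^n$-action $(g_1,\dots,g_n)\cdot(x_1,\dots,x_n)=(g_1x_1g_2^{-1},\dots,g_nx_ng_1^{-1})$ introduced in Section~\ref{sec:YM Intro}. Both sides of \eqref{eq:key-disint} are $G^n$-invariant probability measures on $G^n$: the left-hand side because, for fixed $(g_1,\dots,g_n)$, the action operates coordinatewise by bi-translations, each of which preserves the Haar measure of $G$; the right-hand side because each $\delta_{t(n)}$ is $G^n$-invariant by construction. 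Both also push forward to $dt$ under $\Phi$: on the left because $(x_1,\dots,x_n)\mapsto x_1\cdots x_n$ sends $\bigotimes_i dg_{0,i}$ to the Haar measure of $G$ (freeze all coordinates but one and use translation invariance), which $q$ then sends to $dt$; on the right trivially, since $\delta_{t(n)}$ is carried by $t(n)=\Phi^{-1}(t)$. Finally $\Phi$ is itself $G^n$-invariant — the product $x_1\cdots x_n$ is merely conjugated by $g_1$ — so any $G^n$-invariant probability measure $\mu$ on $G^n$ with $\Phi_*\mu=dt$ disintegrates along $\Phi$ with conditional measures that are $G^n$-invariant probabilities on the fibers $t(n)$ for $dt$-a.e.\ $t$; since $G^n$ acts transitively on each $t(n)$, such a conditional is necessarily $\delta_{t(n)}$, whence $\mu=\int_{G/\mathrm{Ad}}\delta_{t(n)}\,dt$. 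Applying this to both sides of \eqref{eq:key-disint} proves it. (No interpretation of $t$ as "the class of $h_\ell$" is needed here; only the definition of $\mathrm{U}_{M,C_{\ell\mapsto t},G}$ recalled above is used, which also disposes of any rerooting/orientation convention implicit in $C_{\ell\mapsto t}$.)

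It remains to reinstate the heat-kernel density. Set $\rho(h)=\prod_{f\in\Fbb}p_{a_f}(h_{\pl f})$; this is a bounded, strictly positive, measurable function on $\cM(\mathrm{P}(M),G)$ (each $p_{a_f}$ is a smooth, everywhere-positive function on the compact connected group $G$, being the transition density of a Brownian motion). Multiplying the identity $\mathrm{U}_{M,G}=\int_{G/\mathrm{Ad}}\mathrm{U}_{M,C_{\ell\mapsto t},G}\,dt$ by $\rho$ and using Tonelli's theorem (all integrands are nonnegative) yields $\rho\cdot\mathrm{U}_{M,G}=\int_{G/\mathrm{Ad}}\rho\cdot\mathrm{U}_{M,C_{\ell\mapsto t},G}\,dt$. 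By definition the left-hand side equals $Z_{M,a,G}\,\YM_{M,a,G}$, while for each $t$ the measure $\rho\cdot\mathrm{U}_{M,C_{\ell\mapsto t},G}$ has total mass $Z_{M,a,G}(\ell;t)>0$ and normalises to $\YM_{M,C_{\ell\mapsto t},a,G}$, i.e.\ $\rho\cdot\mathrm{U}_{M,C_{\ell\mapsto t},G}=Z_{M,a,G}(\ell;t)\,\YM_{M,C_{\ell\mapsto t},a,G}$. This is exactly \eqref{eq:decondition}; equating total masses also recovers $Z_{M,a,G}=\int_{G/\mathrm{Ad}}Z_{M,a,G}(\ell;t)\,dt$.

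The only genuinely non-formal step is the identification of the conditional law in \eqref{eq:key-disint} with $\delta_{t(n)}$, that is, matching the fibration of $G^n$ over $G/\mathrm{Ad}$ induced by $\Phi$ with the homogeneous-space structure of the level sets $t(n)$ and invoking uniqueness of invariant probability measures; everything else is unwinding definitions, the push-forward of Haar measure under multiplication, and Tonelli. This argument also runs parallel to the proof of the boundary-condition disintegration \eqref{-----eq: Disint YM BD}, of which it is morally the $p=1$ case after "cutting" $M$ along $\ell$; the direct route above has the advantage of requiring no surgery on the map.
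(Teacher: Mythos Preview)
The paper does not give its own proof of this proposition: it is stated with a citation to \cite{Lev2,Lev3} and then used without further justification. So there is nothing in the paper to compare your argument against line by line.

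That said, your proof is correct and self-contained. The reduction to the identity $\mathrm{U}_{M,G}=\int_{G/\mathrm{Ad}}\mathrm{U}_{M,C_{\ell\mapsto t},G}\,dt$ is the right move, and your proof of the key disintegration \eqref{eq:key-disint} via $G^n$-invariance and uniqueness of the invariant probability on each orbit $t(n)$ is a clean way to package what in \cite{Lev2} is done through an explicit construction of the conditional Haar measure. The only step that deserves a word of care is the passage from ``$g_*\mu_t=\mu_t$ for $dt$-a.e.\ $t$, for each fixed $g$'' to ``$\mu_t$ is $G^n$-invariant for $dt$-a.e.\ $t$'': this uses that $G^n$ is second countable and that $g\mapsto g_*\mu_t$ is weakly continuous, so one can pass through a countable dense set. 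Once that is noted, the remainder---multiplying by the heat-kernel density and invoking Tonelli---is exactly the unwinding of definitions you describe. Your final remark that this parallels the boundary disintegration \eqref{-----eq: Disint YM BD} is also apt; in \cite{Lev2,Lev3} both statements are indeed derived from the same conditional-Haar construction.
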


\begin{rmk} Remarkably, the Yang--Mills measure with constraints is related to the Yang--Mills  measure with boundary conditions, see for instance Theorem \ref{thm:split_separ} below. This relation plays the role of a Markov property. 
The holonomy fields under the Yang--Mills measure  can be furthermore understood as an example of a two-dimensional Markovian holonomy fields as defined in \cite{Lev2}.
\end{rmk}

\subsection{Yang--Mills holonomy field}

The  compatibility relation of Lemma \ref{_____Lem: Compat} suggests that the  measures considered can be obtained as the image of a single measure on a larger probability space. This has indeed been achieved in \cite{Lev3} and 
was then generalised in \cite{Lev2}, leading to the notion of (continuous) Markovian holonomy fields, allowing to consider a very large family of loops at once. Though it is not crucial to our argument, we recall here their definition, as it 
allows to reformulate our main results in a unified continuous model.  A different rigorous continuous approach  has been given earlier by \cite{Dri, Sen4, Sen0}. It relies on stochastic analysis of the white noise on the plane and the 
formula \eqref{------eq:DS form} is obtained as a consequence of the 
construction and it is  called the \emph{Driver--Sengupta formula}.  The random holonomy field can be  understood as the parallel transport of a random connection with curvature given by a white noise.  Recently, 
yet another continuous construction, defining a random connection one form has been given in \cite{Che} recovering the latter formula.

Let $\Sigma$ be a closed, connected, orientable surface, an open disc of $\R^2$ or $\R^2$ itself, endowed with an area measure $\mathrm{vol}$. For any $\ell\in \Ld(\Sigma)$, we use the same notation as in \eqref{------eq: Wilson loop}, for the random variable
\[
W_\ell= \tr(H_\ell).
\]
Furthermore, whenever $h\in \mathcal{M}(\mathrm{P}(\Sigma),G)$, the restriction of $h$ to a topological map $M$ on $\Sigma$ defines an element $\mathscr{R}_{M}(h)$ of $\mathcal{M}(\mathrm{P}(M),G)$. Let us finally mention that the space $\mathcal{M}(\mathrm{P}(\Sigma),G)$ can be endowed with two sigma-fields $\mathcal{C}$ and $\mathcal{J}$ defined as in the discrete case. The continuous extension to the discrete Yang--Mills measure is provided by the following result.

%associated to the canonical process $(h_\g, \g\in \mathrm{P}(\Sigma))$
%call Denote by 
%
%$\mathrm{P}(\Sigma)$ the set of continuous maps $[0,1]\to \Sigma$  
%In this text we shall assume that  with a measure of area of total mass $T>0.$ 

\bth[\cite{Lev2}]  Assume that $\Sigma$ is a closed, connected, orientable two dimensional Riemannian manifold, an open disc of $\R^2$ or $\R^2$,  with  area measure $\mathrm{vol}$. For any set $C=(t_1,\ldots,t_p)$ of boundary conditions, there exist   
probability measures $\YM_{\Sigma,G}$ and $\YM_{\Sigma,C,G}$ on
$(\mathcal{M}(\mathrm{P}(\Sigma),G),\mathcal{C})$, such that 
\begin{enumerate}
\item  for any rectifiable map $M$ on $\Sigma$,
\[
({\mathscr{R}_{M}})_*(\YM_{\Sigma,G})=\YM_{M,\mathrm{vol},G}\text{ and }({\mathscr{R}_{M}})_*(\YM_{\Sigma,C,G})=\YM_{M,C,\mathrm{vol},G},
\]
\item \[Z_{g,T} \YM_{\Sigma,G}= \int_{(G/\mathrm{Ad})^p}  Z_{(g,p),T}(t_1,\ldots,t_p)\YM_{M,(t_1,\ldots,t_p),\mathrm{vol},G}dt_1\ldots dt_p, \]
 \item if $(\g_n)_{n\ge 0}$ is a sequence of $\mathrm{P}(\Sigma)$  and $\g\in \mathrm{P}(\Sigma)$ with $\underline{\g_n}=\underline{\g}, \overline{\g_n}=\overline{\g},\forall n\ge 0$ and $d(\g_n,\g )\to 0$ as $n\to \infty,$ then,  under 
 $\YM_{\Sigma,G,C},$
 $(H_{\g_n})_{n\ge 0} $ converges in distribution towards $H_\g.$
\end{enumerate}
Moreover, if $\Psi$ is a diffeomorphism of $\Sigma$ preserving $\mathrm{vol}$,  then under $\YM_{\Sigma,C,G}$ and $\YM_{\Sigma,G},$ $W_\ell$ has same law as $W_{\Psi(\ell)}.$
\eth
\begin{prop}[\cite{Lev2}]  When $\ell\in \mathrm{L}(\Sigma)$ is a simple loop of a closed, connected, orientable two dimensional Riemannian manifold  $\Sigma$ with area measure $\mathrm{vol}$, and $t\in G/\mathrm{Ad},$ there is a measure $\YM_{\Sigma,C_{\ell\mapsto t},G}$ on 
$(\mathcal{M}(\mathrm{P}(\Sigma),G),\mathcal{C})$ such that 
\begin{enumerate}
\item  For any rectifiable map $M$ on $\Sigma$ such that $\ell$ is a drawing of a loop of $M$,
\[
({\mathscr{R}_{M}})_*(\YM_{\Sigma,C_{\ell\mapsto t},G})=\YM_{M,C_{\ell\mapsto t},\mathrm{vol},G}.
\]
Moreover the constant $Z_{M,a,G}(\ell;t)$ only depends on $\ell\in\Ld^2(\Sigma)$ and $t$. We denote it  by $Z_{\Sigma,G}(\ell;t).$
% \item If $(\g_n)_{n\ge 0}$ is a sequence of $\mathrm{P}(\Sigma)$  and $\g\in \mathrm{P}(\Sigma)$ with $\underline{\g_n}=\underline{\g}, \overline{\g_n}=\overline{\g},\forall n\ge 0$ and $d(\g_n,\g )\to 0$ as $n\to \infty,$ then,  under 
% $\YM_{\Sigma,G,C},$
% $(H_{\g_n})_{n\ge 0} $ converges in distribution towards $H_\g.$
 \item If $\Sigma$ has genus $g$ and total volume $T$,
   \begin{equation}\label{-----eq:deconditionCont}
Z_{g,T}\YM_{\Sigma}=\int_{G/\mathrm{Ad}}Z_{\Sigma,G}(\ell;t) \YM_{\Sigma,C_{\ell\mapsto t},a,G}dt.
\end{equation} 
\end{enumerate}
\end{prop}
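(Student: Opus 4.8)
The plan is to construct $\YM_{\Sigma,C_{\ell\mapsto t},G}$ as a projective limit of the discrete constrained measures $\YM_{M,C_{\ell\mapsto t},\mathrm{vol},G}$, following verbatim the scheme by which the Yang--Mills holonomy field and its boundary-conditioned versions are obtained in \cite{Lev2} from the discrete measures $\YM_{M,\mathrm{vol},G}$. So the work reduces to a compatibility statement for the constrained discrete measures, plus the general projective-limit machinery, plus a transfer of the discrete disintegration \eqref{eq:decondition}.

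The first step is the compatibility statement, i.e. the analogue of Lemma \ref{_____Lem: Compat} in the presence of the constraint: if $(M',a')$ is finer than $(M,a)$ and both $M$ and $M'$ carry $\ell$ as the drawing of a loop — which can always be arranged since $\ell$ is simple — then $(\mathscr{R}^{M'}_M)_*(\YM_{M',C_{\ell\mapsto t},a',G})=\YM_{M,C_{\ell\mapsto t},a,G}$ and $Z_{M',a',G}(\ell;t)=Z_{M,a,G}(\ell;t)$. The proof is the same heat-kernel computation as for Lemma \ref{_____Lem: Compat}: integrating out, face by face of $M$, the edges of $M'$ lying strictly inside that face turns $\prod_{f'\subset f}p_{a'_{f'}}(h_{\partial f'})$ into $p_{a_f}(h_{\partial f})$ via the convolution semigroup property $p_s*p_{s'}=p_{s+s'}$; the edges composing $\ell$, being edges of $M$, are never integrated out, and the constrained uniform measure is compatible too, since grouping the subdivided arcs of $\ell$ pushes $\delta_{t(n')}$ forward to $\delta_{t(n)}$ (a $G$-equivariant surjection between homogeneous spaces, sending invariant measure to invariant measure). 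In particular $Z_{M,a,G}(\ell;t)$ takes the same value for every rectifiable map $M$ on $\Sigma$ that carries $\ell$; that this value depends only on $\ell$ and $t$ follows from cutting $\Sigma$ along $\ell$ and identifying $Z_{M,a,G}(\ell;t)$ with the product of the partition functions of the one or two pieces obtained, which by the lemma on partition functions depend only on the genus, total area and boundary data of those pieces — all determined by $\ell$ and $t$.

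It remains to promote this to a measure on $(\mathcal{M}(\mathrm{P}(\Sigma),G),\mathcal{C})$. The rectifiable maps on $\Sigma$ carrying $\ell$ form a directed set under refinement (two of them admit a common refinement, by superimposing the graphs and subdividing at the finitely many new intersection points, which can be done keeping $\ell$ a loop since $\ell$ is simple), so by the previous step $(\YM_{M,C_{\ell\mapsto t},\mathrm{vol},G})_M$ is a projective family along this directed set; and for an arbitrary rectifiable map $M_0$ one defines the pushforward along $\mathscr{R}_{M_0}$ through any rectifiable common refinement of $M_0$ and $\ell$, independence of the refinement being the content of the previous step. The projective-limit construction of \cite{Lev2} — Kolmogorov extension on the compact group $\mathcal{M}(\mathrm{P}(\Sigma),G)$, combined with the continuity property of the preceding theorem to pass from maps to all rectifiable paths — then yields a unique $\YM_{\Sigma,C_{\ell\mapsto t},G}$ satisfying item 1. (More conceptually, one may instead cut $\Sigma$ along $\ell$ and define $\YM_{\Sigma,C_{\ell\mapsto t},G}$ by gluing the already-constructed boundary-conditioned fields of the one or two pieces along their common boundary $\ell$; this is in substance Theorem \ref{thm:split_separ}.)

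For item 2, fix a rectifiable $M$ carrying $\ell$, with $a$ the area function induced by $\mathrm{vol}$. The discrete disintegration \eqref{eq:decondition} reads $Z_{M,a,G}\,\YM_{M,a,G}=\int_{G/\mathrm{Ad}}Z_{M,a,G}(\ell;t)\,\YM_{M,C_{\ell\mapsto t},a,G}\,dt$, where $Z_{M,a,G}=Z_{g,T}$ and $Z_{M,a,G}(\ell;t)=Z_{\Sigma,G}(\ell;t)$ by the lemma on partition functions and item 1. Pushing this identity of measures forward along $\mathscr{R}_M$ for every such $M$ — the right-hand side being a genuine measure because $t\mapsto\YM_{M,C_{\ell\mapsto t},a,G}$ is weakly continuous, hence $t\mapsto\YM_{\Sigma,C_{\ell\mapsto t},G}(A)$ measurable for $A\in\mathcal{C}$ — and using that these pushforwards determine a measure on $(\mathcal{M}(\mathrm{P}(\Sigma),G),\mathcal{C})$ gives \eqref{-----eq:deconditionCont}. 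The step I expect to be the real obstacle is the one of the third paragraph: checking that the constrained discrete measures, natural only on maps carrying $\ell$, extend coherently to a consistent family over \emph{all} rectifiable maps so that the \cite{Lev2} projective-limit machinery applies unchanged; the refinement compatibility of the second paragraph is a routine variant of the already-cited Lemma \ref{_____Lem: Compat}, and item 2 is then a soft consequence.
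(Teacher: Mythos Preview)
The paper does not supply a proof of this proposition: it is stated with the attribution \cite{Lev2} and used as input, so there is no ``paper's own proof'' to compare against. Your sketch is a reasonable outline of how the result is established in \cite{Lev2}: compatibility under refinement of the constrained discrete measures (the analogue of Lemma~\ref{_____Lem: Compat}), projective-limit/extension to the continuous holonomy field, and passage of the discrete disintegration \eqref{eq:decondition} to the limit. Your alternative remark---cut $\Sigma$ along $\ell$ and glue boundary-conditioned fields---is indeed closer to how \cite{Lev2,Lev3} organise things (cf.\ Theorem~\ref{thm:split_separ} here), and makes the ``only depends on $\ell$ and $t$'' claim for $Z_{\Sigma,G}(\ell;t)$ transparent. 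Nothing in your outline is wrong, but since the paper treats this as a quoted result there is nothing further to reconcile.
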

The random process $(H_\g,\g \in \mathrm{P}(\Sigma))$ with distribution given by $\YM_{\Sigma,C,G}$ is called the \emph{Yang--Mills holonomy field} on $\Sigma$. We are primarily interested in the random variables $W_\ell$ for $\ell\in  
\mathrm{L}(\Sigma)$, with structure group a classical group $G_N$ of size $N$.

\subsection{Master fields, conjectures and main results}

Following the physics literature, I. M. Singer raised in \cite{Sin}  the following question, that  we will reformulate  here in a slightly\footnote{the specific family of loops was not specified in \cite{Sin}.} modified form as a  conjecture.

\begin{conj}  \label{_________Conj: Singer} Let $\Sigma$ be a  closed, connected, orientable, two dimensional, Riemannian manifold  $\Sigma,$ or  the Euclidean plane $\R^2,$ or a disc of $\R^2$.  Assume that $G_N$ is a classical group of size $N$ with metric given by \eqref{------eq:inner P LieAlg}. Then for  any loop $\ell\in \Ld(\Sigma) $, there 
is a constant $\Phi_\Sigma(\ell)$ such that, under $\YM_{\Sigma,G},$ 
\begin{equation}
W_\ell \to \Phi_\Sigma(\ell) \text{ in probability as }N\to \infty.\label{------eq: Conv PP}
\end{equation}
If $\Psi$ is a diffeomorphism of $\Sigma$ preserving its volume form,
$$\Phi_\Sigma(\Psi(\ell))=\Phi_\Sigma(\ell).$$
\end{conj}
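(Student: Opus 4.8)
The displayed limit is stated as a conjecture, and I will not settle it in full: what the paper establishes (Theorems~\ref{__THM: Disc YMCont} and~\ref{__THM:simple non-contrac}) is its validity for loops traced inside a topological disc of $\Sigma$ and for iterations of simple non-contractible loops, while the diffeomorphism invariance is immediate --- by the invariance of $\YM_{\Sigma,C,G}$ recalled above, $W_\ell$ and $W_{\Psi(\ell)}$ are equidistributed, so if $W_\ell$ converges in probability to a constant, so does $W_{\Psi(\ell)}$, to the same one. The plan for the substantive part, the convergence~\eqref{------eq: Conv PP}, is to bypass the Makeenko--Migdal equations and use instead the large-$N$ behaviour of the partition functions $Z_{(g,p),T,G_N}$.

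\textbf{Step 1 (partition functions).} Generalising \cite{Gur,Lem}, I would first show that $Z_{(g,p),T,G_N}(t_1,\dots,t_p)$ converges as $N\to\infty$ for every classical series, every genus $g\ge 1$ and every area, starting from the Migdal--Driver--Sengupta expansion
\[
Z_{(g,p),T,G_N}(t_1,\dots,t_p)=\sum_{\lambda}(\dim\lambda)^{\,2-2g-p}\,e^{-\frac{T}{2}c_2(\lambda)}\prod_{i=1}^{p}\chi_\lambda(t_i),
\]
the sum over irreducible representations $\lambda$ of $G_N$, $c_2(\lambda)$ the Casimir eigenvalue for the metric~\eqref{------eq:inner P LieAlg}. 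Parametrising representations by highest weights, one reads off the large-$N$ asymptotics of $\dim\lambda$ and $c_2(\lambda)$ and shows that for $g\ge1$, where the exponent $2-2g-p$ is non-positive, the series is dominated uniformly in $N$ by a convergent one and passes to the limit termwise; only the representations of bounded dimension (the powers of $\det$ for $\U(N)$, only the trivial one for $\SU(N),\SO(N),\Sp(N)$) contribute nontrivially, which makes the limit finite and positive. I expect this to be the principal obstacle, since the dimension and Casimir estimates have to be made uniform in $N$ for all four series at once, the symplectic and orthogonal cases being heavier than the unitary one of \cite{Lem}.

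\textbf{Step 2 (loops in a disc).} For $\ell$ inside a topological disc $D\subset\Sigma$, I would pick an embedded disc $D'\supset D$ with $\ell\subset D'$, set $\delta=\partial D'$ and $s=\mathrm{vol}(D')$, and apply the deconditioning formula~\eqref{-----eq:deconditionCont} along $\delta$. Conditionally on $H_\delta=t$ the field inside $D'$ is the Yang--Mills field on the disc $D'$ with boundary holonomy $t$, which depends on $(D',t)$ only; and cutting $\Sigma$ along $\delta$ gives $Z_{\Sigma,G_N}(\delta;t)=p_s(t)\,Z_{(g,1),T-s,G_N}(t)$. Transporting $D'$ to a disc $\widetilde D'\subset\R^2$ by an area-preserving diffeomorphism, so that $\ell$ becomes $\widetilde\ell$, and recalling that the holonomy of a simple loop of area $s$ in the planar field is $p_s$-distributed, one sees that, under this identification, the law of the field restricted to $D'$ is, for $\YM_{\Sigma,G_N}$, absolutely continuous with respect to its law for $\YM_{\R^2,G_N}$, with density $F_N=Z_{(g,1),T-s,G_N}(H_{\widetilde\delta})/Z_{g,T,G_N}$. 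This $F_N$ is non-negative, has $\YM_{\R^2,G_N}$-expectation $1$, and --- using $|Z_{(g,1),T-s,G_N}(\cdot)|\le Z_{g,T-s,G_N}$, $Z_{g,T,G_N}\ge1$ and the uniform bound from Step 1 --- is bounded by a constant independent of $N$. Since $W_{\widetilde\ell}\to\Phi_{\R^2}(\widetilde\ell)$ in $\YM_{\R^2,G_N}$-probability \cite{AS,Lev2} and a bounded density change preserves convergence in probability, $W_\ell\to\Phi_{\R^2}(\widetilde\ell)$ under $\YM_{\Sigma,G_N}$; thus $\Phi_\Sigma(\ell)=\Phi_{\R^2}(\widetilde\ell)$, independent of the genus.

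\textbf{Step 3 (simple non-contractible loops).} For such an $\ell$ and $n\ge1$, cutting $\Sigma$ along $\ell$ gives either a genus-$(g-1)$ surface with two boundary components or two surfaces of genera $g_1+g_2=g$ with one boundary each, so the law of $H_\ell$ under $\YM_{\Sigma,G_N}$ has, with respect to Haar measure, density proportional to $\sum_\lambda(\dim\lambda)^{2-2g}e^{-\frac{T}{2}c_2(\lambda)}|\chi_\lambda(g)|^2$ (respectively to a product of two genus-$g_i$ one-boundary partition functions). Using Schur orthogonality, $\int_{G_N}\tr(g^n)|\chi_\lambda(g)|^2\,dg$ contributes nothing in the limit --- indeed it vanishes identically for $\U(N)$, so the surviving $\det$-twists do no harm --- giving $\E_{\YM_{\Sigma,G_N}}[W_{\ell^n}]\to0$; a uniform-in-$\lambda$ and genus-weighted bound on $\int_{G_N}|\tr(g^n)|^2|\chi_\lambda(g)|^2\,dg$, together with $\sum_\lambda(\dim\lambda)^{2-2g}e^{-\frac{T}{2}c_2(\lambda)}=Z_{g,T,G_N}$, gives $\E_{\YM_{\Sigma,G_N}}[|W_{\ell^n}|^2]=O(N^{-2})$, hence $W_{\ell^n}\to0$ in probability. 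The loops that are neither drawn in a disc nor iterations of simple ones fall outside this scheme and are left to \cite{DL}.
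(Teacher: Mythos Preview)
Your outline is faithful to the paper's strategy: the statement is indeed a conjecture, the partial results proved are exactly the two families of loops you name, and the key input is the convergence of partition functions (Theorem~\ref{__THM:PF}) feeding an absolute-continuity argument (Lemma~\ref{_____Lem:Density dYM}) for loops in a disc and a density/moment analysis for simple non-contractible loops. Your Step~2 is essentially the paper's proof of Theorem~\ref{__THM:Disc dYM}.

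Two small corrections to Step~3. First, the paper does not run the moment computation uniformly over all cases: for separating loops and for non-separating loops with $g\ge 2$, the density of $H_\ell$ with respect to Haar measure is bounded by a ratio of partition functions that stays finite (Lemma~\ref{cor:Density_sep} and the bound $\|\varphi_{g,T}\|_\infty=Z_{g-1,T}$), so one simply transfers the Haar result $\tr(H^n)\to 0$ of \cite{DiaconisEvans}. The moment argument is reserved for the genus-one non-separating case, where the density has supremum $Z_{0,T}/Z_{1,T}\to\infty$ and absolute continuity fails; there the paper invokes a Pieri-type identity (Lemma~\ref{_____Lem:Pieri Gen}) to control $\int \Tr(g^k)\chi_\lambda(g)\overline{\chi_\lambda(g)}\,dg$ and its square. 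Second, that lemma gives $J_\lambda\le n$, hence $\E[|W_{\ell^k}|^2]\le 1/n=O(N^{-1})$, not $O(N^{-2})$; and the first moment does not vanish identically outside the unitary series---one only gets $|\E[W_{\ell^k}]|\le 1/n$ via $|c_{\lambda,k}^\lambda|\le 1$. These are rates, not obstructions, so your conclusion stands.
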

The limit function
$$\Phi_\Sigma: \mathrm{L}(\Sigma)\to \C$$
is called a \emph{master field}. This conjecture has  been partly proved for the plane, working with a  smaller class of loops in \cite{Xu,AS}, for unitary groups. In \cite{Lev} , it was 
 simultaneously proved for the plane and for all above group series. Recently another argument using Makeenko--Migdal equations was also given in \cite{Hal2}. 
\begin{rmk}  For any loop, $\ell\in \Ld(\Sigma),$ since $H_\ell$ is a unitary matrix, $|\tr(H_\ell)|\le 1$ and the convergence in probability \eqref{------eq: Conv PP} is equivalent to 
\begin{equation*}
\E[|\tr(H_\ell)-\Phi_{\Sigma}(\ell)|]\to 0 \tag{*}
\end{equation*}
as $N\to \infty.$ When we say that \eqref{------eq: Conv PP} holds uniformly in the set of area vectors $a$, it is equivalent to the uniformity of the convergence (*)  in $a.$ 
To show (*), it  is sufficient to show
$$\E[\tr(H_\ell)]\to \Phi_{\Sigma}(\ell) \text{ and }\mathrm{Var}(\tr(H_\ell))=\E[\tr(H_\ell)\overline{\tr(H_\ell)}]-|\E[\tr(H_\ell)]|^2\to 0.$$
\end{rmk}
\begin{rmk} The linear extension of a master  field $\Phi_\Sigma$ to $\C[\mathrm{L}(\Sigma)] $ comes automatically  with a structure of non-commutative probability space 
\cite{Lev,DN}.  In the case of the plane, this non-commutative distribution  can be characterised using free probability (\cite{Lev} and \cite{CDG}). In that case, one can 
recover the 
distribution of a free Brownian motion from the master field (\cite{Bia,Lev,CDG}). 
\end{rmk}

\begin{thm}[\cite{Xu,AS,Lev} and \cite{DN}]\label{__THM: Plane}
The conjecture \ref{_________Conj: Singer} holds true when $\Sigma$ is an open disc of the plane or a sphere of total area $T>0$.  Consider  $\ell\in\Ld(M)$ where $(M,a)$ is  a closed, area-weighted map with \emph{fixed} total area $T.$ Then  
 under $\YM_{M,a},$ 
$$W_\ell\to \Phi_{M,a}(\ell) \text{ in probability},$$
where the right-hand side is deterministic  and depends continuously on $a$, over $a\in \R_+^{\Fbb}$ in the case of the plane and $a\in \Delta_M(T)$  in the case of the sphere.
\end{thm}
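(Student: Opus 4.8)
The plan is to reduce the statement to the convergence of the first two moments of $W_\ell$ and then to run the recursion provided by the Makeenko--Migdal equations. By the remark preceding the statement it suffices to show that $\E[W_\ell]$ converges to a deterministic limit $\Phi_{M,a}(\ell)$ and that $\mathrm{Var}(W_\ell)\to 0$, both with the claimed continuity in the area vector $a$ (uniform on $\R_+^{\Fbb}$ in the disc case, on $\Delta_M(T)$ in the spherical case). Since $|W_\ell|\le 1$ uniformly in $N$ and $a$, the candidate limit, once shown to exist along the recursion, is automatically bounded by $1$, which keeps all error terms under control and makes compactness arguments available.

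For the disc (or the plane), the base case I would treat first is that of a simple loop $\ell$ bounding a face of area $t$: by the Driver--Sengupta formula \eqref{------eq:DS form} the holonomy $H_\ell$ is distributed as a Brownian motion on $G_N$ run for time $t$, and the convergence of the normalised trace of the $G_N$-Brownian motion (to $e^{-t/2}$ for $\U(N)$, and to the corresponding limit for the other series) is classical. For a general loop I would fix a topological map $M$ refining $\ell$ and argue by induction on the number of faces and of self-intersections, using the Makeenko--Migdal equations established on the plane in \cite{Lev2,Dah2,DHK}: the derivative of $\E[W_\ell]$ in the area of a face adjacent to a crossing is expressed through Wilson loops of the two loops obtained by desingularising that crossing, which are strictly simpler, while letting the area of the unbounded face tend to infinity (equivalently, shrinking the loop) supplies the initial condition of the resulting differential system. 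The a priori bound $|W_\ell|\le 1$ together with the Lipschitz continuity of $a\mapsto\E[W_\ell]$, again read off from \eqref{------eq:DS form}, makes the recursion well posed and shows that the limit $\Phi_{M,a}(\ell)$ is continuous in $a$; alternatively one can identify the limiting non-commutative distribution with that of a free multiplicative Brownian motion, following the free-probabilistic approach of \cite{Xu,AS,Lev}.

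For the sphere the same Makeenko--Migdal machinery applies (the equations on closed surfaces being due to \cite{DHK,DGHK}), but the argument must be closed differently, since there is no distinguished infinite face in which to send an area to infinity. Here I would condition on the holonomy of a fixed simple loop via the disintegration formula \eqref{-----eq:deconditionCont}, which cuts the sphere into two discs carrying Yang--Mills measures with a common boundary condition; the weight of this decomposition is governed by ratios of partition functions $Z_{g,T}$, whose large-$N$ behaviour on the sphere has to be identified and controlled uniformly in $a\in\Delta_M(T)$. This is where the Douglas--Kazakov phase transition intervenes, and I expect the control of these partition functions and of the integral over the boundary condition to be the main obstacle of the proof.

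Finally, the vanishing of the variance is obtained in the same spirit: $\E[W_\ell\overline{W_\ell}]$ satisfies its own Makeenko--Migdal-type recursion whose source terms carry an extra factor of order $1/N^2$, so $\mathrm{Var}(W_\ell)\to 0$ at the same rate at which the first moment converges; alternatively one may invoke concentration of measure for the heat-kernel weights. Combining the convergence of $\E[W_\ell]$ with the vanishing of the variance yields \eqref{------eq: Conv PP} together with the asserted continuity in $a$, which is exactly the content of Theorem \ref{__THM: Plane}.
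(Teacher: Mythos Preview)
The paper does not contain a proof of Theorem \ref{__THM: Plane}: it is stated as a known result, attributed to \cite{Xu,AS,Lev} for the plane and to \cite{DN} for the sphere, and is used as a black box in the proof of Theorem \ref{__THM:Disc dYM}. There is therefore no ``paper's own proof'' to compare against.

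That said, your sketch for the plane is broadly faithful to the cited literature. Your proposal for the sphere, however, has a genuine gap. Conditioning on the holonomy of a simple loop and cutting the sphere into two discs via \eqref{-----eq:deconditionCont} does not reduce the problem to the planar master field already established: the resulting disc measures carry a \emph{boundary condition} $t\in G_N/\mathrm{Ad}$, and you would need convergence of Wilson loops under $\YM_{D,t,G_N}$ \emph{uniformly in $t$}, weighted by $Z_{(0,1),\cdot}(t)$. Since the sphere partition function behaves like $e^{-N^2 F(T)}$ (Theorem \ref{__THM: DK PT}), the weight concentrates on a codimension-one set of boundary conditions, and the planar result with free boundary gives no information there. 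This is not a technicality that Douglas--Kazakov asymptotics alone resolve.

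The actual argument in \cite{DN} is structured differently: convergence for \emph{simple} loops on the sphere is proved first, directly, by expanding the holonomy density in characters and identifying the resulting sum with a discrete $\beta$-ensemble (a discretisation of the minimiser of a logarithmic energy functional), whose large-$N$ limit is analysed by potential-theoretic and large-deviations methods; this is where the Douglas--Kazakov transition enters and where the formula $\Phi_{\Sbb^2_T}(\ell^n)=J_1(2n\sigma)$ comes from. Only after this base case is in hand are the Makeenko--Migdal equations used, as a closed recursion on $\Delta_M(T)$, to propagate convergence to loops with self-intersections. Your induction scheme has the right shape, but the base case on the sphere cannot be supplied by cutting into discs.
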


The 
work  \cite{Lev} was the first to 
show rigorously  that the master field $\Phi_{\R^2}$  satisfies a set of differential equations named after Makeenko--Migdal equations, that appeared earlier in the physics papers 
\cite{MM,KK}. In 
\cite{DN}, the conjecture was proved for unitary 
groups, in the 
case of 
the sphere. 

\begin{ex}[\cite{Xu,AS,Lev,DN}] Assume that $\ell\in \Ld(\R^2)$ is simple and encloses an area $t.$
For $n\ge 1,$
$$\mu_t(n):=\Phi_{\R^2}(\ell^n)=\frac{e^{-\frac{nt}2 }}{n}\sum_{m=0}^{n-1} \frac{(-nt)^m}{m!} {n \choose m+1}.$$
 Denote by $\Sbb^2_T $ the two-dimensional Euclidean sphere with total volume $T$.  When\footnote{An the expression for any $T>0$  was proved in \cite[Thm 2.4]{DN}. }  $T\le \pi^2$ and  $\ell\in 
\Ld(\Sbb_T^2)$ 
is simple, cutting  the sphere into two domains of area $t$ and $T-t,$
$$\mu_{t,T}(n):=\Phi_{\Sbb^2_T}(\ell^n)= \frac{1}{n\sigma} J_1(2 n \sigma)=\int_{-2}^2\exp(i n\sigma x) \frac{\sqrt{4-x^2} dx}{\pi},  $$
where $\sigma= \sqrt{\frac{t(T-t)}{T}}$ and 
$$J_{1}(x)=\sum_{m\ge 0} \frac{(-1)^m}{m!(m+1)!}\left(\frac x 2 \right)^{2m+1}$$
is a Bessel function of the first kind.
 \end{ex}
\begin{rmk}  \begin{enumerate}
\item Note that  when $n\ge 1$ and $T\le \pi^2$ are fixed, $\mu_t(n)$ and $\mu_{t,T}(n)$ are different functions of $t$. This is also true \cite{DN} when $T>\pi^2,$ though 
the expression of $\mu_{t,T}$ is different.\footnote{In the physics literature, the regimes $T\le \pi^2$ and $T>\pi^2$ are  respectively 
called the weak and the strong regimes \cite{DK}.} Therefore, 
when $\ell$ is a simple loop enclosing a disc of area $t$, 
the 
expression of  the master field is not the same when the surface in which $\ell$ is drawn is the plane or the sphere.
\item Let us highlight nonetheless two relations between the master field on the sphere and on the plane.  On the 
one hand, it can be shown  \cite{DN}  that
\begin{equation}
\lim_{T\to \infty}\mu_{t,T}(n)= \mu_{t}(n),\forall n\ge 1. \label{------eq:Strong Sphere to plane}
\end{equation}
On the other hand, it follows from dominated convergence that for all $t\ge 0$
$$\lim_{k\to \infty}\mu_{\frac{t}{k^2}}(nk)= J_1(2 n \sqrt{t}). $$
Therefore, for all $0\le t\le T\le \pi^2,$
\begin{equation}
\lim_{k\to \infty}\mu_{\frac{\sigma^2}{k^2}}(nk)=\mu_{t,T}(n),\forall n\ge 1.\label{------eq:Conv Plane to weak sphere}
\end{equation}
\item The sequences $\mu_t(n)$ and $\mu_{t,T}(n),n\ge 1$ are moment sequences of  measures $\mu_t$ and $\mu_{t,T}$ on the unit circle,  associated to a time marginal of the 
free Brownian motion \cite{Bia} and of the free brownian bridge \cite{DN}. 
Since both $\mu_t$ and $\mu_{t,T}$ are invariant by complex conjugation, \eqref{------eq:Strong Sphere to plane} and \eqref{------eq:Conv Plane to weak sphere} imply the weak 
convergences
$$\mu_{t,T}\to \mu_t  \text{ as  }T\to \infty$$
and for any $t\le T\le \pi^2,$
$$\mu^k_{\frac{\sigma^2}{k^2}}\to \mu_{t,T} \text{ as }k\to\infty, $$
where for any measure on $\nu$ on the unit circle, $\nu^k$ denotes the push forward of $\nu$ by $z\mapsto z^k.$ 
\end{enumerate}
\end{rmk}

The conjecture \ref{_________Conj: Singer} remains open for general surfaces. Though, using the Makeenko--Migdal equations on surfaces proved in \cite{DGHK},  it was realised in  
\cite{DN,Hal2} that it is sometimes enough to show the convergence for a restricted family of 
loops. This idea was exploited for general surfaces in \cite{Hal2} yielding the following theorem.\footnote{Another similar result is obtained in \cite{Hal2}, where only an 
assumption on simple loops is made. For $g\ge 1,$ the conclusion is then weaker and holds for loops with constrained area vector.} Let us say that a loop $\ell$ of a map $M$ is included 
in a disc $U$  if its drawing is included in an open, contractible set $U$ of $\Sigma$.

\begin{thm}[\cite{Hal2}] \label{__THM:Hall}Consider $G_N=\U(N)$. Assume that  whenever  $\ell=s^n$ with $n\ge 0$ and $s$ is a simple loop included in a disc, of a closed, area-weighted 
topological map\footnote{Recall that  we assumed here that, by convention, a closed topological map can be embedded in a closed, connected, orientable surface.} $(M,a)$, under $\YM_{M,a,G_N},$  the random variable $W_\ell$ converges in 
probability towards a constant as $N\to \infty$.  
 Then this also holds true for $W_\ell$  for any combinatorial loop $\ell$  included in a disc.

\end{thm}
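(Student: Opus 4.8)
The plan is to reduce an arbitrary combinatorial loop $\ell$ included in a disc $U$ to an iterated simple loop by the Makeenko--Migdal recursion, running the induction on the number of self-intersections of $\ell$ (equivalently on the number of faces of a topological map carrying $\ell$). The base case is exactly the hypothesis of the theorem: when $\ell=s^n$ for a simple loop $s$ included in a disc, $W_\ell$ converges in probability to a constant. For the inductive step, I would first invoke Lemma \ref{_____Lem: Compat} to pass to a topological map $M$ on $\Sigma$, finer if necessary, such that $\ell\in\mathrm{L}(M)$ has all its self-intersections at transverse double points, and such that the disc $U$ is a union of faces of $M$; since $W_\ell$ has the same law under any such refinement, no generality is lost. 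Since $\ell$ lies in the contractible set $U$, every self-intersection point $v$ of $\ell$ is an honest crossing that sits inside $U$ and the four faces adjacent to $v$ all lie within $U$ (or one of them is the unbounded-in-$U$ face of the disc, which after a volume-preserving diffeomorphism of $\Sigma$ can be arranged to behave like the face at infinity in the plane picture).

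The heart of the argument is the Makeenko--Migdal equation on $\Sigma$ from \cite{DGHK}: fixing a self-intersection $v$ of $\ell$ with the four adjacent faces $F_1,F_2,F_3,F_4$ labelled cyclically, one has an identity of the schematic form
\[
\Bigl(\frac{\partial}{\partial a_{F_1}}-\frac{\partial}{\partial a_{F_2}}+\frac{\partial}{\partial a_{F_3}}-\frac{\partial}{\partial a_{F_4}}\Bigr)\,\E\bigl[W_\ell\bigr]
\;=\;\pm\,\E\bigl[W_{\ell'}W_{\ell''}\bigr]
\]
where $\ell',\ell''$ are the two loops obtained by splitting $\ell$ at $v$, each having strictly fewer self-intersections than $\ell$, and each again included in the disc $U$. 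By the induction hypothesis applied to $\ell'$ and to $\ell''$, together with the variance-vanishing reformulation in the Remark following Conjecture \ref{_________Conj: Singer} (so that $\E[W_{\ell'}W_{\ell''}]$ factorises asymptotically into $\Phi(\ell')\Phi(\ell'')$ and the product converges), the right-hand side converges as $N\to\infty$ to a continuous function of the area vector $a$. Thus the Makeenko--Migdal directional derivatives of $\lim_N\E[W_\ell]$ exist and are continuous; to promote this to convergence of $\E[W_\ell]$ itself I would move along a path in the space of area vectors that shrinks all the bounded faces of $U$ except one down to zero area, using the MM relations to integrate, and use a deformation-to-a-simple-loop argument: collapsing the areas between strands of $\ell$ inside $U$ turns $\ell$ (in the limit) into an iterate of a simple loop, where the base case applies. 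A separate, standard input is needed here: continuity of $N\mapsto\E[W_\ell]$ and of its limit in the area parameters, plus the fact that the MM vector fields together with the total-area vector field span the tangent space to $\Delta_M(T)$ — this is where one uses that the loop is genuinely in a disc, since that guarantees enough independent faces and the right spanning property, exactly as in the plane case of \cite{Lev}.

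I would then upgrade convergence of the expectation to convergence in probability by the same recursion applied to $\E[W_\ell\overline{W_\ell}]=\E[W_\ell W_{\ell^{-1}}]$: the loop $\ell\ell^{-1}$ (suitably interpreted as a single loop in a refined map) is again included in the disc $U$ and has a controlled number of self-intersections, so the induction yields $\mathrm{Var}(W_\ell)\to 0$. Finally, the identification of the limiting constant with the evaluation of the planar master field — the content of the sharper Theorem \ref{__THM: Disc YMCont} — is not needed for Theorem \ref{__THM:Hall} as stated, which only asserts convergence to \emph{some} constant; that identification is obtained afterwards by transporting the disc $U$ into $\R^2$ via an area-preserving diffeomorphism and matching the two recursions, which have identical base cases once the partition-function asymptotics of Section \ref{sec:Pf Charact} are in hand. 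The main obstacle I anticipate is the integration/spanning step: checking carefully that the Makeenko--Migdal directional derivatives, restricted to loops lying in the disc, together with scaling, genuinely determine $\lim_N\E[W_\ell]$ from the base case — i.e. that one never gets "stuck" at a configuration that is not reachable from an iterated simple loop by MM moves within the disc — and handling the continuity/uniformity in $a$ needed to justify passing limits under the area derivatives.
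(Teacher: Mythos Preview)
Your proposal reconstructs Hall's original argument from \cite{Hal2} via the Makeenko--Migdal recursion on the number of self-intersections, and as an outline of that route it is sound; the obstacle you flag at the end (spanning and uniform continuity in $a$ to justify integrating the MM relations back to the simple-loop base case) is indeed the delicate point in that approach.

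The present paper, however, takes a completely different route and explicitly avoids the Makeenko--Migdal equations. Rather than proving the conditional implication of Theorem~\ref{__THM:Hall}, it proves the unconditional Theorem~\ref{__THM:Disc dYM} directly, which renders both the hypothesis and the conclusion of \ref{__THM:Hall} true at once. The key step is Lemma~\ref{_____Lem:Density dYM}: the pushforward of $\YM_{M,a,G_N}$ to the disc map $\tilde M_U$ is absolutely continuous with respect to the \emph{planar} Yang--Mills measure $\YM_{\tilde M_U,a_U,G_N}$, with density $Z_{(g,1),u,G_N}(h_{\partial U}^{-1})/Z_{g,T,G_N}$. Since $\|Z_{(g,1),u,G_N}\|_\infty=Z_{g,u,G_N}$ by \eqref{------eq:Bound Dens gCap} and both $Z_{g,u,G_N}$ and $Z_{g,T,G_N}$ converge to finite positive limits by Theorem~\ref{__THM:PF}, the density is bounded uniformly in $N$. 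Convergence in probability of $W_\ell$ on $\Sigma$ then follows in one line from the known convergence on the plane (Theorem~\ref{__THM: Plane}), and the limit is identified for free as $\Phi_{\tilde M_U,a_U}(\ell)$.

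What each approach buys: your MM recursion makes the role of the simple-loop hypothesis structurally transparent and is the natural proof of the \emph{conditional} statement as written. The paper's absolute-continuity argument needs no recursion, no spanning lemma, no integration of area derivatives, and no separate variance step; it works for all classical groups at once, removes the hypothesis rather than using it, and immediately pins down the limit as the planar master field.
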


B. Hall conjectured further that the above assumption can be removed. 

\begin{conj}[\cite{Hal2}] Consider $G_N=\U(N)$.  Whenever  $\ell=s^n$ with $n\ge 0$ and $s$ is a simple loop  included in a disc of a closed, area-weighted topological map $(M,a)$, 
$W_\ell$ converges in probability towards a constant as $N\to 
\infty$, under $\YM_{M,a,G_N}$.   
\end{conj}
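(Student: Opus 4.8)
The plan is to prove convergence of $W_{s^n}$ for a simple loop $s$ included in a disc $U\subset\Sigma$ by reducing everything to the partition function asymptotics, following the strategy the introduction already announces (``our argument does not rely on the Makeenko--Migdal equations but mainly on the convergence of the partition function''). First I would fix a rectifiable map $M$ on $\Sigma$ adapted to $s$, so that $s$ is a simple loop of $M$ bounding, together with the other edges, a topological sub-disc; by Lemma \ref{_____Lem: Compat} the law of $W_{s^n}$ under $\YM_{M,a,G_N}$ is unchanged by refinement, so I may assume $U$ is the interior of a single face or a small union of faces and that $s=\partial D$ for a contractible region $D$ of area $t$. The key structural input is the disintegration formula \eqref{eq:decondition}: conditioning on the holonomy class $H_s\in t$ splits $\Sigma$ along $s$ into a disc $D$ of area $t$ carrying the planar Yang--Mills measure with boundary condition $t$, and a surface $\Sigma'$ of genus $g$ with one boundary component of area $T-t$ and boundary condition $t$. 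Concretely,
\begin{equation}
\E_{\YM_{M,a,G_N}}[W_{s^n}] = \frac{1}{Z_{g,T,G_N}}\int_{G_N/\mathrm{Ad}} \tr(h^n)\, Z_{(g,1),T-t,G_N}(h)\, p_t(h)\, dh,
\end{equation}
where I have used the Driver--Sengupta form \eqref{------eq:DS form} for the disc factor to expose the heat kernel $p_t$ explicitly. The same computation with $\tr(h^n)\overline{\tr(h^n)}$ in place of $\tr(h^n)$ controls the variance.

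The second step is the analytic heart: I would substitute the character expansions of $p_t$ and of the partition functions $Z_{(g,p),T,G_N}$ — these are the formulas promised in Section \ref{sec:Pf Charact} and are exactly the objects whose large $N$ behaviour is established by the generalisation of \cite{Gur,Lem} to all classical series and all areas. After inserting these expansions and integrating out $h\in G_N/\mathrm{Ad}$ using orthogonality of characters (the $\tr(h^n)$ expands into irreducible characters with known multiplicities, e.g. via Frobenius/Schur--Weyl for $\U(N)$ and the analogous branching for $\SO,\Sp$), one is left with a ratio of sums over Young diagrams weighted by $\exp(-\tfrac{c_2(\lambda)}{2N}\cdot(\text{area}))$ times dimension factors $(\dim\lambda)^{2-2g}$ or $(\dim\lambda)^{1-2g}$. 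The claim reduces to showing this ratio converges, and that its limit is a constant independent of the randomness. Here the genus $g\ge 1$ hypothesis is what makes life easy rather than hard: the exponent $2-2g\le 0$ means the trivial representation dominates overwhelmingly — this is precisely the ``triviality of the large $N$ limit on higher genus surfaces'' alluded to in the introduction — so the dominant contribution comes from $\lambda$ close to trivial, and a dominated-convergence / saturation argument of the type in \cite{Lem} finishes it. The variance estimate is structurally identical: the cross term $\E[\tr(h^n)\overline{\tr(h^n)}]$ produces a sum over pairs of diagrams, and the leading term factorises as $|\E[\tr(h^n)]|^2$ plus a correction that is $O(1/N^2)$ by the same diagram-weight bounds, giving $\mathrm{Var}(W_{s^n})\to 0$.

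The main obstacle I anticipate is uniform control of the character sums for $\SO(N)$ and $\Sp(N)$: the generalisation of the partition function asymptotics of \cite{Lem} (done there for $\U(N)$) to the orthogonal and symplectic series requires handling the different root systems, the extra factor-of-$2$ in $\beta$, and the slightly awkward dimension formulas, and crucially requires that the convergence be \emph{uniform in the area parameter} on compact subsets of $(0,\infty)$ — since here the relevant area $T-t$ ranges over an interval as $t$ varies and, more importantly, I must later let the map be refined. Proving that uniformity — i.e. that the sum over nontrivial $\lambda$ is bounded by a summable envelope independent of $N$ and of the area, for all four series at once — is the real work and is presumably the content of the Section \ref{sec:Pf Charact} generalisation that the paper sets up; once it is in hand, the identification of the limit with the planar master field $\mu_t(n)$ follows by noting that formally setting $g=0$ and $T\to\infty$ in the same formula recovers $\Phi_{\R^2}(s^n)$, and that for $g\ge1$ the surface factor $Z_{(g,1),T-t,G_N}(h)/Z_{g,T,G_N}\to 1$ pointwise in $h$, so the integral converges to $\int \tr(h^n) p_t(h)\,dh$, which is the heat-kernel (hence planar, disc) evaluation — matching \eqref{------eq: Wilson loop} read on the plane. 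Finally I would invoke Theorem \ref{__THM:Hall} to upgrade from $\ell=s^n$ to arbitrary loops included in a disc, thereby resolving Hall's conjecture and, together with the diffeomorphism-invariance and area-preserving-embedding remarks, yielding Theorem \ref{__THM: Disc YMCont}.
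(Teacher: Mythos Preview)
Your starting point is right and matches the paper exactly: the restriction of $\YM_{M,a,G_N}$ to the disc has density $Z_{(g,1),T-t,G_N}(h_{\partial U}^{-1})/Z_{g,T,G_N}$ with respect to the planar measure (this is Lemma~\ref{_____Lem:Density dYM}), and the sup of this density is $Z_{g,T-t,G_N}/Z_{g,T,G_N}$, which is bounded in $N$ by Theorem~\ref{__THM:PF}. From there, however, the paper and you diverge.

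The paper does \emph{not} try to compute the limit of $\E[W_{s^n}]$ by character expansion. It simply writes, for any $\varepsilon>0$,
\[
\PP_{\YM_{M,a}}\bigl(|W_\ell-\Phi_{\tilde M_U,a_U}(\ell)|>\varepsilon\bigr)\ \le\ \frac{Z_{g,T-t,G_N}}{Z_{g,T,G_N}}\ \PP_{\YM_{\tilde M_U,a_U}}\bigl(|W_\ell-\Phi_{\tilde M_U,a_U}(\ell)|>\varepsilon\bigr),
\]
and invokes the \emph{already known} planar convergence (Theorem~\ref{__THM: Plane}, due to \cite{Xu,AS,Lev}) to send the right-hand side to zero. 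This works verbatim for any loop $\ell$ in the disc, not just $s^n$, so Hall's conditional theorem (your last step) is never needed; the paper stresses this explicitly.

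Your ``analytic heart'' has a genuine gap. You assert that $Z_{(g,1),T-t,G_N}(h)/Z_{g,T,G_N}\to 1$ pointwise in $h$, and then pass the limit through $\int \tr(h^n)\,p_t(h)\,dh$. The pointwise claim is \emph{false for $g=1$}: at $h=1$ the ratio equals $Z_{1,T-t,G_N}/Z_{1,T,G_N}$, and by Theorem~\ref{__THM:PF} this converges (for $\U(N)$) to $\theta(q_{T-t})\phi(q_T)^2/\bigl(\theta(q_T)\phi(q_{T-t})^2\bigr)\neq 1$. Your heuristic that ``the exponent $2-2g\le 0$ makes the trivial representation dominate'' breaks down precisely at $g=1$, where the exponent on $d_\lambda$ in \eqref{------eq:fourier_pf} is zero. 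Even for $g\ge 2$, where the pointwise limit is indeed $1$, exchanging the limit with the integral against $p_t(h)\,dh$ is not justified by what you wrote: $p_t$ has no $N$-uniform dominating function, and you have not isolated what replaces dominated convergence. The paper sidesteps all of this by never separating the density from the planar measure --- it transfers convergence in probability wholesale through the bounded Radon--Nikodym derivative.

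In short: keep your density formula and the bound $\|Z_{(g,1),u}\|_\infty=Z_{g,u}$, drop the character-expansion computation of the limit, and instead compare probabilities directly to the planar Yang--Mills measure, citing the planar master field as a black box.
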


Our main result implies that this conjecture holds true for any closed surface $\Sigma$ and any group series of classical groups.  We simultaneously prove  Theorem \ref{__THM:Hall}
 without using the Makeenko--Migdal 
equations. Instead we shall use the convergence of the partition of function; a result proved by the first author in \cite{DN} for unitary groups that we recall and 
generalise to other group series in section \ref{sec:PF}.  Remarkably the limit in the above conjecture is given in terms of the master field on the plane as follows. 

Assume that $\ell$ is a loop of an area 
weighted  map $(M,a)$ included in a closed disc $U$ of $\Sigma.$  Considering only vertices, edges and area-weighted faces of $M$ which are mapped by the embedding of $M$ in $U,$ and replacing all 
faces intersecting 
$\Sigma\setminus U$ by a single marked face yields\footnote{For instance, in Fig. \ref{fig:exemple}, we consider a map $M$ with $3$ faces embedded in the torus, and in Fig. \ref{fig:exemple2}, an additional embedded disc $U$.  The resulting map $\tilde M_U$  with $2$ vertices, two edges and $2$ bounded faces $F_1,F_2,$ is drawn in Fig. \ref{fig:exemple3}.} an area-weighted map $(\tilde M_{U},a_{U})$ of $\R^2$.  
%Assigning the area $a_f$ to each face  $f$ included in $U$  yields an area-weighted  map $(\tilde M_{U},a_{U})$ of $\R^2.$
We then denote  by the same symbol the loop of $\tilde M_U$ obtained by concatenating the edges of $\ell.$

\begin{thm} \label{__THM:Disc dYM}Consider an area-weighted topological map $(M,a)$ on a closed surface $\Sigma$ of genus $g\ge 1$ and total volume $T.$ Assume that $\ell$ is a loop  of $(M,a)$ included in a disc $U$ of $M.$
%$ with 
%$\|a_U\|_1 <\|a\|_1.$
 Then for any classical group $G_N$ of size $N$, under $\YM_{M,a,G_N},$
\[
W_\ell \to \Phi_{\tilde M_U,a_U}( \ell)\text{ in probability as } N\to\infty,
\]
uniformly in $a\in \Delta_M(T).$
\end{thm}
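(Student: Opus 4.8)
The plan is to reduce the statement on the surface $\Sigma$ to a statement about the Yang--Mills measure on the planar map $(\tilde M_U, a_U)$, where Theorem~\ref{__THM: Plane} already gives the conclusion, and to control the error by the convergence of partition functions. The starting point is the disintegration formula for the Yang--Mills measure along a simple loop, equation~\eqref{eq:decondition}, combined with the fact that cutting $\Sigma$ along a suitable simple loop produces surfaces of smaller genus whose partition functions are understood. Concretely, I would first enlarge $M$ (using the compatibility Lemma~\ref{_____Lem: Compat}, which leaves the law of $W_\ell$ unchanged) so that the boundary $\partial U$ of the disc $U$ containing $\ell$ is realised as a simple loop $s$ of the (refined) map, bounding on one side a disc $D \supset \ell$ of total area $s_0 := a_U(\text{bounded faces})$, and on the other side a genus-$g$ surface with one boundary component and area $T - s_0$. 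Call $t \in G_N/\mathrm{Ad}$ the holonomy around $s$.

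Next, I would apply the splitting/disintegration: conditionally on $H_s \in t$, the restriction of the field to $D$ is a Yang--Mills measure on a disc with boundary condition $t$, and $\ell$ lies entirely inside $D$. Writing $W_\ell$ as an integral over $t$ against the density $Z_{\Sigma,G_N}(s;t)/Z_{g,T,G_N}$ (this is exactly \eqref{-----eq:deconditionCont}), the key point becomes: the conditional law of $W_\ell$ given $\{H_s \in t\}$ is the law of $W_\ell$ under the planar disc measure $\YM_{\tilde M_U, C_{s\mapsto t}, a_U}$, whose expectation and variance are controlled by Theorem~\ref{__THM: Plane} and tend, for $t$ near the identity, to $\Phi_{\tilde M_U,a_U}(\ell)$. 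So it remains to show that the disintegrating measure on $G_N/\mathrm{Ad}$ — proportional to $Z_{(g,1),T-s_0,G_N}(t)\, p_{s_0}(\text{stuff})/Z_{g,T,G_N}$ — concentrates at the identity conjugacy class as $N\to\infty$. This is precisely where the convergence of the partition function (the result generalising \cite{Gur,Lem,DN} recalled in Section~\ref{sec:PF}) enters: one shows $Z_{(g,1),s,G_N}(h)/Z_{(g,1),s,G_N}(1) \to 1$ uniformly on a neighbourhood of $1$ and that the contribution of $h$ bounded away from $1$ is negligible, using that genus $g \ge 1$ makes the one-holed-surface partition function behave like the closed one up to the heat-kernel factor $p_{T}(1)$ localising $t$ at the identity.

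To turn concentration of the disintegration measure plus convergence of the conditional Wilson loop into the desired conclusion, I would bound
\[
\bigl|\E_{\YM_{M,a,G_N}}[W_\ell] - \Phi_{\tilde M_U,a_U}(\ell)\bigr|
\le \int_{G_N/\mathrm{Ad}} \bigl|\E_{\YM_{\tilde M_U,C_{s\mapsto t},a_U}}[W_\ell] - \Phi_{\tilde M_U,a_U}(\ell)\bigr|\, \nu_N(dt),
\]
where $\nu_N$ is the disintegration probability measure, and similarly for the second moment $\E[W_\ell \overline{W_\ell}]$ to get $\mathrm{Var}(W_\ell)\to 0$ as in the remark following Conjecture~\ref{_________Conj: Singer}; here I also use $|W_\ell|\le 1$ to dominate the integrand uniformly. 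The integrand is small when $t$ is close to $1$ by continuity of the planar master field in the boundary condition together with Theorem~\ref{__THM: Plane} (the boundary condition $t=1$ case), and the mass $\nu_N$ puts far from $1$ vanishes; splitting the integral accordingly gives the bound. Uniformity in $a\in\Delta_M(T)$ follows because all the estimates — the partition function asymptotics and the planar convergence — are uniform over compact sets of area parameters with fixed total area, which is the content of the uniformity clause in Theorem~\ref{__THM: Plane} and of the partition function statement in Section~\ref{sec:PF}.

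\textbf{Main obstacle.}
The delicate step is the concentration of the disintegration measure $\nu_N$ at the identity conjugacy class, uniformly in the area split, together with the interchange of this concentration with the (only locally uniform) convergence of the conditional Wilson loops: one must ensure the two limits cooperate, i.e.\ that the region of $G_N/\mathrm{Ad}$ where the Wilson-loop approximation is not yet good carries $\nu_N$-mass tending to $0$ fast enough, and conversely that near the identity the planar master field with boundary condition $t$ is genuinely close to the one with trivial boundary condition. This requires quantitative control of the ratio of partition functions $Z_{(g,1),s,G_N}(h)/Z_{(g,1),s,G_N}(1)$ which is exactly the new harmonic-analytic input generalised from \cite{Gur,Lem}, and checking that the genus-$g\ge 1$ hypothesis is what kills the non-trivial characters in the expansion.
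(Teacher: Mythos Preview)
Your strategy has a genuine error at its core. You assert that the disintegration measure $\nu_N(dt)$, proportional to $Z_{(g,1),T-s_0,G_N}(t)\, p_{s_0}(t)/Z_{g,T,G_N}$, concentrates at the identity conjugacy class as $N\to\infty$. This is false. The factor $p_{s_0}$ is the heat kernel at a \emph{fixed positive} time $s_0=|a_U|$; for $G_N=\U(N)$, the empirical spectral distribution of Brownian motion at time $s_0>0$ converges to the nontrivially supported law of free unitary Brownian motion, not to a Dirac mass at $1$. The remaining factor $Z_{(g,1),T-s_0}(t)/Z_{g,T}$ is bounded above by $Z_{g,T-s_0}/Z_{g,T}$ but cannot produce concentration. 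So $\nu_N$ has a genuinely spread-out limit, and you would need the conditional Wilson loop on the disc with boundary condition $t$ to converge to $\Phi_{\tilde M_U,a_U}(\ell)$ for $\nu_N$-typical $t$, an object the paper never introduces and which you give no argument for.

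The paper's argument bypasses all of this and is much simpler. By Lemma~\ref{_____Lem:Density dYM}, the push-forward $(\mathscr{R}_{\tilde M_U}^{M})_*(\YM_{M,a,G_N})$ has density $Z_{(g,1),u,G_N}(h_{\partial U}^{-1})/Z_{g,T,G_N}$ with respect to the \emph{unconditional} planar measure $\YM_{\tilde M_U,a_U,G_N}$, where $u=T-|a_U|$. Since $|\chi_\lambda|\le d_\lambda$, this density is bounded by $Z_{g,u,G_N}/Z_{g,T,G_N}$, which is uniformly bounded in $N$ by Theorem~\ref{__THM:PF}. Bounded Radon--Nikodym derivative transfers convergence in probability directly:
\[
\PP_{\YM_{M,a}}\bigl(|W_\ell - \Phi_{\tilde M_U,a_U}(\ell)|>\varepsilon\bigr)\;\le\;\frac{Z_{g,u,G_N}}{Z_{g,T,G_N}}\,\PP_{\YM_{\tilde M_U,a_U}}\bigl(|W_\ell - \Phi_{\tilde M_U,a_U}(\ell)|>\varepsilon\bigr),
\]
and the right-hand side tends to zero by Theorem~\ref{__THM: Plane}. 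No disintegration over $t$, no concentration, no boundary-conditioned master field is needed: the planar convergence already concerns the unconditional disc measure, and a uniform density bound suffices. Finally, the uniformity over $a\in\Delta_M(T)$ is \emph{not} automatic from uniformity of the partition-function limits (the ratio $Z_{g,u}/Z_{g,T}$ blows up as $u\downarrow 0$); the paper handles it separately by an edge-deformation estimate, bounding $\E_{\YM_{M^r,a^r}}|W_\ell-W_{\ell^r}|$ by $K|a-a'|^{1/2}$ via a Brownian-motion increment on a small auxiliary face. Your sketch does not address this point.
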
 

\begin{ex} In particular, if $\ell=s^n$ where $s$  is a simple contractible loop enclosing an area $t\in( 0,T],$ then  
$$W_\ell\to \frac{e^{-\frac {nt }2}}{n}\sum_{m=0}^{n-1} \frac{(-nt)^m}{m!} {n \choose m+1} \text{ in probability as } N\to\infty. $$
\end{ex}
Using a continuous construction and uniformity estimates obtained in \cite{Lev2}, this result can be generalised  as follows to a continuous setting allowing to consider a much wider family of loops.
\begin{thm} \label{__THM: Disc YMCont}Assume that $\Sigma$ is a closed, connected, orientable Riemann surface with Riemannian volume $\mathrm{vol}$,   $\Psi:U\to D_R=\{x\in \R^2:x_1^2+x_2^2<R\}$ is a diffeomorphism, where  $\pi R^2<T,$  $U$ is an 
open set of $\Sigma$ and 
$\Psi_*(\mathrm{vol}_{|U})$ is the Lebesgue measure. Then, for any $\ell\in \Ld(U),$ and any classical group $G_N$ of size $N$, under $\YM_{\Sigma,G_N},$ 
\[
W_\ell\to \Phi_{\R^2} (\Psi(\ell)) \text{ in probability as } N\to\infty.
\]
Assuming furthermore that  $\Psi^{-1}:D_{R}\to  U$ can be extented continuously to $\overline{D}_R,$ with piecewise continuous derivatives, for any loop $\ell\in \Ld(\overline{D}_R),$
\[
W_{\Psi^{-1}(\ell)}\to \Phi_{\R^2} (\ell) \text{ in probability as } N\to\infty.
\]
 \end{thm}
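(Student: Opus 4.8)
The strategy is to deduce the continuous statement from the discrete one, Theorem \ref{__THM:Disc dYM}, by a two-step approximation: first approximate an arbitrary $\ell\in\Ld(U)$ by loops traced in a fine rectifiable topological map of $\Sigma$ adapted to $U$, then push the discrete convergence through, using that the limiting master field on the plane is continuous with respect to the loop distance $d$. Concretely, given $\ell\in\Ld(U)$, I would first choose a rectifiable topological map $M$ on $\Sigma$ such that the drawing of $\ell$ is a concatenation of edges of $M$ and such that $U$ contains a subcomplex of $M$ of the kind used to define $(\tilde M_U,a_U)$; by the standard graph-approximation results for the Yang--Mills holonomy field (part 3 of the boxed theorem of \cite{Lev2}, together with Lemma \ref{_____Lem: Compat}), $W_\ell$ under $\YM_{\Sigma,G_N}$ has the same law as under $\YM_{M,\mathrm{vol},G_N}$, and a general $\ell\in\Ld(U)$ is approximated in $d$-distance by such combinatorial loops with $W$ converging in $L^1$ uniformly in $N$ (this uniformity is exactly the content of the uniformity estimates from \cite{Lev2} that the statement invokes). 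Theorem \ref{__THM:Disc dYM} then gives $W_\ell\to\Phi_{\tilde M_U,a_U}(\ell)$, and it remains to identify this limit with $\Phi_{\R^2}(\Psi(\ell))$.

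For that identification I would use the hypothesis that $\Psi_*(\mathrm{vol}_{|U})$ is Lebesgue measure: this means the area-weighted map $(\tilde M_U,a_U)$ of $\R^2$ is, up to an area-preserving homeomorphism, the image under $\Psi$ of the corresponding subcomplex of $M$; hence the loop $\ell$ of $\tilde M_U$ and the loop $\Psi(\ell)$ of the plane are related by an area-preserving homeomorphism of $\R^2$. Since $\Phi_{\R^2}$ is invariant under area-preserving diffeomorphisms (Conjecture/Theorem statement for the plane, Theorem \ref{__THM: Plane}, and the invariance clause of the boxed theorem), and since $\Phi_{\tilde M_U,a_U}(\ell)=\Phi_{\R^2}(\ell)$ by the very definition of the master field on the plane restricted to a finite map, we get $\Phi_{\tilde M_U,a_U}(\ell)=\Phi_{\R^2}(\Psi(\ell))$. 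Combining with the previous paragraph yields the first assertion. One should be mildly careful that $\Psi$ is only a diffeomorphism $U\to D_R$, not of all of $\R^2$; but since $\ell$ (or its approximants) stay in a compact subset of $U$, one can extend $\Psi$ to an area-preserving diffeomorphism of $\R^2$ off a neighbourhood of that compact set, which is enough because $\Phi_{\R^2}$ only depends on the loop.

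For the second assertion, with $\Psi^{-1}$ extended continuously to $\overline D_R$ with piecewise continuous derivatives, a loop $\ell\in\Ld(\overline D_R)$ may touch $\partial D_R$, i.e. $\Psi^{-1}(\ell)$ may touch $\partial U$. Here I would approximate $\ell$ from inside by loops $\ell_\varepsilon\in\Ld(D_{R'})$, $R'<R$, obtained by a mild radial contraction; the piecewise-$C^1$ control on $\Psi^{-1}$ ensures $d(\Psi^{-1}(\ell_\varepsilon),\Psi^{-1}(\ell))\to0$ and that the enclosed areas converge, while $d(\ell_\varepsilon,\ell)\to0$ gives $\Phi_{\R^2}(\ell_\varepsilon)\to\Phi_{\R^2}(\ell)$ by continuity of the planar master field. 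Apply the first assertion to each $\ell_\varepsilon$, then pass to the limit using once more the uniform-in-$N$ $L^1$-continuity of $W_{\Psi^{-1}(\cdot)}$ from the continuity properties of the holonomy field.

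The main obstacle is the \emph{uniformity in $N$} of these approximation steps: to interchange the limits $N\to\infty$ and $\varepsilon\to0$ (or the graph-refinement limit) I need $\sup_N\,\E\big[\,|W_{\ell}-W_{\ell'}|\,\big]$ to be controlled by $d(\ell,\ell')$ independently of $N$. On the plane such estimates are classical (Lévy's construction, \cite{Lev2}), but transporting them to a surface $\Sigma$ — especially near $\partial U$ in the second assertion, where the derivative of $\Psi$ degenerates in a merely piecewise-continuous way — requires some care; everything else is a matter of bookkeeping with maps and area-preserving homeomorphisms.
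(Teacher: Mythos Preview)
Your proposal is correct in outline but takes a slightly different route for the first assertion than the paper does.

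The paper works directly at the continuous level: using the splitting property (axiom (A3) of Markovian holonomy fields in \cite{Lev2}) together with Lemma \ref{_____Lem:Density dYM}, it shows that the push-forward $\Psi_*(\YM_{\Sigma,G_N})$ of the continuous Yang--Mills field restricted to $\mathrm{P}(U)$ is absolutely continuous with respect to the continuous planar measure $\YM_{D_R,G_N}$, with density $Z_{(g,1),T-\mathrm{vol}(U),G_N}(H_{\partial D_R})/Z_{g,T,G_N}$. The same uniform bound on this density (via \eqref{------eq:Bound Dens gCap} and Theorem \ref{__THM:PF}) then gives the first claim for \emph{every} $\ell\in\Ld(U)$ at once, by the identical absolute-continuity argument as in the discrete proof---no loop approximation is needed. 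Your route instead goes through the discrete Theorem \ref{__THM:Disc dYM} and then approximates general loops by combinatorial ones, which forces you to establish the uniform-in-$N$ $L^1$-continuity of $W_\ell$ on $\Sigma$ already for the first assertion. Both work; the paper's route is shorter precisely because it postpones any approximation argument to the second assertion, and even there reduces the needed estimate to the planar one (from \cite{CDG}) via the bounded density. Note, incidentally, that with the continuous absolute continuity in hand, your uniform-in-$N$ estimate on $\Sigma$ for loops in $U$ is itself an immediate consequence of the planar one, so the obstacle you flag is less severe than it appears.

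For the second assertion your inside approximation and the paper's piecewise-geodesic approximation are essentially the same idea.
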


Note  that a simple loop is included in a disc if and only if\footnote{Let us recall an argument for the converse statement.  Consider the surface $\Sigma'$ obtained by cutting $\Sigma$ along $\ell.$  It has either one or two connected components. If $\Sigma'$ is connected, then, according to \cite[Sect. 6.3.1]{Sti} or \cite[Sect. 1.3.1]{FM},  we can assume that $\ell$ is one of the generators of the fundamental group of $\Sigma.$ Hence $\ell$ is not contractible. If $\Sigma'$ has two connected components $\Sigma_1$ and $\Sigma_2,$ we want to show that both have 
zero genus.  Would both of them have positive genus, then $\ell$ would not be contractible in either of them.  By  Seifert--van Kampen theorem, it would follow that $\ell$ is not contractible in $\Sigma$.} it is contractible. Our second result below is concerned with simple loops $\ell$ which are not contractible.

\begin{thm} \label{__THM:simple non-contrac} Assume that $\Sigma$ is a closed, connected Riemann surface of genus $g\geq 1$, with Riemannian volume $\mathrm{vol}$.   Then, for all  simple, non-contractible loop $\ell$ on $\Sigma$, for any  $k\in \Z^*$ and any classical group $G_N$ of size $N$, under $\YM_{\Sigma,G_N}$,
\[
W_{\ell^k} \to0 \text{ in probability as } N\to\infty. 
\]
\end{thm}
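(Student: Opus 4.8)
\textbf{Proof plan for Theorem \ref{__THM:simple non-contrac}.}
The plan is to exploit the relation between the Yang--Mills measure with a constraint along a simple loop and the product of two partition functions associated to the pieces obtained by cutting the surface, combined with the convergence of the partition function (the result of Section \ref{sec:PF} generalising \cite{Gur,Lem}). First I would reduce to a combinatorial setting: choose a rectifiable topological map $M$ on $\Sigma$ having $\ell$ (or rather $\ell^k$, after noting $\ell^k$ traces the same edges as $\ell$) as a simple loop, and use Lemma \ref{_____Lem: Compat} so that it suffices to prove the convergence under $\YM_{M,\mathrm{vol},G_N}$. Since $\ell$ is simple and non-contractible, cutting $\Sigma$ along $\ell$ yields either a connected surface of genus $g-1$ with two boundary components, or two surfaces of genera $g_1,g_2$ with $g_1+g_2=g$, each with one boundary component; in all cases the total area is $T$ and neither piece is a disc. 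Using the disintegration formula \eqref{eq:decondition} and the identification of the constrained measure with boundary conditions (Theorem \ref{thm:split_separ}, referenced in the remark), the law of $h_\ell$ under $\YM_{M,\mathrm{vol},G_N}$ has an explicit density with respect to Haar measure on $G_N/\mathrm{Ad}$, proportional to $Z_{\Sigma\setminus\ell}(t,t^{-1})/Z_{g,T}$ (connected case) or $Z_{(g_1,1),T_1}(t)\,Z_{(g_2,1),T_2}(t^{-1})/Z_{g,T}$ (disconnected case), where $t$ is the conjugacy class of $h_\ell$.

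Next I would express these partition functions via the character expansion recalled in Section \ref{sec:Pf Charact}: $Z_{(g,p),T}(t_1,\dots,t_p)=\sum_{\lambda}(\dim\lambda)^{2-2g-p}e^{-\frac{c_2(\lambda)T}{2}}\prod_i\frac{\chi_\lambda(t_i)}{\dim\lambda}$ (with the appropriate normalisation of the Casimir $c_2$), so that $\E[\tr(h_{\ell^k})]=\E[\tr(h_\ell^k)]$ becomes a ratio of such sums against $\tr$ of the $k$-th power. The key point is that $W_{\ell^k}=\tr(h_\ell^k)$ and $h_\ell\in G_N$ is, conditionally on its conjugacy class, Haar-distributed in that class; so $\E[W_{\ell^k}]=\frac{1}{Z_{g,T}}\sum_\lambda (\dim\lambda)^{\chi_0}e^{-\frac{c_2(\lambda)T}{2}}\,\big(\text{something}\big)$, where the ``something'' involves $\int_{G_N}\tr(g^k)\,\frac{\chi_\lambda(g)}{\dim\lambda}\,\frac{\chi_\mu(g^{-1})}{\dim\mu}\,dg$ type integrals (coming from the two boundary factors), which are controlled by the fusion/Clebsch--Gordan coefficients for tensoring with the defining representation and its conjugate. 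The convergence of the partition function tells us $Z_{g,T,G_N}\to$ a finite nonzero limit (for $g\ge1$), and more precisely that the normalised sum over $\lambda$ concentrates; I would then show the numerator is $o(1)$, essentially because inserting $\tr(g^k)$ forces a shift of the highest weight by a box-configuration of total size $\ge 1$ away from $\lambda$, and the dimension factors $(\dim\lambda)^{2-2g-p}$ together with $1/\dim$ normalisations produce a factor tending to $0$ as $N\to\infty$ (the dimension of an irreducible grows polynomially in $N$ of degree equal to the number of boxes, so a mismatch of representations across the cut costs at least one power of $N$).

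For the variance, $\mathrm{Var}(W_{\ell^k})=\E[|\tr(h_\ell^k)|^2]-|\E[\tr(h_\ell^k)]|^2$, I would argue the first term is itself $o(1)$ by the same mechanism: $\E[\tr(g^k)\overline{\tr(g^k)}]$ against the conditional Haar measure on a conjugacy class, integrated against the partition-function weights, again forces nontrivial representation content across the cut and hence a vanishing dimension ratio; this is in fact cleaner than the mean estimate since we do not need a cancellation, only an upper bound. Combining, $\E[|W_{\ell^k}-0|]^2\le \E[|W_{\ell^k}|^2]\to 0$, which gives convergence in probability to $0$.

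\emph{Main obstacle.} The delicate step is the uniform control, as $N\to\infty$, of the ratio $\big(\sum_\lambda \text{numerator terms}\big)/Z_{g,T,G_N}$: one must show not merely that each term is small but that the whole (infinite) sum over irreducibles is small, which requires the quantitative tail estimates on $\sum_\lambda (\dim\lambda)^{2-2g}e^{-\frac{c_2(\lambda)T}{2}}$ that underlie the convergence of the partition function, applied simultaneously to all four group series and uniformly over the area split $T=T_1+T_2$ (and over $a\in\Delta_M(T)$ if one wants the uniform statement). Handling the orthogonal and symplectic cases — where self-dual representations, the factor $\beta$, and the relation between $G_N/\mathrm{Ad}$ and the Weyl integration formula differ from the unitary case — is where the bulk of the technical work will lie, but it is exactly the content of the generalised partition-function estimates of Section \ref{sec:PF}, on which I would lean.
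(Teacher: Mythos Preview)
Your overall framework --- cut along $\ell$, identify the law of $H_\ell$ via the constrained/boundary partition functions, expand in characters, and control everything through the partition-function asymptotics of Section~\ref{sec:PF} --- is the right one and is what the paper does. But the paper organises the argument differently, and your heuristic for \emph{why} the numerator is small is off in an important way.

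\medskip
\textbf{Where the paper is simpler than your plan.} For a \emph{separating} simple loop, and for a \emph{non-separating} simple loop when $g\ge 2$, the paper does not compute moments via Clebsch--Gordan at all. It simply bounds the density of $H_\ell$ with respect to Haar measure uniformly: using $\|\chi_\lambda\|_\infty=d_\lambda$, one gets
\[
\Big\|\tfrac{Z_{(g_1,1),T_1}(\,\cdot\,)Z_{(g_2,1),T_2}(\,\cdot^{-1}\,)}{Z_{g,T}}\Big\|_\infty\le \frac{Z_{g_1,T_1}Z_{g_2,T_2}}{Z_{g,T}}
\quad\text{and}\quad
\Big\|\tfrac{\varphi_{g,T}}{Z_{g,T}}\Big\|_\infty=\frac{Z_{g-1,T}}{Z_{g,T}},
\]
and Theorem~\ref{__THM:PF} makes both ratios bounded in $N$ (since $g_1,g_2\ge 1$, resp.\ $g-1\ge 1$). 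Then one invokes Diaconis--Evans: for Haar-distributed $H$, $\tr(H^k)\to 0$ in probability. Absolute continuity with bounded density transfers this to $\YM$. No character gymnastics are needed here.

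\medskip
\textbf{The case you did not single out, and where your heuristic breaks.} The delicate case is $g=1$, non-separating. There the density bound is $Z_{0,T}/Z_{1,T}$, which \emph{diverges} (the sphere partition function blows up), so absolute continuity fails. This is precisely where the paper does what you propose: compute $\E[W_{\ell^k}]$ and $\E[|W_{\ell^k}|^2]$ directly. The key input is a Pieri-type lemma: writing $\Tr(g^k)\chi_\lambda(g)=\sum_\mu c_{\lambda,k}^\mu\chi_\mu(g)$, one has $c_{\lambda,k}^\mu\in\{-1,0,1\}$ and $\sum_\mu|c_{\lambda,k}^\mu|\le n$. Your claim that ``inserting $\tr(g^k)$ forces a shift of the highest weight away from $\lambda$'' is wrong for types $B,C,D$: for $B_r$ one has $c_{\lambda,k}^\lambda=1$, so $\lambda$ appears in its own expansion. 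The gain is \emph{not} a dimension mismatch; it is simply that
\[
n\,\E[W_{\ell^k}]=\frac{1}{Z_{1,T}}\sum_\lambda e^{-\frac{T}{2}c_\lambda}\,c_{\lambda,k}^\lambda,
\qquad
n^2\,\E[|W_{\ell^k}|^2]=\frac{1}{Z_{1,T}}\sum_\lambda e^{-\frac{T}{2}c_\lambda}\sum_\mu (c_{\lambda,k}^\mu)^2,
\]
and the coefficient bounds give $|\E[W_{\ell^k}]|\le 1/n$ and $\E[|W_{\ell^k}|^2]\le 1/n$. The factor of $1/n$ comes from the normalised trace, not from a representation-theoretic shift.

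\medskip
\textbf{Summary.} Your plan would likely go through if you carried it out carefully, but you should (i) separate the three regimes (separating; non-separating $g\ge 2$; non-separating $g=1$), (ii) use the much shorter absolute-continuity argument in the first two, and (iii) in the third, replace the ``forced shift'' heuristic by the Pieri lemma and the resulting $1/n$ bound. Your identification of the partition-function convergence as the main technical input is correct for the first two regimes; in the third regime the argument is essentially self-contained and does not even use Theorem~\ref{__THM:PF}.
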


%\begin{thm} Assume that $G=U(N)$ and   $\ell$ is a simple non-separating loop of an area-weighted map $(M,a)$.   Then, for any integer $n\not=0,$ under $\YM_{M,a},$ as $N\to \infty,$
%$$W_\ell \to0 \text{ in probability}. $$
%\end{thm}

Our result covers all simple loops on closed, connected, orientable surfaces. We further believe it holds true for non-orientable surfaces. In a sequel to the current work \cite{DL}, we 
investigate the 
master field question for all loops with self-intersections.

\subsubsection{Example and idea of proof  for Theorem \ref{__THM:Disc dYM}}\label{sec:example}

In this section we illustrate the key objects, namely the discrete Yang--Mills measure, the Wilson loops and the master field, with an explicit example on a torus of area $T$. We then give an idea of the proof of Theorem \ref{__THM:Disc dYM} for this example. 

\begin{figure}[!h]
\begin{center}
\includegraphics[scale=1]{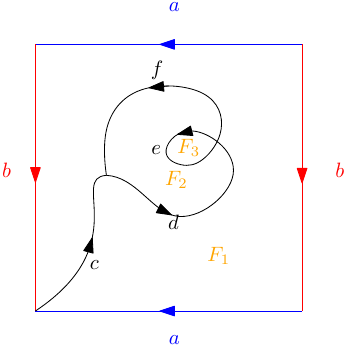}
\caption{An example of area-weighted map on a torus.}\label{fig:exemple}
\end{center}
\end{figure}

Consider the area-weighted map $(M,a)$ in Fig. \ref{fig:exemple}. It has $3$ vertices, $6$ edges labelled $a,b,c,d,e,f$, and $3$ faces whose boundaries are described clockwise by $\partial F_1=b^{-1}a^{-1}bacdfc^{-1}$, $\partial F_2=f^{-1}ed^{-1}$ and 
$\partial F_3 = e^{-1}$. Denote by $a_i$ the weight of the face $F_i$, $i\in\{1,2,3\}$. Denote respectively by $x_1,\ldots,x_6$ the values of the  edge variables $h_a,\ldots,h_f$. The unconditional Yang--Mills measure on $(M,a)$ is given by
\begin{align}
\begin{split}
d\YM_{M,a,G}(x_1,\ldots,x_6)=&\frac{1}{Z_{M,a,G}}p_{a_1}(x_3^{-1}x_6x_4x_3[x_1,x_2])\\
&\times p_{a_2}(x_4^{-1}x_5x_6^{-1})p_{a_3}(x_5^{-1})dx_1\cdots dx_6.
\end{split}
\end{align}
The unconstrained partition function $Z_{M,a,G}$ is given by
\begin{equation}
Z_{M,a,G}=Z_{1,T,G}=\int_{G^2}p_{T}([x,y])dxdy,
\end{equation}
and the partition function $Z_{M,a,G}(b;[y]) $ for the Yang-Mills measure with constraint $C_{b\mapsto [y]},$ where $y\in G,$ is
\begin{equation}
Z_{M,a,G}(b;[y]) =\int_{G}p_{T}([x,zyz^{-1}])dx=\int_{G}p_{T}([x,y])dx.
\end{equation}
Consider the area weighted map $(M^*,a^*)$ with one boundary component, which is identical to $M$ but with  $F_2$ and $F_3$ removed. Then for all $x\in G,$
\begin{align*}
Z_{M^*,a^*,G}(x)&=\int_{G^5}  p_{a_1}(x_3^{-1}x_4x_6 x_3[x_1,x_2]) dx_1dx_2dx_3\delta_{[x]}(dx_4dx_6) \\
&=\int_{G^2}p_{a_1}(x[x_1,x_2])dx_1dx_2= Z_{(2,1),a_1}(x).
\end{align*}
Lastly, instead of the first map above, consider  an area weighted map $(M,a)$  with an additional edge, labeled $t$, bounding a closed disc $U$ and splitting the face $F_1$ into two faces $F_0$ and $F_1$ as in Fig. \ref{fig:exemple2}. 
The discrete Yang--Mills 
measure is then
\begin{align}
\begin{split}
d\YM_{M,a,G}(x_1,\ldots,x_7)=&\frac{1}{Z_{1,T,G}}p_{a_0}(x_3^{-1}x_7x_3[x_1,x_2])p_{a_1}(x_7^{-1}x_6x_4)\\
&\times p_{a_2}(x_4^{-1}x_5x_6^{-1})p_{a_3}(x_5^{-1})dx_1\cdots dx_7,\label{eq:YMex}
\end{split}
\end{align}
with the same partition function as the initial area-weighted map.

\begin{figure}[!h]
\begin{center}
\includegraphics[scale=1]{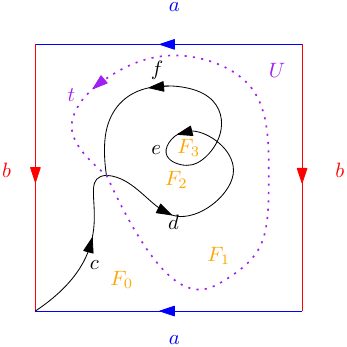}
\caption{An area-weighted map on the torus. The loop $def$ is included in the disc $U$ with boundary $t$.}\label{fig:exemple2}
\end{center}
\end{figure}

Consider the loop $\ell=efg$, included in the disc $U$ whose boundary is $t$. Under $\YM_{M,a,G},$ the Wilson loop $W_\ell$ is a complex random variable whose expectation  is
\begin{align*}
\E[W_\ell]=&\frac{1}{Z_{1,T,G}}\int_{G^7}\tr(x_6x_5x_4)p_{a_0}(x_3^{-1}x_7x_3[x_1,x_2])p_{a_1}(x_7^{-1}x_6x_4)\\
&\hspace{4cm}\times p_{a_2}(x_5^{-1}x_6x_7^{-1})p_{a_3}(x_5^{-1})dx_1\cdots dx_7.
\end{align*}
\begin{figure}[!h]
\begin{center}
\includegraphics[scale=1]{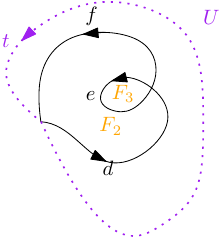}
\caption{The lifting of the loop $\ell$ of Fig. \ref{fig:exemple} in the plane.}\label{fig:exemple3}
\end{center}
\end{figure}
The weighted map $(\tilde M_U,a_u)$ has then only two bounded faces, $F_2$ and $F_3,$ as shown on figure \ref{fig:exemple3}. 
%The lifted loop $\ell$ in $\tilde{M}_U$ is displayed in Fig. \ref{fig:exemple3}. 
 We have, under the corresponding planar Yang--Mills measure $\YM_{\tilde{M}_U,a_U}$,
% &\int_{G^4}\tr(x_6x_5x_4)p_{a_1}(x_7^{-1}x_6x_4)p_{a_2}(x_4^{-1}x_5x_6^{-1})p_{a_3}(x_5^{-1})dx_4\cdots dx_7\\
\begin{align*}
\E[W_\ell]= \int_{G^3}\tr(x_3x_2x_1)p_{a_2}(x_1^{-1}x_2x_3^{-1})p_{a_3}(x_2^{-1})dx_1dx_2dx_3.
\end{align*}
According to Theorem \ref{__THM: Disc YMCont}, the limit of $\E[W_\ell]$ with respect to $\YM_{(M,a)}$ is the same as the one with respect to $\YM_{(\tilde{M}_U,a_U)}$. We can then use the table of the master field on the plane \cite{Lev} to compute $\Phi_{\tilde{M}_U,a_U}(\ell)$, and we obtain that
\[
\Phi_{\tilde{M}_U,a_U}(\ell)=e^{-\frac{a_2}{2}-a_3}(1-a_3).
\]
Let us consider another loop, using that the convergence of Theorem \ref{__THM: Disc YMCont} holds uniformly on  $\Delta_M(T).$ Consider the loop $\g=f^{-1}d^{-1}$ in $M.$   Then under $\YM_{M,a},$
\begin{align*}
\E(W_\g)&= \frac{1}{Z_{1,T,G}}\int_{G^7}\tr((x_6x_4)^{-1})p_{a_0}(x_3^{-1}x_7x_3[x_1,x_2])p_{a_1}(x_7^{-1}x_6x_4)\\
&\hspace{4cm}\times p_{a_2}(x_4^{-1}x_5x_6^{-1})p_{a_3}(x_5^{-1})dx_1\cdots dx_7\\
&= \frac{1}{Z_{1,T,G}}\int_{G^4}\tr((x_6x_4)^{-1})p_{a_0+a_1}(x_6x_4[x_1,x_2]) p_{a_2+a_3}(x_4^{-1}x_6^{-1}) dx_1dx_2dx_4dx_6.
\end{align*}
and 
\begin{align*}
\lim_{a_0,a_1\downarrow 0}\E(W_\g)&= \frac{1}{Z_{1,T,G}}\int_{G^2} \tr([x_1,x_2])p_{T}([x_1,x_2])dx_1dx_2=\E(W_{[a,b]}),
\end{align*}
where the expectation on the right-hand side is with respect to the map with respect to the Yang-Mills measure on the torus of area $T.$ 

According to Theorem \ref{__THM: Disc YMCont} and the above identity, the limit of $\E[W_{\g}]$ with respect to $\YM_{(M,a)}$ is the same as the one of $\E[W_{\g}]$ with respect to $\YM_{(\tilde{M}_U,a_U)}$ as $N\to\infty$ and it holds uniformly in $a\in\Delta_M(T).$ In particular, using the value \cite{Lev} of the planar master field on simple loops, we find
\begin{equation}
\lim_{N\to\infty} \E[W_{[a,b]}]=\lim_{a_0,a_1\downarrow 0,a\in\Delta_M(T)} \lim_{N\to\infty} \E[W_{\g}]=\lim_{a_0,a_1\downarrow 0,a\in\Delta_M(T)}e^{-\frac{a_2+a_3}{2}}=e^{-\frac T 2}.
\end{equation}

\emph{Idea of proof for Theorem  \ref{__THM:Disc dYM} and $\ell$ in figure \ref{fig:exemple}.} Let us give now  an idea of the proof of Theorem \ref{__THM:Disc dYM}  for $\ell=def$ as in figure \ref{fig:exemple}.  Keeping the same notation for the value of the edge variables, the Yang-Mills measure in the area weighted map  $(\tilde{M}_U,a_U)$ is 
\begin{align}
\begin{split}
d\YM_{\tilde{M}_U,a_U,G}(x_4,x_5,x_6)=&p_{a_2}(x_4^{-1}x_5x_6^{-1})p_{a_3}(x_5^{-1})dx_4dx_5 dx_6.\label{eq:YMexcut}
\end{split}
\end{align}
From the expression \eqref{eq:YMex} for the discrete Yang-Mills measure in the map $(M,a)$ of figure \ref{fig:exemple2}, it follows that under $\YM_{M,a,G}$, the random variable $(h_c,h_e,h_f)$  is absolutely continuous  with respect to $\YM_{\tilde{M}_U,a_U,G}$ with density
   \begin{align}
F( x_4,x_5,x_6)&=\frac{1}{Z_{1,T,G}}\int_{G^{4}} p_{a_0}(x_3[x_1,x_2]x_3^{-1}x_7)p_{a_1}(x_7^{-1}x_6x_4)dx_1dx_2 dx_3dx_7\label{eq: Abs Cont Ex}\\
&=\frac{Z_{(1,1),a_0+a_1,G}(x_6x_4)}{Z_{1,T,G}}.
\end{align}
 We conclude that for any $\varepsilon>0,$ $\PP_{\YM_{M,a,G}}( |W_{\ell}-\Phi_{\tilde{M}_U,a_U}(\ell)| >\varepsilon)$ is bounded by
\[ \frac{\|Z_{(1,1),a_0+a_1,G}\|_{\mathrm{L}^{\infty}(G)}}{Z_{1,T,G}}\PP_{\YM_{\tilde{M}_U,a_U,G}}( |W_{\ell}-\Phi_{\tilde{M}_U,a_U}(\ell)| >\varepsilon).  \]
By \cite{Lev}, the last probability vanishes when $G=G_N$ is as in section \ref{sec:Intro Class Gp} and $N\to \infty.$  To conclude, it is enough to bound the ratio uniformly in $N$. This will be a consequence of the observation that 
\begin{equation}
\|Z_{(1,1),a_0+a_1,G_N}\|_{\mathrm{L}^{\infty}(G)}=Z_{(1,1),a_0+a_1,G_N}(1)=Z_{1,a_0+a_1,G_N}
\end{equation}
combined with Theorem \ref{__THM:PF}.  The 
full proof of Theorem
\ref{__THM:Disc dYM} for a fixed area\footnote{It can be shown that the ratio $\frac{\sup_{0<s<T}Z_{1,s,G_N}}{Z_{1,T,G_N}}=\frac{Z_{1,0,G_N}}{Z_{1,0,G_N}}$ diverges as $N\to\infty,$ and  an additional argument 
is required to prove the uniform convergence in $\Delta_M(T)$.} follows this 
argument and is given in section \ref{sec: Loops in a disc}.

\section{Asymptotics of partition functions}\label{sec:PF}

%It can be proved (see \cite{Lev3} for instance) that the Yang--Mills measure is invariant by subdivision. In particular, it appears that the partition function is independent from the choice of a graph, and from now on we will denote it by $Z_{T,G}$, or even $Z_T$ when the group $G$ is unambiguous.
Our argument  relies on the convergence of partition functions of closed orientable surfaces, for classical groups, generalising results of  \cite{Gur} and of the second author 
\cite{Lem}.
This result was mentioned without 
proof in \cite{Rus} for $g>1$ and in \cite{Dou} for $g=1$ and $G=\U(N)$ or $\SU(N)$. For $g\ge 2$, for all group series except the unitary group series, it is a simple consequence 
of 
\cite{Gur}. For $g=1$ it is proved in \cite{Lem} for $\U(N)$ and $\SU(N)$. For all classical groups series, the limit  involves the Jacobi theta function\footnote{The Jacobi theta function only appears for the unitary 
group series.} and the Euler phi 
function, both defined for $q\in\C$ such that $|q|<1$, with
\[
\theta(q)= \sum_{n\in\Z} q^{n^2} \ \text{ and } \ \phi(q)=\prod_{m=1}^\infty (1-q^m).
\]
%
%\begin{thm}\label{__THM:PF0}
%For any $T\geq 0$, $(N,g)\in(\N^*)^2$, let us denote by $Z_{g,T,\U(N)}$ (resp. $Z_{g,T,\SU(N)}$) the Yang--Mills partition function on an orientable compact surface of genus $g$ and total area $T$ with structure group $\U(N)$ (resp. $\SU(N)$). Set $q_T=e^{-\frac{T}{2}}$.
%\begin{enumerate}
%\item If $g=1$ and $T>0$, then the following convergences hold:
%\[
%\lim_{N\to\infty} Z_{T,U(N)} = \frac{\theta(q_T)}{\phi(q_T)^2} \text{ and } \lim_{N\to\infty} Z_{T,SU(N)} = \frac{1}{\phi(q_T)^2}.
%\]
%\item If $g\geq 2$ and $T>0$, then the following convergences hold:
%\[
%\lim_{N\to\infty} Z_{T,U(N)} = \theta(q_T) \text{ and } \lim_{N\to\infty} Z_{T,SU(N)} = 1.
%\]
%Moreover, if $g\geq 2$ and $T=0$ then we have
%\[
%\lim_{N\to\infty} Z_{T,SU(N)}=1.
%\]
%\end{enumerate}
%\end{thm}

%We will extend this result to the other classical groups. 
Let $r\geq 1$ be an integer, we denote respectively by $\tilde A_r, A_r, B_r$, $C_r$, $D_r$ the type of $\U(r)$, $\SU(r+1)$, $\SO(2r+1)$, $\Sp(r)$ or $\SO(2r)$, in reference to the type of their root system (see \cite{BtD} for instance). Note that we will alternatively use $r$ as the rank of the root system or $N$ as the size of the classical group, and their relations will be implicit: for instance, if we consider $\SU(N)$, it will be of type $A_r$ with $r=N-1$. Conversely, if we consider the classical group of type $D_r$, it will be $\SO(2r)$. This change of index may be confusing at first, but it will be helpful in order to deal simultaneously with all considered root systems.  Considering limits when $r\to\infty$ and when $N\to\infty$ is  equivalent.

%\begin{thm}\label{__THM:PF}
%For any $T>0$, $(N,g)\in(\N^*)^2$ and any type $X_N$, with $X\in\{B,C,D\}$, let us denote by $Z_{g,T,X_N}$ the Yang--Mills partition function on an orientable compact surface of genus $g$ and area $T$ with structure group of type $X_N$ (we will also write $Z_{g,T,N}$ when the type of the structure group is unambiguous). Set $q_T=e^{-\frac{T}{2}}$.
%\begin{enumerate}
%\item If $g=1$ and $T>0$, then $Z_{1,T,X_N}$ converges when $N\to\infty$, and the limits are given by
%\begin{equation}
%\lim_{N\to\infty} Z_{1,T,B_N} = \lim_{N\to\infty} Z_{1,T,C_N}=\frac{1}{\phi(q_T)}
%\end{equation}
%and
%\begin{equation}
%\lim_{N\to\infty} Z_{1,T,D_N} = \frac{1+q_T}{(1-q_T)^2\phi(q_T)}.
%\end{equation}
%\item If $g\geq 2$, then for any $X\in\{A,B,C,D\}$ we have
%\begin{equation}
%\lim_{N\to\infty} Z_{g,T,X_N} = 1.
%\end{equation}
%\end{enumerate}
%\end{thm}

\begin{thm}\label{__THM:PF}
For any $T>0$, $(r,g)\in(\N^*)^2$ and any type $X_r$, with $X\in\{B,C,D\}$, let us denote by $Z_{g,T,X_r}$ the Yang--Mills partition function on an orientable compact surface of genus $g$ and area $T$ with structure group of type $X_r$ (we will also write $Z_{g,T,r}$ when the type of the structure group is unambiguous). Set $q_T=e^{-\frac{T}{2}}$. 
\begin{enumerate}
\item For all $g\ge 1$ and $T>0$, $\lim_{r\to \infty} Z_{g,T,X_r}$ exists and is given by the table \eqref{------eq:PFtable} below. 
\item Moreover if  $g=2$, for all $X\in \{A,B,C,D\},$ $Z_{g,T,X_r}=\lim_{T\to 0} Z_{g,T, X_r}$ is well defined and 
$$\lim_{r\to \infty}Z_{g,0,X_r}=1.$$
\end{enumerate}
\end{thm}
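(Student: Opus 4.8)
The plan is to work from the character (heat-kernel) expansion of the partition function recalled in Section \ref{sec:Pf Charact},
\[
Z_{g,T,X_r}=\sum_{\lambda}(\dim\lambda)^{2-2g}\,q_T^{\,c_2(\lambda)},
\]
the sum running over the irreducible representations $\lambda$ of the group of type $X_r$, with $c_2(\lambda)$ the Casimir eigenvalue associated to the metric \eqref{------eq:inner P LieAlg}. Everything then reduces to controlling this series as $r\to\infty$, locally uniformly in $T>0$.

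First I would fix the parametrisation. For $X\in\{B,C,D\}$ the irreducibles split into a \emph{tensor} family indexed by partitions $\mu$ with at most $r$ parts (with a two-fold multiplicity for $D_r$ when $\mu$ has exactly $r$ parts), and, for $B_r$ and $D_r$ only, a \emph{spinorial} family of half-integer highest weights, each of the form $\mu+\varpi$ with $\varpi$ the minimal spin weight; the case $A_r=\SU(r+1)$ needed for the $g=2$, $T=0$ assertion is analogous, with partitions of length $<r+1$ and no spinorial family. Using Weyl's dimension formula and the explicit shape of $\rho$ I would then prove, all uniformly in $r$: (i) for a partition $\mu$, $c_2(\mu)\to\kappa_X\,|\mu|$ as $r\to\infty$ for an explicit type-dependent constant $\kappa_X>0$, together with a uniform lower bound $c_2(\mu)\ge c_X\,|\mu|$ obtained by applying Chebyshev's sum inequality to the similarly-ordered sequences $(\mu_k)_k$ and $(2\rho_k)_k$; (ii) for a spinorial weight, $c_2(\mu+\varpi)\ge c_X\,|\mu|+c'_X\,r$ and $\dim(\mu+\varpi)\ge 2^{c''_X r}$; and (iii) $\dim\mu\ge c\,r$ for every $\mu\ne\varnothing$, and more sharply $\dim\mu\gtrsim r^{|\mu|}/(|\mu|!)^{2}$.

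With these estimates both items follow by dominated convergence for series, all summands being nonnegative. For $g=1$ there is no dimension factor, so every representation must be handled: the uniform bound in (i) gives the summable domination $q_T^{c_2(\mu)}\le(q_T^{c_X})^{|\mu|}$, hence
\[
\sum_{\ell(\mu)\le r}q_T^{c_2(\mu)}\ \longrightarrow\ \sum_{\mu}q_T^{\kappa_X|\mu|}=\prod_{m\ge1}\bigl(1-q_T^{\kappa_X m}\bigr)^{-1}=\phi\bigl(q_T^{\kappa_X}\bigr)^{-1},
\]
while the $D_r$ extra multiplicity (active only when $\ell(\mu)=r$, which forces $|\mu|\ge r$) and the whole spinorial sum are each $O\bigl(q_T^{\,c'_X r}\bigr)\to0$; this gives the $g=1$ row of the table. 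For $g\ge2$ one has $(\dim\lambda)^{2-2g}\le(\dim\lambda)^{-2}\le(cr)^{-2}$ for every nontrivial $\lambda$, which combined with the same domination on $q_T^{c_2}$ yields $\sum_{\lambda\ne0}(\dim\lambda)^{2-2g}q_T^{c_2(\lambda)}=O(r^{-2})\to0$, so $Z_{g,T,X_r}\to1$ (this is also the statement extracted from \cite{Gur}). Finally, for $g=2$ at $T=0$ one first notes that $Z_{2,0,X_r}=\sum_\lambda(\dim\lambda)^{-2}$ is finite for fixed $r$ — the Weyl dimension being a polynomial in the highest weight of degree equal to the number of positive roots, which grows quadratically — so $Z_{2,0,X_r}=\lim_{T\downarrow0}Z_{2,T,X_r}$ by monotone convergence; then, using the sharper bound $\dim\mu\gtrsim r^{|\mu|}/(|\mu|!)^{2}$ to compensate for the loss of the $q_T$-decay, one gets $\sum_{\mu\ne\varnothing}(\dim\mu)^{-2}=O(r^{-2})\to0$, so $Z_{2,0,X_r}\to1$.

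The main obstacle is the $g=1$ case, precisely because of the two features absent from the unitary case of \cite{Lem}: there is no dimension damping to absorb the tail of an infinite series, so one genuinely needs the \emph{uniform-in-$r$} Casimir lower bound of step (i) to pass to the limit termwise; and one must dispose of the spinorial representations, which do not occur for $\U(N)$ or $\SU(N)$ and which are controlled only through the fact that their Casimir grows linearly in $r$. The individual computations are elementary, but carrying the three root systems $B_r,C_r,D_r$ and their $\rho$-vectors (and the $\beta$-dependent scaling of \eqref{------eq:inner P LieAlg}) through in parallel is where the bookkeeping lies; a secondary subtlety is the $T=0$ assertion, where the absence of exponential decay forces one to use the true polynomial growth of $\dim\mu$ in $|\mu|$ rather than the crude bound $\dim\mu\ge cr$.
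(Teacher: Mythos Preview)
Your $g=1$ argument is essentially the paper's: both isolate the leading $|\lambda|$ contribution to the Casimir and bound the remainder via a Chebyshev-type inequality for oppositely ordered sequences (the paper writes it as $\sum_{i,j}(\lambda_i-\lambda_j)(\check\rho_i-\check\rho_j)\le 0$), then pass to the limit over partitions by dominated convergence. Two side remarks. First, the spinorial family is not present: the paper works with $\SO(N)$, not $\mathrm{Spin}(N)$, so the highest weights listed for $B_r$ and $D_r$ are all integral and there is nothing extra to discard. Second, your argument as written yields $\phi(q_T)^{-1}$ for all three types $B,C,D$, whereas the table records $\frac{1+q_T}{(1-q_T)^2\phi(q_T)}$ for $D_r$; you assert you recover the table, but you do not, and you should trace this case explicitly. (In the paper's own $D_r$ computation, the substitution $\tilde\lambda_i=\mu_i+m$ gives $|\tilde\lambda|=|\mu|+(r-1)m$, so the leading exponential should carry $q_T^{(r-1)m}$ rather than $q_T^m$; with that correction the $m>0$ contribution vanishes as $r\to\infty$, consistent with your $\phi(q_T)^{-1}$.)

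For $g\ge 2$ the routes diverge. The paper sandwiches $1\le Z_{g,T,X_r}\le\zeta_{X_r}(2g-2)$ and proves $\zeta_{X_r}(s)\to 1$ for every $s>1$ by the inequality $d_{X_r,\lambda}\ge d_{A_r,\lambda}$ (with a harmless factor $2$ for $D_r$), reducing to the $\SU$ case handled in \cite{Lem}; this treats $T>0$ and $T=0$ at once. Your bootstrap from $g=1$ via $d_\lambda\ge cr$ is fine for $T>0$, since $\sum_{\lambda\ne 0}d_\lambda^{2-2g}q_T^{c_\lambda}\le (cr)^{-2}Z_{1,T,X_r}=O(r^{-2})$. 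But the $T=0$ step has a genuine gap: even granting $\dim\mu\gtrsim r^{|\mu|}/(|\mu|!)^2$ for all $\mu$ with $\ell(\mu)\le r$, the resulting majorant $\sum_{k\ge 1}p(k)(k!)^4 r^{-2k}$ diverges for every fixed $r$ (its $k$-th term behaves like $(k^2/e^2r)^{2k}$), so it cannot give $\sum_{\mu\ne\varnothing}(\dim\mu)^{-2}=O(r^{-2})$. The paper's dimension comparison to type $A$, or a direct appeal to \cite{Gur}, is what closes this.
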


\begin{equation}\label{------eq:PFtable}
\begin{array}{c|ccc}
\text{Type} & \tilde{A}_r & A_r & B_r,C_r,D_r\\
\hline\\
g=1 & \frac{\theta(q_T)}{\phi(q_T)^2} & \frac{1}{\phi(q_T)^2} & \frac{1}{\phi(q_T)}  \\
\\
g\geq 2 & \theta(q_T) & 1 & 1 
\end{array}
\end{equation}

Point 2 was proved for all types except $\tilde A$ in \cite{Gur}, whereas 1 was  proved in \cite{Lem} for group series $A,\tilde A$, proving independently point 2 for types $\tilde A$. Theorem \ref{__THM:PF} will be proved in 
Section 
\ref{sec:PF}. 
Let us also mention that, whereas the partition function on the plane or a disc in the plane is equal to 1 for any group, the case of the sphere behaves very differently, as the partition function goes to zero exponentially fast, at a speed of order $r^2$ (\cite{MonvS}). 
It further displays a phase transition that was discovered by and named after Douglas and Kazakov \cite{DK} and proved in the case of unitary groups by L\'evy and Ma\"ida \cite{LM}.  See also Boutet de Monvel and Shcherbina \cite{BMS}, where 
the convergence of the right-hand-side below was first proven for a more general class of models. 
\begin{thm}[Douglas--Kazakov phase transition]\label{__THM: DK PT}
For any $T\geq 0$, set $Z_{T,N}$ as the Yang--Mills partition function on the sphere of area $T$ with structure group $\U(N)$. Then the quantity
\[
F(T)=\lim_{N\to\infty}\frac{1}{N^2}\log Z_{T,N}
\]
exists. It defines a function $F\in\mathscr{C}^2(\R_+^*)\cap\mathscr{C}^\infty(\R_+^*\setminus\{\pi^2\})$ which admits a third-order jump at the area $\pi^2$.
\end{thm}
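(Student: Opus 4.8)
The plan is to reduce the statement to a harmonic-analysis formula for $Z_{T,N}$ and then run a large-deviations analysis of the resulting character sum. Applying the Driver--Sengupta/Migdal formula to the one-vertex, no-edge, one-face map on $\Sbb^2$ (equivalently $p_T*p_T(1)=\sum_\lambda(\dim\lambda)^2 e^{-\frac T2 c_2(\lambda)}$, with the metric scaling fixed in \eqref{------eq:inner P LieAlg}), one gets
\[
Z_{T,N}=\sum_{\lambda}(\dim\lambda)^2\, e^{-\frac{T}{2N}\kappa(\lambda)},
\]
where $\lambda=(\lambda_1\ge\cdots\ge\lambda_N)\in\Z^N$ runs over highest weights of $\U(N)$, $\dim\lambda=\prod_{i<j}\frac{\ell_i-\ell_j}{j-i}$ with $\ell_i=\lambda_i+N-i$, and $\kappa(\lambda)=\sum_i(\ell_i-\bar\ell)^2$ up to an $N$-dependent additive constant. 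First I would rescale $x_i=\ell_i/N$: the constraint that the $\ell_i$ are \emph{strictly} decreasing integers reads $x_i-x_j\ge 1/N$, so in the limit the empirical measures $L_N=\frac1N\sum_i\delta_{x_i}$ live on the set $\mathcal K$ of probability measures on $\R$ with density bounded by $1$; after collecting the powers of $N$ (the superfactorial $\prod_{i<j}(j-i)^2$ cancelling the spurious $N^2\log N$ term coming from the Vandermonde), $Z_{T,N}$ is exactly a discrete Coulomb gas at inverse temperature $\beta=2$ with quadratic confinement $\frac T2 x^2$ subject to this ``hard wall on the density'' constraint.

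Next I would invoke a Laplace/large-deviation argument for this constrained discrete log-gas --- this is the substance of \cite{BMS,LM} --- yielding
\[
F(T)=\lim_{N\to\infty}\frac1{N^2}\log Z_{T,N}=c-\inf_{\mu\in\mathcal K}\mathcal E_T(\mu),\qquad \mathcal E_T(\mu)=\frac T2\!\int\! x^2\,d\mu-\!\iint\!\log|x-y|\,d\mu(x)\,d\mu(y),
\]
the infimum being attained at a unique $\mu_T^\star\in\mathcal K$. One then solves this constrained equilibrium problem explicitly. Without the constraint the minimiser is the semicircle $\sigma_T$ on $[-2/\sqrt T,2/\sqrt T]$ with density $\frac{T}{2\pi}\sqrt{4/T-x^2}$, whose maximal value is $\sqrt T/\pi$; hence $\sigma_T\in\mathcal K$ exactly when $T\le\pi^2$, in which case $\mu_T^\star=\sigma_T$. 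For $T>\pi^2$ the constraint becomes active: $\mu_T^\star$ has density identically $1$ on a central interval $[-b,b]$ and density $<1$ on $[-a,-b]\cup[b,a]$; writing the Euler--Lagrange equation for the ``free part'' $\mu_T^\star-\Leb_{[-b,b]}$ as a singular integral equation on $[b,a]$ and solving it by Riemann--Hilbert/resolvent methods pins down $a=a(T)$ and $b=b(T)$ in terms of complete elliptic integrals, with $b\downarrow 0$ and $a\downarrow 2/\sqrt T$ as $T\downarrow\pi^2$; this identifies $\pi^2$ as the Douglas--Kazakov point \cite{DK}.

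Finally I would substitute $\mu_T^\star$ back into $\mathcal E_T$ to obtain $F$ in closed form on each side of $\pi^2$: an elementary expression (affine in $T$, plus a multiple of $\log T$, plus a constant) on $(0,\pi^2]$, and an expression built from the complete elliptic integrals $K,E$ on $[\pi^2,\infty)$. Expanding the elliptic branch near $T=\pi^2$ via the small-modulus asymptotics of $K,E$, one checks that $F$, $F'$ and $F''$ agree with their subcritical values at $\pi^2$ --- so $F\in\mathscr C^2(\R_+^*)$ --- while $F'''$ has a non-vanishing one-sided mismatch there, and since each branch is real-analytic on its open domain, $F\in\mathscr C^\infty(\R_+^*\setminus\{\pi^2\})$, which is the assertion. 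The hard part will be the two non-standard ingredients rather than the bookkeeping: first, upgrading the heuristic picture to a genuine large-deviation principle for a \emph{discrete} log-gas with a density constraint --- the discreteness is exactly what produces the bound $\le 1$ and must be tracked through the upper and lower bounds, with the subexponential corrections controlled (uniformly in $T$ if one wants local uniformity of $F$); second, the supercritical equilibrium measure being a genuinely constrained, two-band object whose free energy is elliptic rather than elementary, so that isolating precisely a \emph{third}-order (not first- or second-order) singularity at $T=\pi^2$ demands a careful matched asymptotic expansion, not a soft continuity argument.
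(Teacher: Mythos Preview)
The paper does not give its own proof of this theorem: it is stated as a result from the literature, attributed to L\'evy and Ma\"ida \cite{LM} (with the existence of the limit also credited to Boutet de Monvel and Shcherbina \cite{BMS}), and is only used later as a side remark to explain why the absolute-continuity argument fails for $g=1$ non-separating loops. There is therefore no ``paper's own proof'' to compare your proposal against.

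That said, your sketch is an accurate outline of the approach carried out in \cite{LM}: rewrite $Z_{T,N}$ via the character expansion as a discrete log-gas at $\beta=2$ with quadratic confinement and the density constraint $d\mu/dx\le 1$ forced by the integer spacing of the shifted highest weights; prove a large-deviation principle that identifies $F(T)$ as (a constant minus) the infimum of the constrained logarithmic energy $\mathcal E_T$; solve the equilibrium problem, getting the semicircle for $T\le\pi^2$ and a saturated two-cut measure expressed through complete elliptic integrals for $T>\pi^2$; and finally read off the third-order discontinuity by expanding the elliptic branch near $T=\pi^2$. You have also correctly flagged the two genuinely delicate points --- the discreteness in the LDP and the matched expansion isolating exactly a third-order jump --- which are where the real work in \cite{LM} lies.
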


 We will prove Theorem \ref{__THM:PF} in section \ref{sec:Pf PF}. Beforehand, we recall in the next section a well known expression of the partition function using
 representation 
 theory of compact  groups.

% 
% First we can split the limits of partition functions into two categories: the \emph{flat} case, when $g=1$, and the \emph{hyperbolic} case, when $g\geq 2$. Let us justify this 
% terminology: according to the Gauss--Bonnet theorem, the Euler characteristic $2-2g$ of a Riemann surface of genus $g$ is equal to its total curvature. Hence, up to 
% homeomorphism, a Riemann surface of genus $1$ has the same curvature as the torus, which possesses a flat metric, and a Riemann surface of genus $g\geq 2$ has the same 
% curvature as a hyperbolic surface.

\subsection{Character decomposition of the partition function}
\label{sec:Pf Charact}
We shall express the partition functions $Z_{g,T,X_r}$ for any $X\in\{\tilde{A},A,B,C,D\}$ as a sum over non-increasing sequences of integers. The result of this section are well known \cite{Sen0,Lev3}. This will follow from standard representation theory of compact groups, and we will prove it succinctly for the sake of completeness. The main result we want to prove is Prop. \ref{______Prop:fourier_pf}, and the remaining of the section may be skipped by anyone familiar with representation theory. Most of the results we will present can be found in \cite{BtD} and \cite{Far}.

\begin{dfn}
Let $G$ be a compact Lie group.
\begin{enumerate}
\item A \emph{complex representation} of $G$ is a couple $(\varphi,V)$, where $V$ is a complex vector space and $\varphi:G\to\GL(V)$ is a smooth group morphism.
\item A representation $(\varphi,V)$ is \emph{irreducible} if $V$ is the only nontrivial subspace left invariant by $\varphi$.
\item The \emph{dimension} $d_\varphi$ of a representation $(\varphi,V)$ is defined as the dimension of the vector space $V$.
\item The \emph{character} of a representation $(\varphi,V)$ is the function $\chi_\varphi:G\to\C$ defined by
\[
\chi_\varphi(g)=\Tr(\varphi(g)).
\]
\end{enumerate}
\end{dfn}

In the following, we will only consider finite-dimensional representations, unless stated otherwise. The two main results of the representation theory that we will be using are the celebrated \emph{Schur's lemma} and \emph{Plancherel's theorem}.

\begin{lem}[Schur's lemma]
\begin{enumerate}
\item Let $(\varphi_1,V_1)$ and $(\varphi_2,V_2)$ be two irreducible representations of $G$. If $A:V_1\to V_2$ is a nonzero linear map such that
\begin{equation}\label{------eq:Schur1}
A\varphi_1(g)=\varphi_2(g)A,\ \forall g\in G,
\end{equation}
then $A$ is an isomorphism.
\item Let $(\varphi,V)$ be an irreducible representation of $G$. If $A\in\End(V)$ satisfies
\begin{equation}\label{------eq:Schur2}
A\varphi(g)=\varphi(g)A,\ \forall g\in G,
\end{equation}
then there exists $\a\in\C$ such that $A=\a I$, where $I$ denotes the identity of $V.$
\end{enumerate}
\end{lem}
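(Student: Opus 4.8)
The plan is to prove both parts by the standard kernel/image argument, exploiting the definition of irreducibility adopted just above (only $\{0\}$ and the whole space are $\varphi$-invariant, so in particular the space is nonzero). For part 1, let $A\colon V_1\to V_2$ be a nonzero linear map satisfying \eqref{------eq:Schur1}. First I would check that $\ker A$ is $\varphi_1$-invariant: if $v\in\ker A$ then $A\varphi_1(g)v=\varphi_2(g)Av=0$, hence $\varphi_1(g)v\in\ker A$. Since $A\neq 0$ we have $\ker A\neq V_1$, and irreducibility of $(\varphi_1,V_1)$ forces $\ker A=\{0\}$, i.e. $A$ is injective. Symmetrically, $\operatorname{im}A$ is $\varphi_2$-invariant, because $\varphi_2(g)(Av)=A(\varphi_1(g)v)\in\operatorname{im}A$ by \eqref{------eq:Schur1}; since $A\neq 0$ gives $\operatorname{im}A\neq\{0\}$, irreducibility of $(\varphi_2,V_2)$ forces $\operatorname{im}A=V_2$, i.e. $A$ is surjective. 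Hence $A$ is an isomorphism.

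For part 2, I would use that $V$ is a finite-dimensional complex vector space, so the endomorphism $A$ has at least one eigenvalue $\a\in\C$, namely a root of its characteristic polynomial. The map $A-\a I$ still commutes with every $\varphi(g)$ by \eqref{------eq:Schur2}, so it is an intertwiner of $(\varphi,V)$ with itself; moreover $\ker(A-\a I)\neq\{0\}$ by the choice of $\a$, so $A-\a I$ is not injective. By part 1 (applied with $V_1=V_2=V$ and $\varphi_1=\varphi_2=\varphi$), any nonzero intertwiner between irreducible representations is an isomorphism, hence injective; therefore $A-\a I$ cannot be nonzero, which gives $A=\a I$.

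The argument has no genuine obstacle: it is the classical proof. The only points that require a little care are that ``irreducible'' is being read here as ``$\{0\}$ and $V$ are the only invariant subspaces'' (in particular $V\neq\{0\}$), and that part 2 relies on finite-dimensionality over $\C$ to produce an eigenvalue --- both being in force under the conventions fixed just before the statement, where only finite-dimensional representations are considered.
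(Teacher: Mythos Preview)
Your proof is correct and is exactly the classical kernel/image argument for Schur's lemma. The paper does not actually give a proof of this statement: it records Schur's lemma as a standard result from representation theory (with references to \cite{BtD} and \cite{Far}) and moves directly to its consequences, so there is nothing to compare against beyond noting that your argument is the textbook one.
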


Among the numerous consequences of this lemma, one can state that the relation \eqref{------eq:Schur1} defines an equivalence relation between irreducible representations of $G$, and it enables to define the set $\widehat{G}$ of equivalence classes of irreducible representations. This set, sometimes called \emph{dual space} of $G$, is countable whenever $G$ is compact, and it is also a group when $G$ is abelian. Another consequence of Schur's lemma is that two irreducible representations within the same class $\lambda\in\widehat{G}$ have same dimension $d_\lambda$ and same character $\chi_\lambda$.

\begin{thm}[Plancherel's theorem]
For any continuous function $f\in C^\infty(G)$, the following sum absolutely converges
\begin{equation}
f(g)=\sum_{\lambda\in\widehat{G}}d_\lambda (f*\chi_\lambda)(g),   \forall g\in G,
\end{equation}
where for any $\chi\in C(G),$ $f*\chi(h)=\int_G f(h)\chi(h^{-1}g )dh,\forall g\in G. $
\end{thm}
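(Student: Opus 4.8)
The plan is to derive this from the Peter--Weyl theorem, upgrading its $L^2$ content to an absolutely and uniformly convergent expansion by means of the Laplace--Beltrami operator $\Delta$. To set up the $L^2$ picture, for $\lambda\in\widehat G$ fix a unitary representative $(\varphi_\lambda,V_\lambda)$ and let $\mathcal E_\lambda\subset C(G)$ be the span of its matrix coefficients. Schur orthogonality gives $\|\chi_\lambda\|_{L^2(G)}=1$, $\chi_\lambda*\chi_\mu=\delta_{\lambda\mu}\,d_\lambda^{-1}\chi_\lambda$ and $\chi_\lambda(g^{-1})=\overline{\chi_\lambda(g)}$, and from these one checks that $f\mapsto d_\lambda\,f*\chi_\lambda$ is, up to replacing $\lambda$ by its contragredient (which permutes $\widehat G$ and preserves $d_\lambda$), the orthogonal projection of $L^2(G)$ onto $\mathcal E_\lambda$. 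The Peter--Weyl theorem states $L^2(G)=\widehat{\bigoplus}_{\lambda\in\widehat G}\mathcal E_\lambda$; hence for every $f\in L^2(G)$ the series $\sum_\lambda d_\lambda\,f*\chi_\lambda$ converges to $f$ in $L^2(G)$, with $\sum_\lambda d_\lambda^2\|f*\chi_\lambda\|_{L^2}^2=\|f\|_{L^2}^2$, so in particular $d_\lambda\|f*\chi_\lambda\|_{L^2}\le\|f\|_{L^2}$ for every $\lambda$. This settles the identity in $L^2$; it remains to upgrade it to the pointwise, absolutely convergent statement for smooth $f$.

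Next I would bring in $\Delta$. As the metric is bi-invariant, $\Delta$ commutes with left and right translations, hence preserves each $\mathcal E_\lambda$ and, by Schur's lemma, acts on it as a scalar $-c_\lambda$, with $c_\lambda\ge 0$, $c_\lambda=0$ iff $\lambda$ is trivial, and $c_\lambda\to\infty$ (one has $c_\lambda\asymp\|\lambda\|^2$). In particular $\Delta\chi_\lambda=-c_\lambda\chi_\lambda$, and since $\Delta$ commutes with convolution by the central function $\chi_\lambda$, for nontrivial $\lambda$ and every $k\ge1$ one gets $(\Delta^k f)*\chi_\lambda=(-c_\lambda)^k\,f*\chi_\lambda$, whence, applying the Parseval bound above to $\Delta^k f$,
\[
d_\lambda\,\|f*\chi_\lambda\|_{L^2}\ \le\ c_\lambda^{-k}\,\|\Delta^k f\|_{L^2}.
\]
On the other hand $\chi_\lambda*\chi_\lambda=d_\lambda^{-1}\chi_\lambda$ gives $d_\lambda\,f*\chi_\lambda=d_\lambda^2\,(f*\chi_\lambda)*\chi_\lambda$, and Cauchy--Schwarz together with $\|\chi_\lambda\|_{L^2}=1$ yields the crude bound $\|d_\lambda\,f*\chi_\lambda\|_\infty\le d_\lambda^2\,\|f*\chi_\lambda\|_{L^2}$. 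Combining the two, for $f\in C^\infty(G)$ and nontrivial $\lambda$,
\[
\big\|d_\lambda\,f*\chi_\lambda\big\|_\infty\ \le\ d_\lambda\,c_\lambda^{-k}\,\|\Delta^k f\|_{L^2}.
\]

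It then remains to sum: $\sum_\lambda\|d_\lambda\,f*\chi_\lambda\|_\infty\le\|\Delta^k f\|_{L^2}\sum_\lambda d_\lambda\,c_\lambda^{-k}$. By Weyl's dimension formula $d_\lambda$ is polynomial in $\lambda$, while $c_\lambda\asymp\|\lambda\|^2$ and the number of $\lambda\in\widehat G$ with $c_\lambda\le R$ is polynomial in $R$ (a lattice-point count in the dominant Weyl chamber), so $\sum_\lambda d_\lambda\,c_\lambda^{-k}<\infty$ once $k$ is large enough. Hence $\sum_\lambda d_\lambda\,f*\chi_\lambda$ converges absolutely and uniformly to a continuous function; since it also converges to $f$ in $L^2$, the two limits agree, which gives the identity at every $g\in G$. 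The step I expect to require genuine care is this last one, namely the quantitative growth of $d_\lambda$ and of the spectral counting function of $\Delta$; for the classical groups at hand these are explicit, but it is cleanest to quote the standard references already in use (e.g.\ \cite{BtD,Far}). A variant replaces the $\Delta^k$-smoothing by convolution with the heat kernel, writing $f=\lim_{t\to0}p_t*f$ with $p_t*f=\sum_\lambda d_\lambda e^{-c_\lambda t/2}\,f*\chi_\lambda$ and passing to the limit term by term; it relies on the same estimates.
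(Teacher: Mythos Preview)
The paper does not supply its own proof of this theorem: it is stated as a classical result, with the surrounding text pointing to \cite{BtD} and \cite{Far}, and is used as a black box thereafter. There is therefore nothing in the paper to compare your argument against.

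Your argument is correct and follows the standard route to upgrade the Peter--Weyl $L^2$ expansion to absolute (uniform) convergence for smooth functions: project with $d_\lambda\,\chi_\lambda$, gain decay from $\Delta^k f$ via the eigenvalue relation $\Delta\chi_\lambda=-c_\lambda\chi_\lambda$, and bound the sup norm through $\chi_\lambda*\chi_\lambda=d_\lambda^{-1}\chi_\lambda$ and Cauchy--Schwarz. The only point worth tightening is the summability step $\sum_\lambda d_\lambda c_\lambda^{-k}<\infty$: you invoke the Weyl dimension formula and the quadratic growth $c_\lambda\asymp\|\lambda+\rho\|^2$, which is fine for connected compact Lie groups (and hence for all groups used in the paper), but you should say explicitly that $G$ is assumed connected so that $c_\lambda=0$ only for the trivial $\lambda$. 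With that caveat, the proof is complete and matches what one finds in the references the paper cites.
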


From these results one can prove the following, which is a particular instance of Thm. 4.2 in \cite{Lia}.

\begin{thm}
Let $(p_t)_{t>0 }$ be the heat kernel on $G$.  Then for all $t>0,$ $p_t\in C^\infty(G) $ and the following sum
\begin{equation}\label{------eq:Fourier_HK}
p_t(g)=\sum_{\lambda\in\widehat{G}}e^{-\frac{t}{2}c_\lambda}d_\lambda \chi_\lambda(g),\ \forall g\in G,
\end{equation}
 absolutely converges, where $c_\lambda\geq 0$ is the non-negative real number such that
\[
\Delta_G\chi_\lambda=-c_\lambda \chi_\lambda.
\]
\end{thm}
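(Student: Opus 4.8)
The plan is to prove the expansion \eqref{------eq:Fourier_HK} in four moves: (i) identify each character $\chi_\lambda$ as an eigenfunction of $\Delta_G$; (ii) show the candidate series converges, together with all its derivatives, to a smooth function $q_t$; (iii) check that $q_t$ solves the heat equation with the correct singular initial data; and (iv) conclude $q_t=p_t$ by uniqueness of the heat kernel. For (i), since the metric on $G$ is bi-invariant, $\Delta_G$ commutes with the left and right regular representations, so by Peter--Weyl the span $M_\lambda\subset L^2(G)$ of the matrix coefficients of a representative of $\lambda\in\widehat G$ is an irreducible $G\times G$-module; Schur's lemma then forces $\Delta_G$ to act on $M_\lambda$ by a scalar, which we write $-c_\lambda$, and evaluating on $\chi_\lambda\in M_\lambda$ gives $\Delta_G\chi_\lambda=-c_\lambda\chi_\lambda$. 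Since $\Delta_G$ is negative semi-definite on $L^2(G)$ — for an orthonormal basis $(X_i)$ of $\mathfrak g$ one has $\langle\Delta_G f,f\rangle_{L^2}=-\sum_i\|X_i f\|_{L^2}^2\le 0$ — and $\|\chi_\lambda\|_{L^2}=1$, we get $c_\lambda\ge 0$, with $c_\lambda=0$ only for the trivial class.

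For (ii), set $q_t(g)=\sum_{\lambda\in\widehat G}e^{-\frac t2 c_\lambda}d_\lambda\chi_\lambda(g)$. The crucial quantitative input is that
\[
\sum_{\lambda\in\widehat G}e^{-\frac t2 c_\lambda}\,d_\lambda^{2}<\infty\qquad\text{for every }t>0 ,
\]
which I would obtain either from Weyl's asymptotics for $\Delta_G$ on the compact manifold $G$ (the eigenvalue $c_\lambda$ occurs with multiplicity $d_\lambda^2$, and $\#\{\lambda:c_\lambda\le R\}$, weighted by $d_\lambda^2$, grows polynomially in $R$), or concretely for the classical groups from the Weyl dimension formula — $d_\lambda$ is a polynomial in the highest weight — together with $c_\lambda\gtrsim\|\lambda\|^2$, so that the sum is dominated by $\sum_n e^{-ct\|n\|^2}\|n\|^{O(1)}<\infty$ over the cone of dominant weights. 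As $|\chi_\lambda|\le d_\lambda$, this bound dominates the series for $q_t$ itself, uniformly on $G$ and locally uniformly in $t>0$. For derivatives, using $\Delta_G^{k}\chi_\lambda=(-c_\lambda)^{k}\chi_\lambda$, $\|\chi_\lambda\|_{L^2}=1$ and Sobolev embedding on $G$ gives $\|D\chi_\lambda\|_{C^0}\lesssim(1+c_\lambda)^{O(1)}$ for any fixed differential operator $D$, and the extra polynomial factor is absorbed by a fraction of the exponential; hence $q_t\in C^\infty(G)$ and term-by-term differentiation in $t$ and application of $\Delta_G$ are legitimate.

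For (iii)--(iv): term by term, $\partial_t q_t=-\frac12\sum_\lambda c_\lambda e^{-\frac t2 c_\lambda}d_\lambda\chi_\lambda=\frac12\sum_\lambda e^{-\frac t2 c_\lambda}d_\lambda\Delta_G\chi_\lambda=\frac12\Delta_G q_t$, so $q_t$ solves the heat equation. For the initial data, I would note that the orthogonal projection of $L^2(G)$ onto $M_\lambda$ is $f\mapsto f*(d_\lambda\chi_\lambda)$ — this follows from the orthogonality relations $(d_\lambda\chi_\lambda)*(d_\mu\chi_\mu)=\delta_{\lambda\mu}\,d_\lambda\chi_\lambda$ together with $\sum_\lambda d_\lambda(f*\chi_\lambda)=f$ (Plancherel) — so that $e^{t\Delta_G/2}$ has convolution kernel $q_t$; equivalently, testing against $f\in C^\infty(G)$, evaluating the Plancherel expansion at the identity and reindexing $\lambda\mapsto\lambda^*$ (using $\chi_\lambda(g^{-1})=\chi_{\lambda^*}(g)$ and $d_{\lambda^*}=d_\lambda$) yields the absolutely convergent identity $f(1)=\sum_\lambda d_\lambda\int_G f\chi_\lambda$, whence $\int_G f\,q_t=\sum_\lambda e^{-\frac t2 c_\lambda}d_\lambda\int_G f\chi_\lambda\to f(1)$ as $t\to 0$ by dominated convergence. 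Thus $q_t$ is a smooth function with $\partial_t q_t=\frac12\Delta_G q_t$ converging to $\delta_1$, so by uniqueness of the heat kernel (equivalently, because $p_t$ is the convolution kernel of $e^{t\Delta_G/2}$) one gets $p_t=q_t\in C^\infty(G)$, which is \eqref{------eq:Fourier_HK}.

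The main obstacle is step (ii): one must quantify the competition between the exponential decay $e^{-\frac t2 c_\lambda}$ and the polynomial growth of $d_\lambda$ — and of the powers of $c_\lambda$ produced by differentiation — over the lattice of dominant weights, so that all the term-by-term manipulations (solving the heat equation, interchanging summation with $\Delta_G$ and with $\int_G$) are valid. Everything else is a short application of Schur's lemma, the orthogonality relations and Plancherel's theorem.
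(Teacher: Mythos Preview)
Your argument is correct and is the standard route to this result. There is nothing to compare against here: the paper does not supply its own proof of this theorem but simply cites it as ``a particular instance of Thm.~4.2 in \cite{Lia}''. Your four-step outline (Schur's lemma to get $\Delta_G\chi_\lambda=-c_\lambda\chi_\lambda$, absolute convergence of $\sum_\lambda e^{-tc_\lambda/2}d_\lambda^2$ via Weyl asymptotics or the explicit dimension and Casimir formulas, term-by-term verification of the heat equation, and Plancherel for the initial data) is exactly how such results are proved in the cited reference and elsewhere, and your identification of step~(ii) as the only place requiring real estimates is accurate.
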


Before we state the Fourier decomposition of Yang--Mills partition function, let us also mention an easy but useful result about the irreducible characters of compact groups.

\begin{prop}
Let $\lambda\in\widehat{G}$ be an equivalence class of irreducible representations of a compact group $G$. We have
\begin{equation}
\int_{G} \chi_\l(x [y,z])dy=d_\l^{-1} \chi_\l(xz^{-1})\chi_\l(z),\forall x,z\in G\label{------eq:int_conj}
\end{equation}
and 
\begin{equation}\label{------eq:int_commu}
\int_{G^2} \chi_\lambda(x[y,z])dydz =\frac{\chi_\lambda(x)}{d_\lambda^2},\ \forall x\in G. 
\end{equation}
\end{prop}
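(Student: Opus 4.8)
The plan is to prove both identities \eqref{------eq:int_conj} and \eqref{------eq:int_commu} using Schur's lemma, via the standard "averaging operator" technique. The key observation is that integrating a matrix coefficient of an irreducible representation $\varphi$ against Haar measure produces an operator that intertwines $\varphi$ with itself (or with a trivial action), so Schur's lemma forces it to be a scalar multiple of the identity, and the scalar is identified by taking a trace.

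First I would establish the basic averaging lemma: for an irreducible representation $(\varphi,V)$ of a compact group $G$, the operator $P=\int_G \varphi(y)\,A\,\varphi(y)^{-1}\,dy$ satisfies $P\varphi(g)=\varphi(g)P$ for all $g\in G$ (by the translation-invariance of Haar measure), hence $P=\frac{\Tr(A)}{d_\varphi}\,I$ by Schur's lemma and a trace computation. Applying this with $A=\varphi(z)$ for fixed $z$ gives $\int_G \varphi(y)\varphi(z)\varphi(y)^{-1}\,dy=\frac{\chi_\varphi(z)}{d_\varphi}I$. Now for \eqref{------eq:int_conj}, I write $\chi_\lambda(x[y,z])=\Tr\big(\varphi(x)\varphi(y)\varphi(z)\varphi(y)^{-1}\varphi(z)^{-1}\big)$ and integrate over $y$, pulling $\varphi(z)^{-1}$ out on the right (it does not depend on $y$) and using the averaging identity on the inner $\int_G \varphi(y)\varphi(z)\varphi(y)^{-1}\,dy$. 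This yields $\int_G \chi_\lambda(x[y,z])\,dy=\frac{\chi_\lambda(z)}{d_\lambda}\Tr\big(\varphi(x)\varphi(z)^{-1}\big)=d_\lambda^{-1}\chi_\lambda(xz^{-1})\chi_\lambda(z)$, which is exactly the claimed formula.

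For \eqref{------eq:int_commu} I would simply integrate \eqref{------eq:int_conj} further against $dz$. We get $\int_{G^2}\chi_\lambda(x[y,z])\,dy\,dz = d_\lambda^{-1}\int_G \chi_\lambda(xz^{-1})\chi_\lambda(z)\,dz$. The remaining integral is the convolution $(\chi_\lambda * \chi_\lambda)(x)$ evaluated at $x$; by the standard orthogonality/convolution relation for irreducible characters (itself another consequence of Schur's lemma, namely $\chi_\lambda*\chi_\mu = \delta_{\lambda\mu}\,d_\lambda^{-1}\chi_\lambda$), this equals $d_\lambda^{-1}\chi_\lambda(x)$. Substituting gives $\int_{G^2}\chi_\lambda(x[y,z])\,dy\,dz = d_\lambda^{-2}\chi_\lambda(x)$, as desired. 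Alternatively, one can avoid invoking the convolution formula by repeating the averaging argument: after integrating \eqref{------eq:int_conj} over $z$, recognize $\int_G \varphi(z)^{-1}\otimes(\text{scalar})$ patterns and apply Schur once more.

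The argument is essentially routine once the averaging lemma is set up; there is no real obstacle, only bookkeeping. The one point requiring a little care is making sure the convolution identity $\chi_\lambda*\chi_\lambda = d_\lambda^{-1}\chi_\lambda$ is either cited or derived from Schur's lemma consistently with the normalization conventions (Haar \emph{probability} measure) in use here — but this is standard and is implicitly available from the Plancherel-type statements already recorded in the excerpt.
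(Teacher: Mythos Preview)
Your proposal is correct and follows essentially the same approach as the paper: define the averaged operator $A_z=\int_G \varphi(yzy^{-1})\,dy$, apply Schur's lemma to identify it as $\frac{\chi_\lambda(z)}{d_\lambda}I$, then read off \eqref{------eq:int_conj} by taking the trace against $\varphi(x)\varphi(z)^{-1}$; for \eqref{------eq:int_commu} the paper likewise integrates \eqref{------eq:int_conj} over $z$ and uses $\chi_\lambda*\chi_\lambda=d_\lambda^{-1}\chi_\lambda$. The only cosmetic difference is that you state the averaging lemma for a general operator $A$ before specializing to $A=\varphi(z)$, whereas the paper works directly with $A_z$.
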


\begin{proof}
Consider   an element $(\varphi,V)$ of the equivalence class $\l$ and set  
$$A_z=\int_{G}\varphi(yzy^{-1})dy,\forall z\in G.  $$ 
Since the Haar measure is invariant
by multiplication,   $A_z$ satisfies \eqref{------eq:Schur2} of Schur's lemma. Therefore,
$A_z= \a_z I$   for some scalars $\a_z \in \C$ with 
$$\a_z  d_\l= \Tr(A_z)= \int_{G} \chi_\l (y z y^{-1})dy= \chi_\l(z).$$
We can now write the left-hand side of \eqref{------eq:int_conj} as
$$\Tr( \varphi(x) A_z \varphi(z^{-1}))= \frac{\chi_\l(z)}{d_\l} \Tr(\varphi(z^{-1} x ))= d_\l^{-1}\chi_\l(z) \chi_\l(z^{-1}x), \forall x,z\in G,$$
while using Plancherel theorem, the left-hand side of \eqref{------eq:int_commu} reads
$$d_\l^{-1}\int_{G}  \chi_\l(z) \chi_\l(z^{-1}x) dz=\frac{1}{d_\l}\chi_\l*\chi_\l(x)= \frac{\chi_\l(x)}{d_\l^2},\forall x\in G.$$

%Consider   an element $(\varphi,V)$ of the equivalence class $\l$ and set  $$A_z=\int_{G}\varphi(yzy^{-1})dy,\forall z\in G \text{ and } B= \frac{1}{d_\l}\int_G   \chi_\l(h) \varphi(h^{-1})dh.  $$ 
%Then  by invariance of the Haar measure 
%by multiplication and of $\chi_\l$ by conjugation, $A_z$ and $B$ satisfy \eqref{------eq:Schur2} of Schur's lemma. Therefore,
%$A_z= \l I$  and $B= \mu I$  for some scalars $\l_z,\mu \in \C$ with 
%$$\l  d_\l= \Tr(A_z)= \int_{G} \chi_\l (y z y^{-1})dy= \chi_\l(z) \text{ and } \mu d_\l=\frac{1}{d_\l} \chi_\l*\chi_\l(1)= \frac{\chi_\l(1)}{d_\l^2}= \frac{1}{d_\l}, $$
%where we used the Plancherel theorem in the last inequality.  We can now write the left-hand-side of \eqref{------eq:int_commu} as 
%$$\int_{G}\Tr( \varphi(x) A_z z^{-1}) dz=\int_{G}  \frac{\chi_\l(z)}{d_\l}\Tr(\varphi(x z^{-1}))dz=\Tr(\varphi(x)B)$$
%where $ B= \frac{1}{d_\l}\int_G   \chi_\l(z) \varphi(z^{-1})dz.  $ For all $g\in G,$
%$$\varphi(g)$$
\end{proof}

\begin{prop}\label{______Prop:fourier_pf}
Let $\Sigma$ be a connected orientable surface of genus $g\geq 1$. If $\partial\Sigma$ is connected, then the Yang--Mills partition function on $\Sigma$ with structure group of type $X_r$ and boundary condition $t\in G/\mathrm{Ad}$ is given by
\begin{equation}\label{------eq:fourier_pf_b}
Z_{(g,1),T,X_r}(t)=\sum_{\lambda\in\widehat{G}} e^{-\frac{T}{2}c_\lambda}d_\lambda^{1-2g}\chi_\lambda(t).
\end{equation}
If $\Sigma$ has no boundary, then the Yang--Mills partition function on $\Sigma$ with structure group $G$ is given by
\begin{equation}\label{------eq:fourier_pf}
Z_{g,T,X_r}=\sum_{\lambda\in\widehat{G}} e^{-\frac{T}{2}c_\lambda}d_\lambda^{2-2g}.
\end{equation}
\end{prop}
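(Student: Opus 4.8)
The plan is to evaluate each of the two partition functions on one convenient cellular decomposition of the surface and then to expand the heat kernel into irreducible characters of $G$, the classical group of type $X_r$.

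First, by the Lemma of \cite{Sen0,Lev2} recalled above, $Z_{g,T,X_r}$ and $Z_{(g,1),T,X_r}(t)$ depend only on $g$, $T$ and, in the boundary case, on $t$; one may therefore compute them on any area-weighted map of the appropriate topological type. For the closed case I would use the one-vertex, one-face map attached to the standard $4g$-gon fundamental domain of $\Sigma$, with positively oriented edges $a_1,b_1,\dots,a_g,b_g$ and unique face of area $T$: reading off $h_{\partial F}$ and using that $p_T$ is central and invariant by inversion gives
\[
Z_{g,T,X_r}=\int_{G^{2g}}p_T\!\left(\prod_{i=1}^g[x_i,y_i]\right)dx_1\,dy_1\cdots dx_g\,dy_g ,
\]
with $x_i=h_{a_i}$, $y_i=h_{b_i}$ (this is the computation of Section~\ref{sec:example} for $g=1$). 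For the boundary case one uses the analogous map with an additional edge tracing the boundary component $C$, so that $C$ is a loop of the map and the map has a single interior face $F$ with, up to conjugation, $h_{\partial F}=z\prod_{i=1}^g[x_i,y_i]$ for $z=h_C$; the constrained uniform measure $\mathrm{U}_{M,(t),G}$ then makes $z$ distributed according to $\delta_{t(1)}$, the invariant probability measure on the conjugacy class $t$, the variables $x_i,y_i$ remaining Haar-distributed, whence
\[
Z_{(g,1),T,X_r}(t)=\int_{G^{2g}}\int_G p_T\!\left(z\prod_{i=1}^g[x_i,y_i]\right)\delta_{t(1)}(dz)\,dx_1\,dy_1\cdots dx_g\,dy_g .
\]

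Next I would substitute the character expansion \eqref{------eq:Fourier_HK} of $p_T$ and exchange the sum over $\widehat{G}$ with the integral. This exchange is legitimate by Fubini's theorem: one has $|\chi_\lambda|\le d_\lambda$ on $G$, while evaluating \eqref{------eq:Fourier_HK} at the identity shows $\sum_{\lambda\in\widehat{G}}e^{-\frac{T}{2}c_\lambda}d_\lambda^2=p_T(1)<\infty$, so the integrand is dominated, uniformly in the integration variables, by an absolutely convergent series of constants. It then remains to compute, for fixed $\lambda\in\widehat{G}$ and fixed $u\in G$,
\[
I_\lambda(u):=\int_{G^{2g}}\chi_\lambda\!\left(u\prod_{i=1}^g[x_i,y_i]\right)dx_1\,dy_1\cdots dx_g\,dy_g .
\]
Integrating over $(x_g,y_g)$ by means of \eqref{------eq:int_commu}, with $u\prod_{i<g}[x_i,y_i]$ in the place of the variable $x$ of that identity, removes one commutator and produces a factor $d_\lambda^{-2}$; iterating $g$ times yields $I_\lambda(u)=d_\lambda^{-2g}\chi_\lambda(u)$. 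In the closed case $u=1$ and $\chi_\lambda(1)=d_\lambda$, so $I_\lambda(1)=d_\lambda^{1-2g}$ and
\[
Z_{g,T,X_r}=\sum_{\lambda\in\widehat{G}}e^{-\frac{T}{2}c_\lambda}\,d_\lambda\,I_\lambda(1)=\sum_{\lambda\in\widehat{G}}e^{-\frac{T}{2}c_\lambda}\,d_\lambda^{2-2g}.
\]
In the boundary case, integrating first over the $x_i,y_i$ at fixed $z$ gives $d_\lambda^{-2g}\chi_\lambda(z)$, and as $\chi_\lambda$ is constant equal to $\chi_\lambda(t)$ on the conjugacy class $t$ supporting $\delta_{t(1)}$, one obtains $\int_G\chi_\lambda(z)\,\delta_{t(1)}(dz)=\chi_\lambda(t)$ and
\[
Z_{(g,1),T,X_r}(t)=\sum_{\lambda\in\widehat{G}}e^{-\frac{T}{2}c_\lambda}\,d_\lambda\,d_\lambda^{-2g}\,\chi_\lambda(t)=\sum_{\lambda\in\widehat{G}}e^{-\frac{T}{2}c_\lambda}\,d_\lambda^{1-2g}\,\chi_\lambda(t).
\]

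I do not expect any real obstacle here; this is the classical harmonic-analytic evaluation of Yang--Mills partition functions, and the only care required is in the bookkeeping. Precisely: (i) the term-by-term integration must be justified, which is exactly the bound $\sum_\lambda e^{-\frac{T}{2}c_\lambda}d_\lambda^2=p_T(1)<\infty$ used above; and (ii) in the boundary case one must track orientations and conjugacy classes, where it is the centrality and inversion-invariance of $p_T$ that allow one to disregard the exact position and orientation of the boundary edge inside $\partial F$ (the opposite choice of orientation merely turns $\chi_\lambda(t)$ into $\chi_\lambda(t^{-1})$). Finally, the closed-surface identity \eqref{------eq:fourier_pf} also follows from the boundary one by specialising to the trivial boundary condition $t=[1]$, since then $\delta_{t(1)}=\delta_1$ and $\chi_\lambda(1)=d_\lambda$.
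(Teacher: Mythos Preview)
Your proof is correct and follows essentially the same approach as the paper: choose the one-vertex, one-face map on the $4g$-gon (with an extra boundary edge in the second case), expand $p_T$ via \eqref{------eq:Fourier_HK}, and iterate the commutator identity \eqref{------eq:int_commu} to reduce each $\lambda$-term to $d_\lambda^{1-2g}\chi_\lambda(\cdot)$. The only difference is that you justify the sum--integral interchange explicitly via $\sum_\lambda e^{-Tc_\lambda/2}d_\lambda^2=p_T(1)<\infty$ and treat the boundary condition through $\delta_{t(1)}$, whereas the paper leaves these details implicit.
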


\begin{proof}
Let us start with the simplest case, which is \eqref{------eq:fourier_pf}. Consider the area-weighted map $(M,a)$ of genus $g$ with $1$ vertex,   $2g$ edges $(a_1,b_1,\ldots,a_g,b_g)$ and $1$ face $f$ with area $T>0$ and boundary  
$$\pl f=  b_g^{-1}a_{g}^{-1}b_ga_g\cdots b_1^{-1}a_1^{-1}b_1a_1. $$ 
%generating the fundamental group $\pi_1(\Sigma)$, and let $\mathbb{G}=(\mathbb{V},\mathbb{E},\mathbb{F})$ be the graph on $\Sigma$ composed by 1 face $F$ of area $T$, whose boundary is given by $\partial F=[a_1,b_1]\cdots[a_g,b_g]$. 
Then
\[
Z_{g,T,X_r}=Z_{M,a,G}=\int_{G^{2g}} p_T([x_1,y_1]\cdots[x_g,y_g])dx_1dy_1\cdots dx_gdy_g. 
\]
Using the Fourier decomposition of the heat kernel \eqref{------eq:Fourier_HK}, we get
\[
Z_{g,T,X_r}=\sum_{\lambda\in\widehat{G}(N)}e^{-\frac{T}{2}c_\lambda}d_\lambda\int_{G^{2g}}\chi_\lambda([x_1,y_1]\cdots[x_g,y_g])dx_1dy_1\cdots dx_gdy_g.
\]
We now integrate out all commutators using \eqref{------eq:int_commu}, which yields\footnote{The following formula goes back to Frobenius for finite groups, see e.g. \cite[Sect. 7.9]{Sen7}.}
\[
\int_{G^{2g}}\chi_\lambda([x_1,y_1]\cdots[x_g,y_g])dx_1dy_1\cdots dx_gdy_g=\frac{1}{d_\lambda^{2g-1}}.
\]
The results follows. The proof of \eqref{------eq:fourier_pf_b} is similar, using an area-weighted map with one face whose boundary is $\partial f=b_g^{-1}a_{g}^{-1}b_ga_g\cdots b_1^{-1}a_1^{-1}b_1a_1\ell$, where $\ell$ is the simple loop corresponding to $\partial\Sigma$, oriented positively, for any $h\in G$ to 
\begin{equation}
Z_{(g,1),T,X_r}(h)=Z_{M,a,G}(h)=\int_{G^{2g}} p_T(h[x_1,y_1]\cdots[x_g,y_g])dx_1dy_1\cdots dx_gdy_g.  \label{-----eq:PF one Boundary}
\end{equation}
\end{proof}

Now we will specify \eqref{------eq:fourier_pf} to all compact classical groups: to do this, we only need to know the dual space $\widehat{G}$, and the numbers $c_\lambda$ and $d_\lambda$ for every $\lambda\in\widehat{G}$. This can be done thanks to their root systems. Most definitions and results are borrowed from \cite{BtD}, and can also be recovered with much clarity from Sections 2.2 and 2.3 in \cite{Mel}.

\begin{dfn}
Let $G$ be a compact connected Lie group.
\begin{enumerate}
\item A \emph{weight} of a representation $(\pi,V)$ of $G$ is a group morphism $\omega:T\to\U(1)$ where $T$ is a maximal torus of $G$, such that the space $V^\omega=\{v\in 
V: \rho(t)v=\omega(t)v,\ \forall t\in T\}$ is not trivial. The weights form a lattice $I_r=\Z\Omega=\bigoplus_{i=1}^r\Z\omega_i$, where $(\omega_i)$ is a distinguished basis of the 
lattice and $r$ is the rank of the weight lattice. The normaliser of $T$ in $G$ acts by conjugation on $T$ yielding an action on $I_r.$  The vector space $\mathscr{V}
_r=\R\Omega=I_r\otimes_\Z\R$ 
can be endowed with an invariant 
inner product $\langle\cdot,\cdot\rangle$. In fact, if $\mathfrak{t}$ is the Lie algebra of the maximal torus $T\subset G$, we have $\R\Omega\simeq\mathfrak{t}^*$ and $\langle\cdot,\cdot\rangle$ can be taken as the dual of the inner product \eqref{------eq:inner P LieAlg}.
\item A \emph{root} of $G$ is a non-zero weight of the adjoint representation of $G$. The \emph{root system} $\Phi$ can be split into $\Phi_+\sqcup\Phi_-$ with $\Phi_-=-\Phi_+$. 
 The elements of $\Phi_+$ are called \emph{positive roots}. 
\item The \emph{Weyl chamber} $\mathscr{C}_r$ is the set
\[
\mathscr{C}_r=\{x\in\mathscr{V}_r:\langle x,\alpha\rangle> 0\ \forall \alpha\in\Phi_+\}.
\]
It is an open cone in $\mathscr{V}_r$.
\item A \emph{dominant weight} is a weight that belongs to the closure $\overline{\mathscr{C}_r}$ of the Weyl chamber . We denote by $\Lambda_r$ the set of dominant weights. 
\item The element 
$$\rho=\frac{1}{2}\sum_{\a \in \Phi_+} \a$$
has the property that for any $\omega\in I_r$
\begin{equation}
 \omega\in \overline{\mathscr{C}_r} \text{ if and only if } \rho+ \omega \in \mathscr{C}_r.\label{------eq:smallest element}
\end{equation}
%\begin{equation}
%\rho+ ( I_r\cap \overline{\mathscr{C}_r})=(\rho+I_r)\cap \mathscr{C}_r. \label{------eq:smallest element}
%\end{equation}
\end{enumerate}
\end{dfn}

Below are listed the root systems corresponding to the classical groups depending on their root systems. Note that we do not treat the root systems of exceptional Lie algebras $E_6,E_7,E_8$, $F_4$ and $G_2$, as we focus in this paper on unitary, special unitary, orthogonal and symplectic groups.

\[
\begin{array}{c|c|c}
\text{Group} & \text{Type} & \Phi \\
\hline
\U(r) & \tilde{A}_r & \{e_i-e_j,1\leq i,j\leq r, i\neq j\} \\
\SU(r+1) & A_r & \{e_i-e_j,1\leq i, j\leq r+1, i\neq j\} \\
\SO(2r+1) & B_r & \{\pm e_i\pm e_j, 1\leq i<j\leq r\}\cup\{\pm e_i,1\leq i\leq N\} \\
\Sp(r) & C_r & \{\pm e_i\pm e_j, 1\leq i<j\leq r\}\cup\{\pm 2e_i,1\leq i\leq N\} \\
\SO(2r) & D_r & \{\pm e_i\pm e_j, 1\leq i<j\leq r\} \\
\end{array}
\]
For $\SU(r+1)$, we identify $\mathscr{V}
_r$ with $\mathscr{V}_r=\R^{r+1}/\R(1,\ldots,1)$ endowed with the norm
\[
\Vert [x]\Vert^2=\frac{1}{N+1}\sum_{i=1}^{N+1}\big(x_i-\frac{1}{N+1}\sum_{j=1}^{N+1} x_j\big)^2,\ \forall [x]\in\R^{N+1}/\R(1,\ldots,1).
\]
For any classical group $G$ associated with the type $X_r$, with $X\in\{\tilde{A},A,B,C,D\}$, there is a bijection $\widehat{G}\simeq\Lambda_r$: this is a consequence of the highest-weight theory (see \cite{BtD} for instance). From now on, $\lambda$ will indistinctly represent an equivalence class of irreducible representations or an element of $
\Lambda_r$. 

\begin{prop}
Let $G_N$ be a classical group associated with 
one of the types $\tilde{A}_r,A_r,B_r,C_r,D_r$ and $\lambda\in\widehat{G}_N\simeq\Lambda_r$ be an equivalence class of irreducible representations. We have
\begin{equation}\label{------eq:cas}
c_\lambda=\langle\lambda+2\rho,\lambda\rangle
\end{equation}
and
\begin{equation}\label{------eq:dim}
d_\lambda=\frac{\prod_{\alpha\in\Phi_+}\langle\lambda+\rho,\alpha\rangle}{\prod_{\alpha\in\Phi_+}\langle\rho,\alpha\rangle}.
\end{equation}
\end{prop}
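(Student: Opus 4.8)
The plan is to obtain both identities from standard highest-weight theory, as in \cite{BtD}. Since the statement involves only the root system $\Phi$ and the invariant inner product on $\mathscr{V}_r\simeq\mathfrak{t}^*$ (the dual of \eqref{------eq:inner P LieAlg}), the argument is uniform in the type $X_r\in\{\tilde{A}_r,A_r,B_r,C_r,D_r\}$ and needs no case distinction: for $\SU(r+1)$ one works in the quotient model $\mathscr{V}_r=\R^{r+1}/\R(1,\dots,1)$ with the norm displayed above, and for $\tilde{A}_r$ one works with the reductive Lie algebra of $\U(r)$, the computation being unchanged. I recall the argument because it is precisely the normalisation \eqref{------eq:inner P LieAlg} that fixes the numerical value of $c_\lambda$.

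For \eqref{------eq:cas}, the first step is to identify the action of $\Delta_G$ on the matrix coefficients of an irreducible representation $(\varphi,V_\lambda)$ of highest weight $\lambda$ with the action of the Casimir element $\cC=\sum_i X_i^2$ of the (complexified) universal enveloping algebra, where $(X_i)$ is an orthonormal basis of $\mathfrak{g}_N$ for \eqref{------eq:inner P LieAlg}: differentiating $g\mapsto\langle\varphi(g)v,w\rangle$ twice along the left-invariant vector fields generated by the $X_i$ turns $\Delta_G$ into $d\varphi(\cC)$. As $d\varphi(\cC)$ commutes with $\varphi(G)$, Schur's lemma gives $d\varphi(\cC)=-c_\lambda\Id$ for a scalar, which is $\geq 0$ since each $d\varphi(X_i)$ is skew-Hermitian; this matches the convention $\Delta_G\chi_\lambda=-c_\lambda\chi_\lambda$. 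It then remains to evaluate this scalar on a highest-weight vector $v_\lambda$, which I would do by passing to a Cartan--Weyl basis: after complexifying, pick root vectors $E_\alpha$ suitably normalised for the invariant form and an orthonormal basis $(H_j)$ of $\mathfrak{t}$, so that $\cC=\sum_j H_j^2+\sum_{\alpha\in\Phi_+}(E_\alpha E_{-\alpha}+E_{-\alpha}E_\alpha)$; rewriting $E_\alpha E_{-\alpha}=E_{-\alpha}E_\alpha+[E_\alpha,E_{-\alpha}]$, with $[E_\alpha,E_{-\alpha}]$ the element of $\mathfrak{t}$ that represents $\alpha$ through the inner product, and using that $E_\alpha$ kills $v_\lambda$ for $\alpha\in\Phi_+$ while the Cartan part acts on $v_\lambda$ through $\lambda$, one is left with $\cC\,v_\lambda=\big(\langle\lambda,\lambda\rangle+\sum_{\alpha\in\Phi_+}\langle\lambda,\alpha\rangle\big)v_\lambda=\langle\lambda+2\rho,\lambda\rangle v_\lambda$ using $2\rho=\sum_{\alpha\in\Phi_+}\alpha$. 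Combined with the sign fixed above, this gives \eqref{------eq:cas}.

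For \eqref{------eq:dim}, the plan is to apply the Weyl character formula $\chi_\lambda(\exp H)=A_{\lambda+\rho}(H)/A_\rho(H)$, where $A_\mu(H)=\sum_{w\in W}\varepsilon(w)e^{\langle w\mu,H\rangle}$ with $W$ the Weyl group and $\varepsilon$ its sign character, and to let $H=tH_0\to 0$ along a generic ray in $\mathfrak{t}$. Both $A_{\lambda+\rho}$ and $A_\rho$ are $W$-anti-invariant, hence vanish at $H=0$ to order $|\Phi_+|$; expanding to leading order in $t$ and using that $A_\mu(tH_0)\sim t^{|\Phi_+|}\big(\prod_{\alpha\in\Phi_+}\langle\alpha,H_0\rangle\big)\big(\prod_{\alpha\in\Phi_+}\langle\mu,\alpha\rangle\big)\big/\big(\prod_{\alpha\in\Phi_+}\langle\rho,\alpha\rangle\big)$ for regular dominant $\mu$ (a reformulation of the Weyl denominator formula), applied with $\mu=\lambda+\rho$ and $\mu=\rho$, the common factors $t^{|\Phi_+|}\prod_{\alpha\in\Phi_+}\langle\alpha,H_0\rangle$ and $\prod_{\alpha\in\Phi_+}\langle\rho,\alpha\rangle$ cancel and one obtains $d_\lambda=\chi_\lambda(1)=\prod_{\alpha\in\Phi_+}\langle\lambda+\rho,\alpha\rangle\big/\prod_{\alpha\in\Phi_+}\langle\rho,\alpha\rangle$.

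The main obstacle is not conceptual but one of bookkeeping: ensuring the inner product used on $\mathscr{V}_r$ in \eqref{------eq:cas}--\eqref{------eq:dim} is exactly the dual of \eqref{------eq:inner P LieAlg}, so that $[E_\alpha,E_{-\alpha}]$ corresponds to $\alpha$ with no spurious constant; tracking the single overall sign in the Casimir computation; and handling the imaginary-unit conventions that relate weights of $T$ to real linear functionals on $\mathfrak{t}$. Once these are fixed, the rest is a routine transcription of the theory in \cite{BtD}.
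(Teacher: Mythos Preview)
Your sketch is correct and is precisely the standard highest-weight argument found in \cite{BtD}: the Casimir computation on a highest-weight vector for \eqref{------eq:cas} and the small-$H$ limit of the Weyl character formula for \eqref{------eq:dim}. The paper itself gives no proof of this proposition; it states the two formulae, refers to \cite{BtD}, and then simply tabulates the explicit values of $\rho$, $c_\lambda$, $d_\lambda$ for each type. So there is nothing to compare: you have supplied the argument that the paper cites rather than reproduces, and your caveats about the sign/$i$-conventions and the duality between \eqref{------eq:inner P LieAlg} and the pairing on $\mathscr{V}_r$ are exactly the bookkeeping points one has to check when specialising the textbook proof to the normalisation used here.
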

We list below some of the objects defined previously, specified for the corresponding classical groups. Most of the explicit computations can be found in \cite{BtD}.
\begin{itemize}
\item $\U(r)$: type $\tilde{A}_r$
\begin{align*}
& \Phi_+ = \{e_i-e_j,\ 1\leq i<j\leq r\}\\
& \mathscr{C}_r = \{x\in\R^r:x_1\geq \cdots\geq x_r\}\\
& \Lambda_r = \{\lambda\in\Z^r:\lambda_1\geq\cdots\geq\lambda_r\}\\
& \rho = \big(\frac{r-1}{2},\frac{r-3}{2},\ldots,\frac{3-r}{2},\frac{1-r}{2}\big)\\
& c_\lambda = \frac{1}{r}\sum_{i=1}^r \lambda_i(\lambda_i+r+1-2i),\ \forall \lambda\in\Lambda_r\\
& d_\lambda = \prod_{1\leq i<j\leq r}\frac{\lambda_i-\lambda_j+j-i}{j-i},\ \forall \lambda\in\Lambda_r
\end{align*}
\item $\SU(r+1)$: type $A_r$
\begin{align*}
& \Phi_+ = \{[e_i-e_j], 1\leq i<j\leq r+1\} \\
& \mathscr{C}_r = \{[x]\in\R^{r+1}/\sim: x_1\geq \cdots\geq x_{r+1}\} \\
& \Lambda_r = \{[\lambda]\in\Z^{r+1}/\R(1,\ldots,1):\lambda_1\geq\cdots\geq\lambda_{r+1}\}\\
& \Lambda_r \simeq\{\lambda\in\Z^{r+1}:\lambda_1\geq\cdots\geq\lambda_{r+1}=0\}\\
& \rho = \big[\big(\frac{r}{2},\frac{r}{2}-1,\ldots,1-\frac{r}{2},-\frac{r}{2}\big)\big] \\
& c_\lambda = \frac{1}{r+1}\sum_{i=1}^{r+1} \lambda_i(\lambda_i+r+2-2i)-\frac{1}{(r+1)^2}\left(\sum_{i=1}^{r+1}\lambda_i\right)^2,\ \forall[\lambda]\in\Lambda_r \\
& d_\lambda = \prod_{1\leq i<j\leq r+1}\frac{\lambda_i-\lambda_j+j-i}{j-i},\ \forall[\lambda]\in\Lambda_r \\
\end{align*}

\item $\SO(2r+1)$: type $B_r$
\begin{align*}
& \Phi_+=\{e_i\pm e_j, 1\leq i<j\leq r\}\cup\{e_i,1\leq i\leq r\}  \\
& \mathscr{C}_r=\{x\in\R^r: x_1\geq \cdots\geq x_r\geq 0\} \\
& \Lambda_r=\{\lambda\in\N^r:\lambda_1\geq\cdots\geq\lambda_r\}\\
& \rho = \big(r-\frac{1}{2},r-\frac{3}{2},\ldots,\frac{1}{2}\big) \\
& c_\lambda = \frac{1}{2r+1}\sum_{i=1}^r\lambda_i(\lambda_i+2r+1-2i),\ \forall\lambda\in\Lambda_r \\
& d_\lambda =\prod_{1\leq i<j\leq r}\frac{\lambda_i-\lambda_j+j-i}{j-i}\prod_{1\leq i\leq j\leq r}\frac{\lambda_i+\lambda_j+2r+1-i-j}{2r+1-i-j},\ \forall\lambda\in\Lambda_r
\end{align*}
\item $\Sp(r)$: type $C_r$
\begin{align*}
& \Phi_+=\{e_i\pm e_j,1\leq i<j\leq r\}\cup\{2e_i,1\leq i\leq r\} \\
& \mathscr{C}_r=\{x\in\R^r: x_1\geq \cdots\geq x_r\geq 0\} \\
& \Lambda_r=\{\lambda\in\N^r:\lambda_1\geq\cdots\geq\lambda_r\}\\
& \rho = (r,r-1,\ldots,1)\\
& c_\lambda = \frac{1}{2r}\sum_{i=1}^r \lambda_i(\lambda_i+2r+2-2i),\ \forall\lambda\in\Lambda_r \\
& d_\lambda = \prod_{1\leq i<j\leq r}\frac{\lambda_i-\lambda_j+j-i}{j-i}\prod_{1\leq i\leq j\leq r}\frac{\lambda_i+\lambda_j+2r+2-i-j}{2r+2-i-j},\ \forall\lambda\in\Lambda_r
\end{align*}
\item $\SO(2r)$: type $D_r$
\begin{align*}
& \Phi_+=\{e_i\pm e_j,1\leq i<j\leq r\}\\
& \mathscr{C}_r=\{x\in\R^r:x_1\geq x_2\geq\cdots\geq|x_r|\} \\
& \Lambda_r=\{\lambda\in\Z^r:\lambda_1\geq\cdots\geq\lambda_{r-1}\geq|\lambda_r|\}\\
& \rho = (r-1,r-2,\ldots,0) \\
& c_\lambda = \frac{1}{2r}\sum_{i=1}^r \lambda_i(\lambda_i+2r-2i),\ \forall\lambda\in\Lambda_r \\
& d_\lambda = \prod_{1\leq i<j\leq r}\frac{(\lambda_i-\lambda_j+j-i)(\lambda_i+\lambda_j+2r-i-j)}{(j-i)(2r-i-j)},\ \forall\lambda\in\Lambda_r
\end{align*}
\end{itemize}

We will prove Theorem \ref{__THM:PF} using an asymptotic estimation of \eqref{------eq:fourier_pf}.
\subsection{Proof of convergence of partition functions}
\label{sec:Pf PF}
We first recall the result of \cite{Gur} and give an alternative proof based on the result of \cite{Lem} for $A_r$ series.

\subsubsection{Witten zeta function}

When $g\ge 2$, for the root systems $A_r,B_r,C_r$ and $D_r$, a proof relies on an asymptotic estimation of the \emph{Witten zeta function}\footnote{The following function was introduced in \cite{Wit} and seems to have been named after Witten in \cite{Zagier}.} 
\begin{equation}
\zeta_{X_r}(s)=\sum_{\lambda\in\Lambda_r}\frac{1}{d_\lambda^s},\ \forall s>1,\ \forall X\in\{A,B,C,D\}.
\end{equation}

The first claim we can do is that, for any root system $X_r$ with $X\in\{A,B,C,D\}$, any $T\in\R_+$ and any $g\geq 2$, we have
\begin{equation}\label{------eq:encadre_zeta}
1\leq \sum_{\lambda\in\Lambda_r} e^{-\frac{T}{2}c_\lambda}d_\lambda^{2-2g} \leq\zeta_{X_r}(2).
\end{equation}
Indeed, the sum in the middle contains only nonnegative terms, therefore is bounded from below by the term corresponding to $\lambda=(0,\ldots,0)\in\Lambda_r$, which is equal to 1. The upper bound comes from the fact that for any $\lambda\in\Lambda_r$ and $g\geq 2$, the number $c_\lambda$ is nonnegative, and $d_\lambda^{2g-2}\geq d_\lambda^2$. Thanks to \eqref{------eq:encadre_zeta}  we only need to prove
\begin{equation}\label{------eq:lim_zeta}
\lim_{r\to\infty} \zeta_{X_r}(2)=1
\end{equation}
for $X\in\{A,B,C,D\}$ in order to prove Theorem \ref{__THM:PF} for $g\geq 2$.

\begin{prop}[\cite{Gur}]\label{______Prop:lim_zeta}
For any real $s>1$ and $X\in\{A,B,C,D\}$, one has
\[
\lim_{r\to\infty}\zeta_{X_r}(s)=1.
\]
\end{prop}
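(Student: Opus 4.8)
The strategy is to show that for $s>1$, the Witten zeta function $\zeta_{X_r}(s)=\sum_{\lambda\in\Lambda_r}d_\lambda^{-s}$ converges to $1$ as $r\to\infty$ by isolating the trivial representation (which contributes exactly $1$) and bounding the remaining tail uniformly. Since every nontrivial $\lambda\in\Lambda_r$ has $d_\lambda\ge 2$, and in fact $d_\lambda\to\infty$ with the "size" of $\lambda$, the tail $\sum_{\lambda\neq 0}d_\lambda^{-s}$ should be controllable. The natural device is the Weyl dimension formula \eqref{------eq:dim}: for each positive root $\alpha\in\Phi_+$ the factor $\langle\lambda+\rho,\alpha\rangle/\langle\rho,\alpha\rangle\ge 1$ for dominant $\lambda$, so $d_\lambda$ is a product of many factors each $\ge 1$, and one can afford to throw away most of them while keeping enough to force convergence and smallness.

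First I would reduce, exactly as the text does, to proving \eqref{------eq:lim_zeta} for $s=2$; but since the proposition is stated for all $s>1$ it is cleaner to keep $s$ general and note $\zeta_{X_r}(s)\le\zeta_{X_r}(s')$ for $s\ge s'$, so it suffices to treat $s$ close to $1$. Next, for each type I would use the explicit description of $\Lambda_r$ (tuples $\lambda_1\ge\lambda_2\ge\cdots$ of integers/naturals) and of $d_\lambda$ given in the list above. The key estimate is a lower bound on $d_\lambda$ in terms of a few coordinates of $\lambda$: retaining only the factors $\langle\lambda+\rho,\alpha\rangle/\langle\rho,\alpha\rangle$ for $\alpha=e_i-e_j$ with $j=i+1$ (and, in the $B,C,D$ cases, one factor involving $e_r$ or $2e_r$), one gets something like
\[
d_\lambda \;\ge\; \prod_{i=1}^{r-1}\frac{\lambda_i-\lambda_{i+1}+1}{1}\cdot(\text{boundary factor}),
\]
i.e. $d_\lambda\ge\prod_i(\mu_i+1)$ where $\mu_i=\lambda_i-\lambda_{i+1}\ge 0$ are the "gap" coordinates, which are free nonnegative integers. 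Then
\[
\zeta_{X_r}(s)\;\le\;\sum_{\mu\in\N^{r-1}\times(\text{range})}\prod_i\frac{1}{(\mu_i+1)^{s}} \;\le\;\Big(\sum_{k\ge 0}\frac{1}{(k+1)^{s}}\Big)^{r}=\zeta(s)^{r},
\]
which is useless as a bound on its own — it blows up. The fix, and the actual substance of the argument, is that one must \emph{not} bound all $\mu_i$ by the crude $\zeta(s)$ factor but instead observe that when $\lambda\neq 0$ at least one $\mu_i\ge 1$, so $d_\lambda\ge 2$, and then bound $\sum_{\lambda\neq 0}d_\lambda^{-s}$ by $\sum_{\lambda\neq 0}2^{-(s-1)}d_\lambda^{-1}\cdot\ldots$; more precisely one writes $d_\lambda^{-s}=d_\lambda^{-1}\cdot d_\lambda^{-(s-1)}$ and uses a finer lower bound on $d_\lambda$ that grows with $r$ for any fixed nonzero $\lambda$, so that term by term each contribution $\to 0$ and the whole tail is dominated by a convergent series independent of $r$ via a more careful counting (grouping weights by the location of their support in $\{1,\dots,r\}$, each location contributing a geometric-type factor that sums over $r$).

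The main obstacle is precisely making the tail bound uniform in $r$: the naive product bound $\zeta(s)^r$ diverges, so one needs the right lower bound on $d_\lambda$ that interpolates between "$d_\lambda\ge 2$ for all nonzero $\lambda$" and "$d_\lambda$ grows polynomially in $r$ once $\lambda$ is supported near the top of the alcove." Concretely I would: (i) for $A_r,\tilde A_r$, use that if the first nonzero gap is $\mu_i$ then there are additional factors $\prod_{j>i}\langle\lambda+\rho,e_i-e_j\rangle/\langle\rho,\cdot\rangle\ge\prod_{j>i}\frac{\lambda_i+j-i}{j-i}$ contributing a factor growing like $r^{\lambda_i}$, hence $d_\lambda^{-s}\le C r^{-s}$ for such $\lambda$, and sum over the $O(r)$ choices of $i$ and over the finitely many small $\lambda$; (ii) for $B_r,C_r,D_r$, the extra factors $\prod_{i\le j}(\lambda_i+\lambda_j+\cdots)/(\cdots)$ are even larger, so the same scheme applies a fortiori. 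Summing, $\zeta_{X_r}(s)-1=O(r^{1-s})\to 0$. The only genuinely delicate points are (a) checking the discarded Weyl-formula factors are all $\ge 1$ (true since $\lambda$ dominant and $\rho\in\mathscr{C}_r$), and (b) handling the $\SU$ case where $\Lambda_r$ is a quotient, which is dealt with by the normalization $\lambda_{r+1}=0$ already recorded in the list.
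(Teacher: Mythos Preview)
Your plan isolates the right difficulty---the naive gap-product bound $d_\lambda\ge\prod_i(\mu_i+1)$ yields only $\zeta(s)^r$---but the proposed repair does not close it. The step ``sum over the $O(r)$ choices of $i$ and over the finitely many small $\lambda$'' is where the argument breaks: for each position $i$ of the first nonzero gap there remain infinitely many dominant weights (the remaining gaps $\mu_i,\mu_{i+1},\ldots$ are free nonnegative integers), so a uniform termwise bound $d_\lambda^{-s}\le Cr^{-s}$ does not sum. What is actually needed is a lower bound on $d_\lambda$ that is simultaneously large in $r$ \emph{and} summable over all of $\Lambda_r\setminus\{0\}$; neither your $r^{\lambda_i}$ estimate nor the gap-product estimate does both jobs, and you have not explained how to combine them. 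The asserted rate $\zeta_{X_r}(s)-1=O(r^{1-s})$ therefore does not follow from what you have written.

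The paper avoids this entirely by not attempting a direct estimate for all four types. It takes $X=A$ as input from \cite{Lem} and reduces $B,C,D$ to it via the observation you yourself record in (ii): the extra factors $\prod_{i\le j}(\lambda_i+\lambda_j+\cdots)/(\cdots)$ in the Weyl dimension formulas for $B_r,C_r,D_r$ are all $\ge 1$ for dominant $\lambda$, so $d_{X_r,\lambda}\ge d_{A_r,\lambda}$ and hence $\zeta_{X_r}(s)-1\le\zeta_{A_r}(s)-1\to 0$ (with a harmless factor $2$ in type $D$ to account for the sign of $\lambda_r$). Thus the only content of the proof here is this monotonicity reduction; the substantive type-$A$ estimate is cited rather than proved, and your sketch for that case is precisely the part that remains incomplete.
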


\begin{proof}
The case of $A_r$ is detailed in \cite{Lem}, we argue here that it implies the other cases. We will compare the dimensions and the sets of dominant weights for different root 
systems, thus we will denote by $\Lambda_{X_r}$ the set of dominant weights of type $X_r$ and $d_{X_r,\lambda}$ the dimension of an irreducible representation of the 
compact connected group of type $X_r$ with highest weight $\lambda$. First, let us remark that the sets of dominant weights are the same for $A_r,B_r$ and $C_r$. Let 
$X\in\{B,C\}$ and $\lambda\in\Lambda_{X_r}$ a dominant weight different from $(0,\ldots,0)$. As we have $\lambda_i\geq 0$ for any $i$, it is clear that
\[
d_{X_r,\lambda}\geq \prod_{1\leq i<j\leq r}\frac{\lambda_i-\lambda_j+j-i}{j-i}=d_{A_r,\lambda}.
\]
It follows that, for any $s>1$:
\[
0\leq \sum_{\substack{\lambda\in\Lambda_{X_r}\\ \lambda\neq(0,\ldots,0)}}\frac{1}{(d_{X_r,\lambda})^s}\leq \sum_{\substack{\lambda\in\Lambda_{A_r}\\ \lambda\neq(0,\ldots,0)}}\frac{1}{(d_{A_r,\lambda})^s}.
\]
We get the right limit by letting $r$ go to infinity.

In the case of $D_r$, some dominant weights have a negative coefficient, therefore an extra care must be taken. Let $\lambda\in\Lambda_{D_r}$ and $1\leq i<j\leq r-1$, we have $\lambda_i+\lambda_j\geq 0$ as the sum of nonnegative integers. Now, if $1\leq i\leq r-1$, we also know that $\lambda_i\leq\lambda_{r-1}\leq|\lambda_r|$ therefore $\lambda_i-\lambda_r\geq 0$ as well. In any case, we deduce that
\[
d_{D_r,\lambda}\geq \prod_{1\leq i<j\leq r}\frac{\lambda_i-\lambda_j+j-i}{j-i}=d_{A_r,\tilde{\lambda}},
\]
where $\tilde{\lambda}=(\lambda_1,\ldots,\lambda_{r-1},|\lambda_r|)$. Any dominant weight $\lambda$ of type $A_r$ corresponds to at most two different dominant weights of type $r_N$: the same weight, and the one obtained by changing the sign of $\lambda_r$. Hence, we have
\[
0\leq\sum_{\substack{\lambda\in\Lambda_{D_r}\\ \lambda\neq (0,\ldots,0)}} \frac{1}{(d_{D_r,\lambda})^s} \leq 2\sum_{\substack{\lambda\in\Lambda_{A_r}\\ \lambda\neq(0,\ldots,0)}} \frac{1}{(d_{A_r,\lambda})^s}.
\]
The right-hand side converges to 0 when $r\to\infty$ therefore we obtain the expected limit for $D_r$.
\end{proof}

\begin{proof}[Proof of Theorem \ref{__THM:PF} with $g\geq 2$]
According to Prop. \ref{______Prop:lim_zeta}, for any $g\geq 2$ and  $X\in\{A,B,C,D\}$ we have
\[
\lim_{r\to\infty}\zeta_{X_r}(2g-2)=1.
\]
Together with \eqref{------eq:encadre_zeta}, we get 
\[
\lim_{r\to\infty} Z_{g,T,X_r}=1
\]

\end{proof}

\subsubsection{Discrete Gaussian random variables in a cone}

In this section we prove that the convergence of $Z_{1,T,X_r}$ as $r\to \infty$ and $T>0$ is fixed for types $\{B,C,D\}$.   
\begin{dfn}
Let $r\geq 1$ be an integer and $\Lambda_r$ be the set of dominant elements of type $X_r$, with $X\in\{\tilde{A},A,B,C,D\}$. A random variable $\mu$ on $\Lambda_r$ is 
\emph{Gaussian} with parameter $t$ if for any $\Lambda\subset\Lambda_r$
\begin{equation}\label{------eq:disc_gauss}
\P(\mu\subset\Lambda)=\frac{1}{Z_{r,t}}\sum_{\lambda\in\Lambda}e^{-\frac{t}{2}c_\lambda}
\end{equation}
and the associated \emph{partition function} is
\begin{equation}\label{------eq:pf_gauss_disc}
Z_{r,t}=\sum_{\lambda\in\Lambda_r}e^{-\frac{t}{2}c_\lambda}.
\end{equation}
\end{dfn}

The denomination `Gaussian'  is justified by the following remark. Recall that for any classical group $G_N\subset\GL_n(\C)$ of type $X_r$ and any $\lambda\in\widehat{G}_N$,
\[
c_\lambda=\frac{1}{n}\langle\lambda+\rho,\lambda\rangle=\frac{1}{n}(\Vert\lambda+\rho\Vert^2-\Vert\rho\Vert^2).
\]
It follows that for any $\lambda\in\Lambda_r,$
\[
\P(\mu=\lambda)\propto e^{-\frac{t}{2n}\Vert\lambda+\rho\Vert^2},
\]
which  thanks to \eqref{------eq:smallest element} defines, after a shift by $\rho$, a discrete Gaussian distribution conditioned to belong to $\rho+(I_r\cap \overline{\mathscr{C}_r})=(\rho+I_r)\cap \mathscr{C}_r$. Note that if $G_N$ is a 
classical group of type $X_r$, the partition function $Z_{r,T}$ of the Gaussian distribution on $\Lambda_r$ with parameter $T$ is equal to the Yang--Mills partition function $Z_{1,T,X_r}$ on a torus of area $T$ with structure group $G_N$. Before we prove Theorem \ref{__THM:PF} for $g=1$, let us give a few notations that we will use. When $
\lambda=(\lambda_1,\lambda_2,\ldots)$ is a sequence of non-negative real numbers, we denote by $|\lambda|=\sum_{i\geq 1} \lambda_i$ its total sum. We also denote by
\[
\mathcal{P}=\{(\alpha_1,\alpha_2,\ldots)\in\N^{\N^*}:\alpha_1\geq\alpha_2\geq\cdots\}
\]
the set of integer partitions. For any $\alpha\in\mathcal{P}$ we write $\ell(\alpha)$ for its number of non-zero parts. It is well known that the generating function of partitions is given by the inverse of Euler function:
\begin{equation}
\sum_{\alpha\in\mathcal{P}}q^{|\alpha|}=\frac{1}{\phi(q)},\ \forall q\in\C \ \mathrm{s.t.} \ |q|<1.
\end{equation}

\begin{proof}[Proof of Thm \ref{__THM:PF} with $g=1$]
\emph{Types B and C}. Let us define an increasing sequence  setting $\check{\rho}=(r-\rho_1,\ldots,r-\rho_r)$ and write
\[
\langle\lambda+2\rho,\lambda\rangle=\Vert\lambda\Vert^2+2\sum_{i=1}^r\lambda_i\rho_i=2r|\lambda|+\Vert\lambda\Vert^2-2\sum_{i=1}^{\ell(\lambda)}\lambda_i\check{\rho}_i.
\]
For the types $B_r$ and $C_r$ we have $\Lambda_r\simeq\{\lambda\in\mathcal{P}:\ell(\lambda)\leq r\}$ therefore we have
\[
Z_{r,T}=\sum_{\lambda\in\mathcal{P}:\ell(\lambda)\leq r} e^{-\frac{rT}{n}|\lambda|} e^{-\frac{T}{2n}(\Vert\lambda\Vert^2-2\sum_{i=1}^{\ell(\lambda)}\lambda_i\check{\rho}_i)}.
\]
%In both cases we have
%\[
%0\leq\sum_{i=1}^{\ell(\lambda)}\check{\rho}_i\leq \frac{\ell(\lambda)^2}{2}.
%\]
%Indeed, the first inequality follows from the fact that $\rho_i\geq0$ for any $i$, and the second one from the fact that {\color{red} TO DETAIL}. 
Since $\check{\rho}$ is increasing and $\lambda$ is non-increasing,
%while both are non-negative
\[
\sum_{1\leq i,j\leq\ell(\lambda)} (\lambda_i-\lambda_j)(\check{\rho}_i-\check{\rho}_j)\leq 0.
\]
It follows that
\[
0\leq 2\sum_{i=1}^{\ell(\lambda)} \lambda_i\check{\rho}_i\leq \frac{2}{\ell(\lambda)}\sum_{i=1}^{\ell(\lambda)} \lambda_i\sum_{i=1}^{\ell(\lambda)}\check{\rho}_i\leq |\lambda|
\ell(\lambda),
\]
where we used for the last inequality that   in $B_r$ and $C_r$ cases, $\sum_{i=1}^{\ell(\l)}\check{\rho}_i$ is either $\frac{\ell(\l)^2}{2}$ or $\frac{\ell(\l)(\ell(\l)-1)}{2}.$ We obtain
\[
\sum_{\lambda\in\mathcal{P}:\ell(\lambda)\leq r} e^{-\frac{r T}{n}|\lambda|-\frac{T}{2n}\Vert\lambda\Vert^2}\leq Z_{r,T}\leq \sum_{\lambda\in\mathcal{P}:\ell(\lambda)\leq r} e^{-(\frac{rT}{n}-\frac{\ell(\lambda)T}{2n})|\lambda|-\frac{T}{2n}\Vert\lambda\Vert^2}.
\]
Recalling that $n$ is $2r+1$ or $2r$, both sides converge to $\phi(q_T)^{-1}$ when $r\to\infty$ by dominated convergence, and we therefore obtain the expected limit.

\emph{Type $D$.} In this case, let us introduce two notations. For $r\ge2, \lambda=(\lambda_1,\ldots,\lambda_r)$ such that $\lambda_1\geq\cdots\geq\lambda_r$ and $m\in\Z$, we set $\tilde{\lambda}=(\lambda_1,\ldots,\lambda_{r-1})$ and $\lambda+m=(\lambda_1+m,\ldots,\lambda_r+m)$. Then
\[
\langle\lambda+2\rho,\rho\rangle=\Vert\lambda\Vert^2+2\sum_{i=1}^{r-1}\lambda_i(r-i)=2r|\tilde{\lambda}|+\lambda_r^2+\Vert\tilde{\lambda}\Vert^2-2\sum_{i=1}^{r-1}i\tilde{\lambda}_i
\]
and
\begin{align*}
Z_{r,T} = & \sum_{\substack{\lambda\in\mathcal{P},k\in\Z\\ \ell(\lambda)\leq r-1,|k|\leq \lambda_{r-1}}} e^{-\frac{rT}{n}|\lambda|-\frac{T}{2n}(k^2+\Vert\lambda\Vert^2-2\sum_{i=1}^{r-1} i\lambda_i)}\\
= &  \sum_{\substack{\lambda\in\mathcal{P} \\ \ell(\lambda)\leq r-1 }}e^{-\frac{rT}{n}|\lambda|-\frac{T}{2n}( \Vert\lambda\Vert^2-2\sum_{i=1}^{r-1} i\lambda_i)}\left(\sum_{|k|\leq \l_{r-1}} e^{-\frac{Tk^2}{2n}}\right).
\end{align*}
Using the same argument as for the types $B$ and $C$, we can bound the summand  of the sum over $\mathcal{P}$ by 
\[q_{\frac T2}^{|\l|} (2\l_1+1)\]
and by dominated convergence, 
\[\lim_{r\to\infty}Z_{r,T}=   \sum_{ \lambda\in\mathcal{P} } q_T^{|\l|}= \frac{1}{\phi(q_T)}.\]

\end{proof}

\begin{rmk} For any $T>0,$ it is also possible to deduce the case $g\ge 2$ from the case $g=1$ using that when $G$ is of type $X_r$, for any non-trivial representation $\l\in \widehat{G},$ $d_\l\ge r.$
\end{rmk}
\begin{rmk} A proof for the groups  $\U(N)$ and $\SU(N+1)$ using heighest weights can be produced similarly to the other cases yielding a  different proof of the result of 
\cite{Lem}, the latter being  formulated in terms of Young diagrams.
\end{rmk}

\section{Proof of Wilson loops convergence}\label{sec:}
In this section, we will prove theorems \ref{__THM:Disc dYM} and \ref{__THM:simple non-contrac}.  The proof relies on an absolute continuity relation and the boundedness of the Yang--Mills partition
 function,\footnote{which follows from the convergence discussed in the Section \ref{sec:PF}.}  except for a class of loops considered in \ref{sec:Non-Sep Loop}, which requires an independent argument.  Theorem
 \ref{__THM:Disc dYM} and its proof can be extended to the continuous setting of \cite{Lev2} leading to Theorem \ref{__THM: Disc YMCont}.  The needed additional arguments being very close to \cite[Sect. 3]{Lev2} and \cite[Sect. 4]{CDG}  will only be sketched 
 at the end of the first following section. From here onwards, $G_N$ will denote a classical group of size $N$.  

\subsection{Loops within in a disc}
\label{sec: Loops in a disc}
Assume that $(M,a)$ is an area-weighted map on $\Sigma$  of genus $g\ge 1$ and area $|a|=T$, that $U$ is a topological closed disc of $\Sigma$ obtained by union of faces of $M$, with area $|a_U|$, such that $u=T-|a_U|>0$. The boundary of $U$ is given by a loop $\pl U\in\mathrm{L}(\tilde{M}_U).$ 
\begin{lem} \label{_____Lem:Density dYM}The measure  $\big(\mathscr{R}_{\tilde{M}_U}^{M}\big)_*(\YM_{M,a,G_N})$ has density
\begin{equation*}
\frac{Z_{(g,1),u,G_N} (h_{\partial U}^{-1})}{Z_{g,T,G_N}}
\end{equation*}
 with respect to $\YM_{\tilde M_U,a_U,G_N}.$
\end{lem}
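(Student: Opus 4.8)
The idea is to compute the push-forward density directly from the Driver--Sengupta formula \eqref{------eq:DS form} by integrating out all the edge variables of $M$ that do not belong to $\tilde M_U$. First I would set up notation: let $\Ebb_U^+$ be the positively oriented edges of $\tilde M_U$ (equivalently, those edges of $M$ drawn inside the closed disc $U$) and let $e_1,\dots,e_k$ be the remaining positively oriented edges of $M$, i.e.\ those drawn in $\Sigma\setminus U$. The faces of $M$ split into the faces $f\in\Fbb_U$ lying inside $U$ (these are exactly the bounded faces of $\tilde M_U$, with areas $a_U$) and the faces $f$ meeting $\Sigma\setminus U$; the latter, after collapsing, correspond to the single marked face $f_\infty$ of $\tilde M_U$, and their total area is $u=T-|a_U|$. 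Writing $h$ for a generic element of $\mathcal M(\mathrm P(M),G_N)$ and $\bar h=\mathscr R^M_{\tilde M_U}(h)$ for its restriction to $\tilde M_U$, the Driver--Sengupta density of $\YM_{M,a,G_N}$ with respect to $\mathrm U_{M,G_N}$ is $\frac1{Z_{g,T,G_N}}\prod_{f\in\Fbb}p_{a_f}(h_{\pl f})$.

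Next I would perform the integration. Fubini with respect to the product Haar measure lets me write the push-forward density as
\[
\frac{1}{Z_{g,T,G_N}}\ \Big(\prod_{f\in\Fbb_U}p_{a_f}(\bar h_{\pl f})\Big)\ \int_{G_N^{k}}\ \prod_{f\notin\Fbb_U}p_{a_f}(h_{\pl f})\ \prod_{i=1}^k dh_{e_i},
\]
since the faces inside $U$ involve only edges of $\tilde M_U$. The remaining integral is precisely a Yang--Mills partition function: the faces $f\notin\Fbb_U$ together with the edges $e_1,\dots,e_k$ form (after the collapse that defines $\tilde M_U$) an area-weighted map $(M^\ast,a^\ast)$ on the surface with one boundary component, namely $\Sigma$ cut along $\pl U$; this is a genus-$g$ surface with one boundary circle and total area $u$, and the one boundary loop carries the holonomy $\bar h_{\pl U}$. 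By the definition of the partition function with one boundary component (Proposition \ref{______Prop:fourier_pf} and \eqref{-----eq:PF one Boundary}, or equivalently the lemma quoted from \cite{Sen0,Lev2}), this integral equals $Z_{(g,1),u,G_N}(\bar h_{\pl U}^{\,-1})$ — the inverse appearing because the boundary $\pl U$ is oriented oppositely when viewed from the outer component versus from inside $U$; since the heat kernel is invariant under inversion one may also simply verify the orientation bookkeeping here. This gives the density with respect to $\mathrm U_{\tilde M_U,G_N}$; dividing by the density of $\YM_{\tilde M_U,a_U,G_N}$ with respect to $\mathrm U_{\tilde M_U,G_N}$, which is $\prod_{f\in\Fbb_U}p_{a_f}(\bar h_{\pl f})$ (here $Z_{\tilde M_U,a_U,G_N}=1$ because $\tilde M_U$ is a planar, hence non-closed, map), the face-heat-kernel factors cancel and we are left with $Z_{(g,1),u,G_N}(\bar h_{\pl U}^{\,-1})/Z_{g,T,G_N}$, as claimed.

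The main obstacle is not the integration itself but the careful combinatorial/topological bookkeeping: checking that collapsing the faces of $M$ outside $U$ into a single face really produces a well-defined area-weighted map on a genus-$g$ surface-with-one-boundary (independence of the choices hidden in $\tilde M_U$), that its unique boundary loop is $\pl U$ up to rerooting (invoking Proposition \ref{Prop: Boundary}), and that the orientation of $\pl U$ seen from that map is the reverse of the one used inside $U$, which is exactly what produces the $h_{\pl U}^{-1}$ rather than $h_{\pl U}$. One must also make sure the measure-theoretic manipulation is legitimate: $\mathscr R^M_{\tilde M_U}$ is well defined because $(M,a)$ is finer than (an extension of) $\tilde M_U$ in the relevant sense, and Fubini applies since all integrands are continuous and the groups compact. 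Once these points are in place the identity follows from the semigroup/absorption property of the heat kernel exactly as in the worked torus example of Section \ref{sec:example}, equation \eqref{eq: Abs Cont Ex}, of which this lemma is the general form.
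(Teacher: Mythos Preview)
Your approach is correct and reaches the same conclusion as the paper, but the paper organises the computation slightly differently. Rather than recognising the integral over the outer edges as the partition function of a general genus-$g$ surface-with-boundary map and then invoking invariance of partition functions under subdivision, the paper first applies Lemma~\ref{_____Lem: Compat} to reduce to the case where $M$ has exactly one face outside $U$, and then writes $M$ explicitly as the gluing of $\tilde M_U$ with a standard one-face map $M'$ whose boundary word is $e^{-1}[b_g^{-1},a_g^{-1}]\cdots[b_1^{-1},a_1^{-1}]e\,u_1^{-1}\cdots u_k^{-1}$. After this reduction the outer integral is a single heat-kernel integral over $2g+1$ variables that matches \eqref{-----eq:PF one Boundary} directly, so no separate topological verification that the complement forms a valid map on a genus-$g$ bordered surface is needed. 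Your route trades that preliminary refinement step for the topological bookkeeping you flag at the end; both are legitimate, and the paper's version is simply the more explicit of the two.
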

\begin{proof} Thanks to Lemma \ref{_____Lem: Compat}, we can assume that $M$ has same number of faces as $\tilde M_U,$ that is, that exactly one face of $M$ is not included  in $U$. Denote by $k$ be the number of edges of $\pl U.$ Let $M'$ be the map with $k+1$  vertices, $k+2g+ 1$ edges 
$u_1,\ldots, u_k,e,a_1,b_1,\ldots,a_g,b_g$ and two faces $U'$ and $V_g$, such that $\pl U'=u_k\cdots u_1$ and  $\pl V_g=e^{-1} [b_g^{-1},a_g^{-1}]\cdots [b_a^{-1},a_1^{-1}]eu_1^{-1} \cdots u_k^{-1}$. Let us assume that $M$ is 
obtained by 
gluing $M'$ with $\tilde M_U$,  
identifying $\pl U$ with $\pl U'$ so that the base of $\pl U$ is sent to an endpoint of $e$.  For any $h \in \mathcal{M}(\mathrm{P}(M),G_N)$, let us write $x_l=h_{a_l},y_l= h_{b_l}$ for all $1\le l\le g$ and $z=h_{e}.$  Denote the set of non-marked faces of $\tilde M_U$  by $\Fbb_U$. Then $\YM_{M,a,G_N}(dh)$ can be written as
\begin{align*}
\frac 1{Z_{g,T,G_N}}p_{u}(h_{\pl U}^{-1}z[x_1,y_1]\cdots [x_g,y_g] z^{-1} )dz \prod_{i=1}^gdx_i dy_i\prod_{f\in \Fbb_U}p_{a_f}(h_{\pl f}) \mathrm{U}_{\tilde{M}_U,G_N}(dh).
\end{align*}
Integrating over $(x_i,y_i)_i$  and $z$ the result follows by invariance by conjugation of the Haar measure and \eqref{-----eq:PF one Boundary}.
%It follows that under $\YM_{M,a,G_N},$ the law of $\mathscr{R}_{\tilde{M}_U}^M$ is absolutely continuous with respect to $\mathrm{U}_{M,G_N}$ with density
%\[
%\frac {Z_{g,u,G_N}([h_{\pl U}^{-1}])} {Z_{g,T,X_r}}, \ \forall h\in \mathcal{M}(\mathrm{P}(M),G_N),
%\]
%which is the expected result.
\end{proof}

\begin{proof}[Proof of Thm. \ref{__THM:Disc dYM}] Without loss of generality, up to refinement of $M,$ we can assume that $U$ is a topological closed disc, that is the union of faces of $M.$ Let us first assume that $|a_U|<T=|a|.$ For any $h\in G_N$ and $\l\in \widehat{G}_N,$ since the representation is unitary, $\|\chi_\l\|_\infty=\chi_\l(1)= d_\l$. For any $s>0$, 
%if we set $\psi_{g,u,G_N}:G_N\to\C,g\mapsto Z_{g,u,G_N}([g])$, 
we deduce from \eqref{------eq:fourier_pf_b} that
\begin{equation}
\|Z_{(g,1),u,G_N}\|_{\infty}= \sum_{\l\in \widehat{G}_N} d_{\l}^{2-2g}  e^{-\frac u 2 c_\l }=Z_{g,u,G_N}.\label{------eq:Bound Dens gCap}
\end{equation}
Together with the previous lemma, ${\mathscr{R}_{\tilde{M}_U}^{M}}_*(\YM_{M,a,G_N})$  has a density with respect to $\YM_{\tilde M_U,a_U,G_N}$ which is bounded from above by 
\[
\frac{Z_{g,|a|-|a_U|,G_N}}{Z_{g,|a|,G_N}}.
\]
According to Theorem \ref{__THM:PF}, the right-hand side is uniformly bounded in $N$. In particular for any $\varepsilon>0$  and any loop $\ell\in\mathrm{L}(\tilde{M}_U)$,
\[
\PP_{\YM_{M,a}}(|W_\ell- \Phi_{\tilde M_U,a_U}(\ell)|>\varepsilon)\le  \frac{Z_{g,|a|-|a_U|,G_N}}{Z_{g,|a|,G_N}} \PP_{\YM_{\tilde M_U,a_U}}(|W_\ell- \Phi_{\tilde M_U,a_U}(\ell)|>\varepsilon).
\]
Together with Theorem \ref{__THM: Plane} for the plane, we conclude that the right-hand side converges towards zero as $N\to \infty.$

To conclude it remains to prove the uniformity of the convergence in the set $A$ of $a\in \Delta_M(T)$ with $a(f)>0$ for any face $f$ and $|a_U|<T.$   It is enough to show that for any  faces $f_1,f_2$ of $M$ adjacent in $\tilde M_U$, there is a 
constant $K>0$ such that for all  $a,a'\in A,$  with $a(f)=a'(f)$ for all faces distinct from $f_1,f_2$ and $|a-a'|<T,$ 
\begin{equation}
 |\E_{\YM_{M,a}}[ |W_{\ell}-\Phi_{\tilde M_U,a_U}(\ell)|]-\E_{\YM_{M,a'}}[ |W_{\ell}-\Phi_{\tilde M_U,a'_U}(\ell)|]|\le K|a-a'|^{1/2}.\label{eq: UNIF CV thm}
\end{equation}
When $f_1,f_2$ and $a,a'$ are as above with $a(f_1)<a'(f_1)$, while $e$ is an edge of $\tilde M_U$  between  $f_1$ and $f_2$ with $f_2$ on its right, consider an area weighted map $(M^r,a^r)$ finer than $(M,a),$  with an additional edge $ e^r$ with same endpoints as $e$,  an additional face  $\tilde f$ included in $f_2,$ bounded 
by the simple loop $s=e (e^{r})^{-1}$, with  $a^r(\tilde f)=|a-a'|=2 (a'(f_1)-a(f_1)),$ while all other faces can be identified with a face of $M,$ with  $a^r(f_1)=a(f_1), a^r(f_2)=a'(f_2)$ and $a^r(f)=a(f)=a'(f)$ for all $f\not\in\{\tilde f,f_1,f_2\}$.  Denote by $\ell^r$ the loop of $M^r$ obtained from $\ell$ by replacing all occurrences of $e$ in $\ell$ by $e^r$.    Then under $\YM_{M^r,a^r},$   $W_{\ell^r}$ has  same law as $W_\ell$ under 
$\YM_{M,a'}.$    two paths $\a,\b$ of $M^r$ with $\ell=\a\b$,  $W_{\a s\b}$ has  same law as $W_\ell$ under 
$\YM_{M,a'}.$  Using this identity in law, the left-hand-side of \eqref{eq: UNIF CV thm} equals
\begin{align*}
|\E_{\YM_{M^r,a^r}}[ |W_{\ell}-\Phi_{\tilde M_U,a_U}(\ell)|- |W_{\ell^r }-\Phi_{\tilde M^r_U,a^r_U}(\ell^r )|]|&\le  \E_{\YM_{M^r,a^r}} [|W_{\ell}-W_{\ell^r }|]\\ &\hspace{0.3 cm}+ |\Phi_{\tilde M_U,a_U}(\ell)-\Phi_{\tilde M^r_U,a^r_U}(\ell^r )|.
\end{align*}
To conclude it is enough to prove that 
\begin{equation}
\E_{\YM_{M^r,a^r}} [|W_{\ell}-W_{\ell^r }|]\le K |a-a'|^{\frac 1 2},\label{Uniform WL edge deform}
\end{equation}
for some $K>0$ independent of $N.$  Now by induction on the number of occurrences of $s$ in $\ell^r,$ we can assume that $\ell^r=\a s \b$ and $\ell=\a\beta$ for some paths $\a,\b$ of $M^r.$  
%\[ |\E_{\YM_{M^r,a^r}}[ |W_{\a\b}-\Phi_{\tilde M_U,a_U}(\a\b)|- |W_{\a s\b  }-\Phi_{\tilde M^r_U,a^r_U}(\a s\b )|]|\le  \E_{\YM_{M^r,a^r}} [|W_{\a\b}-W_{\a s \b }|].\]
Denoting by $V$ an open disc of $\Sigma$ associated to $\tilde f,$ we can then rewrite and bound  the latter  left-hand side as
\begin{align*}
 \E_{\YM_{M^r,a^r}} [|\tr(H_{\a\b}-H_{\a s \b })|]&= \E_{\YM_{M^r,a^r}} [|\tr(H_{\b\a }-H_{\b\a } H_s)|]\\
 &\le  \E_{\YM_{M^r,a^r}} [\tr[(H_{\b\a} -H_{s} H_{\b\a})(H_{\b\a} -H_{s} H_{\b\a})^*]]^{1/2}\\
 &\le  \sqrt{2} \E_{\YM_{M^r,a^r}}[1- \Re (\tr( H_s)) ]^{1/2}\\
 &\le \frac{\sqrt{2} Z_{g, T-a^r(\tilde f)}}{Z_{g, T}} \E_{\YM_{\tilde M_V^r,a^r_V}}[1- \Re (\tr( H_s)) ]^{1/2}\\
 &\le  \frac{\sqrt{2} Z_{g, T/2}}{Z_{g, T}} \E_{\YM_{\tilde M_V^r,a_V^r}}[1- \Re (\tr( H_s)) ]^{1/2},
\end{align*}
where we used the same argument as in the first part of the proof for the penultimate inequality, and  $2a^r(\tilde f)=|a-a'|<T$.
Now   under $\YM_{\tilde M^r_V,a_V^r}$,  $H_s$ has same law as a Brownian motion at time $|a-a'|.$ It follows from \cite{Lev5} or \cite{Dah} that for all group series considered, 
there is a constant $c>0$ independent of $N$ with 
\[\E_{\YM_{\tilde M_V^r,a_V^r}}[1- \Re (\tr( H_s))]\le 1-e^{-c|a-a' |}\le c|a-a'|,\]
leading to the required bound \eqref{Uniform WL edge deform}.

\end{proof}

\begin{proof}[Proof of Theorem \ref{__THM: Disc YMCont}] We only sketch here the additional arguments needed for the two convergence claims and refer respectively to \cite{Lev2} and \cite{CDG} for more details.  For any multiplicative function $h\in\mathcal{M}(\mathrm{P}(\Sigma), G_N),$ the composition of its restriction to $\mathrm{P}(U)$  with $\Psi$ defines an element $\Psi(h)$ of $\mathcal{M}(\mathrm{P}(D_R),G_N).$ According to the splitting property of Markovian holonomy fields (property (A3) of  \cite[Def. 3.1.2]
{Lev2}) and Lemma 
\ref{_____Lem:Density dYM},  $\Psi_*(\YM_{\Sigma,G_N})$ has density 
\[
\frac{Z_{(g,1),T-\mathrm{vol}(U)}(H_{\pl D_R})}{Z_{g,T,G_N}}
\]
with respect to $\YM_{D_R,G_N}.$ The first claim then follows with the same argument as for the proof of Theorem \ref{__THM:Disc dYM}. To prove the second claim,  for any  $\psi(\ell)$ with $\ell\in\Ld(\overline D_R),$ consider a sequence of loops piecewise 
geodesic loops,  included in an open disc,  converging towards $\psi(\ell)$ for \eqref{Distance Loops}.  The claim then follows adapting the uniform continuity estimate\footnote{The latter estimate is presented in \cite{CDG} for the plane; we claim it applies to 
any compact surface. See \cite[Sect. 5]{DN} where this argument is detailed for the sphere.}  (11) of  \cite[Sect. 4]{CDG}. 
%\mathcal{M}(\mathrm{P}(D_R),G).$  Thanks to Lemma \ref{_____Lem:Density dYM}, the  push-forward  $\tilde\YM_{D_R}=\Psi_*(\YM_\Sigma)$ of the measure $\YM_\Sigma$ then satisfies the following. Whenever $E_M\subset P(D_R)$ 
%is 
%an embedding of an area-weighted map $(M,a)$ of the plane  with infinite face $f_*,$ whose boundary represents $\pl D_R,$
%$${\mathscr{R}_{E_M}}_*(\tilde\YM_{D_R})(dh)=\frac{\psi_{g,T-\|a\|_1}(h_{\delta f_*})}{Z_{g,T}}\YM_{M,a}(dh).$$
%For any $h\in G,$ denote by $[h]$ its conjugacy class.  The last identity implies that  ${\mathscr{R}_{E_M}}_*(\tilde\YM_{D_R})(dh|[h_{\delta f_*}]) = \YM_{M,a}(dh| [h_{\delta f_*}]).$ Choosing $\Sigma=\overline{D_R}$ Since 1. and 2. of Theorem are satisfied for the 
%boundary condition $H_{\delta D_R}= [a]$
\end{proof}

%Under $YM^{\Gbb',\Sigma}$ has density 
%
%with respect to $YM_{\Gbb',\R^2}$
%where 
%\begin{align*}
%\psi_{g,s}(U)&=\int_{G^{2g}}   p_s(U^{-1} [A_1,B_1]\ldots [A_g,B_g]) dA_1dB_1\ldots dA_g dB_g\\
%%&=\int_{G^{2g}}   p_s(U [A_1,B_1]\ldots [A_g,B_g]) dA_1dB_1\ldots dA_g dB_g\\
%&=  \sum_{\lambda\in\widehat{G}} \frac{\chi_\l(U^{-1})}{d_\lambda^{2g-1}}e^{-\frac{s}{2}c_\lambda}
%\end{align*}
%for any $U\in G_r.$ Since the representation is unitary, $|\chi_\l(U)|\le \chi_\l(1)$ and for any $s>0$ the latter sum absolutely converges with 
%\[
%\|\psi_{g,s}\|_{\infty}\le Z_{g,s,X_N}.
%\]
%Since under $YM_{\Gbb',\R^2,N}$,  $\tr(h_\ell )$ converges in probability towards $\Phi(\ell)$, it follows that for any $\varepsilon>0,$ 
%$$\PP_{\Sigma}(|\tr(h_\ell)- \Phi(\ell)|>\varepsilon)\le  \frac{Z_{g,s,X_N}}{Z} \PP_{\R^2}(|\tr(h_\ell)- \Phi(\ell)|>\varepsilon). $$

\subsection{Simple non-contractible loops}

Consider a simple non-contractible loop $\ell$ on a compact surface $\Sigma$ of genus $g\geq 1$.  Two possibilities occur when cutting $\Sigma$ along $\ell.$ If the surface is cut into two surfaces with exactly one connected boundary component, each with genus at least  $1$\footnote{Indeed, if one of the boundary components has genus 0, it implies that $\ell$ is contractible.}, then the loop is \emph{separating}. Otherwise, the new surface has one connected  component and two boundary components, and the loop is called \emph{non-separating}. Both cases are illustrated in Fig. \ref{fig:gluing} below. We refer to \cite{Sti} for details on these loops. It is now important to understand how the Yang--Mills measure interacts with surgery: we therefore begin with a few results that will help us in the next section.

\begin{figure}[!h]
\centering
\includegraphics[scale=0.5]{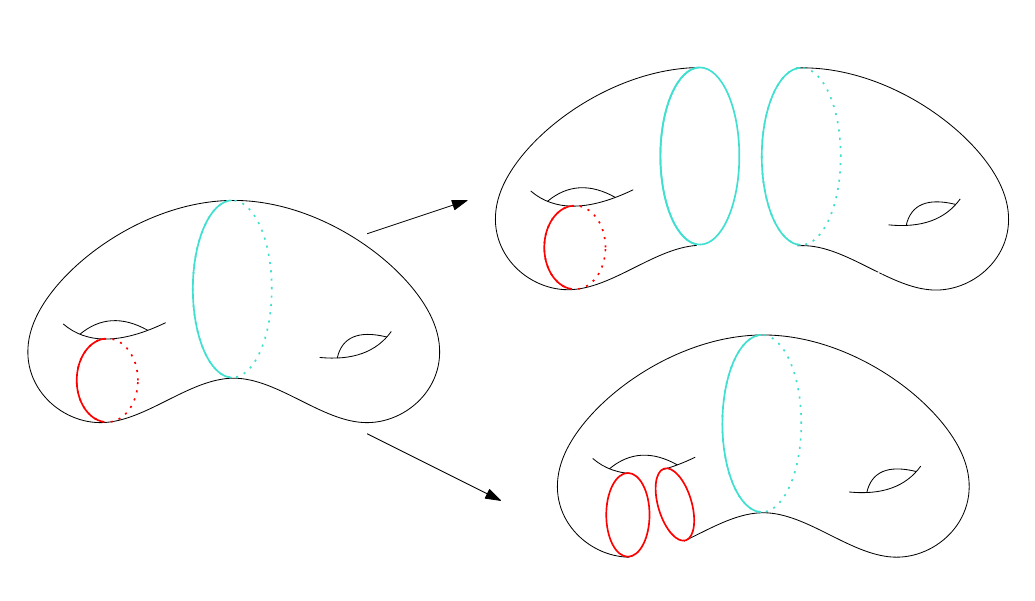}
\caption{\small The surface on the left can be seen as the result of a binary gluing of two surfaces along a separating loop (top right), or of one surface along a nonseparating loop (bottom right).}\label{fig:gluing}
\end{figure}

\subsubsection{Separating loops}

%Consider first two compact connected orientable surfaces $\Sigma_1$ and $\Sigma_2$ such that $\partial\Sigma_1$ and $\partial\Sigma_2$ are connected. Denote by $L_1$ and $L_2$ the corresponding loops, oriented positively. Let $\psi:L_1\to L_2$ be an orientation-reversing diffeomorphism, and $\Sigma$ be the gluing of $\Sigma_1$ and $\Sigma_2$ along $\psi$. Let $L$ be the corresponding loop in the surface $\Sigma$. For $i\in\{1,2\}$, denote by $\tilde{\mathcal{C}}_i$ the sigma-field generated by $(H_{\gamma_1},\ldots,H_{\gamma_n},n\geq 1,\gamma_i\in\mathrm{P}(\Sigma_i))$.  For any $i$, a function $f:\mathcal{M}(\mathrm{P}(\Sigma),G_N)\to\C$ is $\tilde{\mathcal{C}}_i$-measurable if and only if $f\circ\mathscr{R}_{\Sigma_i}:\mathcal{M}(\mathrm{P}(\Sigma_i),G_N)\to\C$ is $\mathcal{C}_i$-measurable. The next theorem is a particular case of \cite[Thm. 5.1.1]{Lev3}.

%
Consider first two compact connected orientable surfaces $\Sigma_1$ and $\Sigma_2$ such that $\partial\Sigma_1$ and $\partial\Sigma_2$ are connected. Denote by $L_1$ and $L_2$ the corresponding loops, oriented positively. Let $\psi:L_1\to L_2$ be an orientation-reversing diffeomorphism, and $\Sigma$ be the gluing of $\Sigma_1$ and $\Sigma_2$ along $\psi$. Let $\ell$ be the corresponding simple loop in the surface $\Sigma$. For $i\in\{1,2\}$, denote by $\tilde{\mathcal{J}}_i$ the sigma-field generated by $(H_{\ell_1},\ldots,H_{\ell_n},n\geq 1,\ell_i\in\mathrm{L}(\Sigma_i))$.  For any $i$, a function $f:\mathcal{M}(\mathrm{P}(\Sigma),G_N)\to\C$ is $\tilde{\mathcal{J}}_i$-measurable if and only if $f\circ\mathscr{R}_{\Sigma_i}:\mathcal{M}(\mathrm{P}(\Sigma_i),G_N)\to\C$ is $\mathcal{J}_i$-measurable. The next theorem is a particular case of \cite[Thm. 5.1.1]{Lev3}; a different version\footnote{applied to non-trivial bundles} is given in  \cite[Thm.  1]{Sen6} (and informally in (1)).

\begin{thm}\label{thm:split_separ}
The sigma-fields $\tilde{\mathcal{J}}_1$ and $\tilde{\mathcal{J}}_2$ are independent on $\mathcal{M}(\mathrm{P}(\Sigma),G_N)$ under $\YM_{\Sigma,G_N}$ conditionally on the random variable $[H_\ell]$. Moreover, for any $\tilde{\mathcal{J}}_i$-measurable function $f_i:\mathcal{M}(\mathrm{P}(\Sigma),G_N)\to\C$, for $i\in\{1,2\}$, the product $f_1f_2$ is measurable with respect to $\mathcal{J}$ and  the following equality holds true for any $t\in G_N/\mathrm{Ad}$:
\begin{align}
\int f_1(h)f_2(h)&\YM_{\Sigma,C_{\ell\mapsto t},G_N}(dh)\nonumber\\ &=\int f_1\circ\mathscr{R}_{\Sigma_1}(h)\YM_{\Sigma_1,t,G_N}(dh)\int f_2\circ\mathscr{R}_{\Sigma_2}(h)\YM_{\Sigma_2,t^{-1},G_N}(dh).
\end{align}
\end{thm}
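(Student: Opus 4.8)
The statement to prove is Theorem~\ref{thm:split_separ}, asserting conditional independence of $\tilde{\mathcal{J}}_1$ and $\tilde{\mathcal{J}}_2$ given $[H_\ell]$, together with a factorisation formula for the constrained Yang--Mills measure.

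The plan is to reduce everything to the discrete (combinatorial) picture, where the statement becomes a computation with Haar measures and heat kernels, and then pass to the continuous limit using the compatibility of $\YM_\Sigma$ with rectifiable maps (the theorem of L\'evy quoted above). First I would choose a rectifiable topological map $M$ on $\Sigma$ such that $\ell$ is realised as a simple loop of $M$ and such that $M$ restricts to maps $M_1$ on $\Sigma_1$ and $M_2$ on $\Sigma_2$; concretely, $M$ is obtained by gluing a map $M_1$ of $\Sigma_1$ and a map $M_2$ of $\Sigma_2$ along the common boundary loop $\ell=e_{0,1}\cdots e_{0,n}$. By the compatibility Lemma~\ref{_____Lem: Compat} (and the disintegration \eqref{eq:decondition} / \eqref{-----eq:deconditionCont} defining $\YM_{\Sigma,C_{\ell\mapsto t},G_N}$), it suffices to prove the claim for the discrete measures $\YM_{M,C_{\ell\mapsto t},a,G_N}$, with functions $f_i$ that are cylindrical in the edges of $M_i$ and gauge-invariant (using the gauge-invariance Lemma to write them through loops based at a common vertex).

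\textbf{Key steps.} (1) Write out $\YM_{M,C_{\ell\mapsto t},a,G_N}$ explicitly: the density $\prod_{f\in\Fbb} p_{a_f}(h_{\partial f})$ against $\mathrm{U}_{M,C_{\ell\mapsto t},G_N}$, and observe that the set of faces splits as $\Fbb=\Fbb_1\sqcup\Fbb_2$ according to whether a face lies in $\Sigma_1$ or $\Sigma_2$, so the density factorises as a product of a $\Sigma_1$-part and a $\Sigma_2$-part. (2) Similarly the edge set splits, up to the shared edges $e_{0,1},\dots,e_{0,n}$ of $\ell$; the reference measure $\mathrm{U}_{M,C_{\ell\mapsto t},G_N}$ is a product of Haar measures over the private edges of $M_1$ and of $M_2$ together with the single conditional measure $\delta_{t(n)}(dg_{0,1}\cdots dg_{0,n})$ on the shared edges. (3) Condition on the values $g_{0,1},\dots,g_{0,n}$ of the shared-edge variables: given these, the remaining integrals over the private edges of $M_1$ and of $M_2$ are completely decoupled, which already gives conditional independence of $\tilde{\mathcal J}_1$ and $\tilde{\mathcal J}_2$ given $(g_{0,i})_i$, hence given the weaker data $[H_\ell]=[g_{0,1}\cdots g_{0,n}]$ once one checks (via gauge-invariance and the fact that $f_i$ factor through loops) that the conditional laws of $f_1$ and $f_2$ depend on $(g_{0,i})_i$ only through the conjugacy class of their product. (4) For the factorisation formula: integrate out the shared-edge variables against $\delta_{t(n)}$; using that $t(n)$ is a homogeneous space and that the conditional Haar measure pushes forward under $(x_1,\dots,x_n)\mapsto x_1\cdots x_n$ to the invariant measure on the conjugacy class $t$, one identifies the $\Sigma_1$-integral with $\int f_1\circ\mathscr R_{\Sigma_1}\,\YM_{\Sigma_1,t,G_N}$ and the $\Sigma_2$-integral with $\int f_2\circ\mathscr R_{\Sigma_2}\,\YM_{\Sigma_2,t^{-1},G_N}$, the inversion $t\mapsto t^{-1}$ coming from the orientation-reversing gluing diffeomorphism $\psi$ (so that $\ell$ traversed from the $\Sigma_2$-side is $\ell^{-1}$ from the $\Sigma_1$-side). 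Here the normalisation constants $Z_{M,a,G_N}(\ell;t)=Z_{(g_1,1),|a_1|}(t)\,Z_{(g_2,1),|a_2|}(t)$ must be tracked and shown to match, which follows from \eqref{-----eq:PF one Boundary} and the central/inversion-invariance of the heat kernel. (5) Finally lift to the continuous setting: take an exhausting sequence of finer and finer rectifiable maps, use Lemma~\ref{_____Lem: Compat} and property (A3) of Markovian holonomy fields together with the convergence statement (3) of the L\'evy theorem above to pass the identity to arbitrary $\g\in\mathrm P(\Sigma)$, and note $f_1 f_2$ is $\mathcal J$-measurable since each $f_i$ is.

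\textbf{Main obstacle.} The routine part is the Haar/heat-kernel bookkeeping; the genuinely delicate point is step~(3)--(4): carefully justifying that the conditional law depends only on the conjugacy class $[H_\ell]$ and not on the full tuple of shared-edge holonomies, and getting the direction of the inversion $t\leftrightarrow t^{-1}$ right. This requires invoking the gauge-invariance Lemma to replace the cylindrical functions $f_i$ by functions of based loops in $\mathrm L_v(M_i)$, and using the $G_N/\mathrm{Ad}$-valued (rather than $G_N$-valued) nature of the constraint, exactly as in the boundary-condition discussion preceding the statement; this is also where one must be careful that the choice of map $M$ and of the identification of $\ell$ with a boundary loop of each $\Sigma_i$ is irrelevant, which is precisely the content of the cited Theorem~5.1.1 of \cite{Lev3} that we are reproving in this special case. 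Since the result is quoted as a particular case of \cite[Thm.~5.1.1]{Lev3}, the honest thing is to present this reduction and then either carry out the discrete computation in full or simply cite \cite{Lev3}; I would do the former in enough detail to make the conjugacy-class dependence and the orientation bookkeeping transparent.
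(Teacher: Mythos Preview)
The paper does not prove this theorem at all: it is stated as a quoted result, introduced by ``The next theorem is a particular case of \cite[Thm.~5.1.1]{Lev3}'' and followed immediately by its corollary. So there is no ``paper's own proof'' to compare against; the paper simply cites L\'evy.

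Your proposal is a correct outline of how one would actually prove the result from scratch, and indeed it follows the architecture of L\'evy's proof in \cite{Lev3}: reduce to the discrete picture via compatibility with rectifiable maps, split faces and edges of a glued map $M=M_1\cup_\ell M_2$, factor the heat-kernel density, and use gauge invariance to see that the dependence on the shared edges is only through the conjugacy class $[H_\ell]$. You also correctly identify the only genuinely non-mechanical step (the conjugacy-class dependence and the $t\leftrightarrow t^{-1}$ bookkeeping from the orientation-reversing gluing). Your honest closing remark---that one should either carry out the discrete computation or just cite \cite{Lev3}---is exactly the choice the authors made, and they chose the latter. For a write-up matching the paper, a one-line citation suffices; your sketch would be appropriate if the goal were a self-contained exposition.
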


This theorem provides a sort of spacial Markov property, which is a consequence of the semigroup property of the heat kernel. It has the following application.

\begin{coro}
Let $\ell$ be a separating loop in a closed, area-weighted map $(M,a)$ of genus $g\geq 2$, splitting $M$ into two respective maps $M_1$ and $M_2$ of genus $g_1,g_2$ and total area $T_1$ and $T_2$. Let $f:\mathcal{M}(\mathrm{P}(M),G_N)\to\C$ be a bounded $\tilde{\mathcal{J}}_1$-measurable function. We have
\begin{equation}
\int_{\mathcal{M}(\mathrm{P}(M),G_N)}f(h)\YM_{M,a,G_N}(dh)=\int_{\mathcal{M}(\mathrm{P}(M_1),G_N)} f\circ\mathscr{R}_{M_1}^M(h)I(h_\ell)\YM_{M_1,a,G_N}(dh),\label{-----eq:Disint one side}
\end{equation}
where for any $x\in G_N$,
{
\[
I(x)= \frac{ Z_{(g_2,1),T_2,G_N}(x^{-1})}{Z_{g,T,G_N}}.
\]}
\end{coro}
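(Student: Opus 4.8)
The plan is to apply Theorem \ref{thm:split_separ} after deconditioning on the constraint along $\ell$, exactly as in the proof of Lemma \ref{_____Lem:Density dYM} but now cutting the surface into two genuine pieces rather than peeling off a disc. First I would use the disintegration formula \eqref{eq:decondition} for the closed map $(M,a)$ and the simple loop $\ell$: since $\ell$ is separating, it is a simple loop of (a refinement of) $M$, so
\[
Z_{M,a,G_N}\YM_{M,a,G_N}=\int_{G_N/\mathrm{Ad}}Z_{M,a,G_N}(\ell;t)\,\YM_{M,C_{\ell\mapsto t},a,G_N}\,dt,
\]
with $Z_{M,a,G_N}=Z_{g,T,G_N}$. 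Integrating the $\tilde{\mathcal{J}}_1$-measurable function $f$ against this identity and invoking Theorem \ref{thm:split_separ} with $f_1=f$ and $f_2\equiv 1$, the right-hand factor collapses to $\int \YM_{\Sigma_2,t^{-1},G_N}(dh)=1$, leaving
\[
\int f(h)\,\YM_{M,C_{\ell\mapsto t},a,G_N}(dh)=\int f\circ\mathscr{R}_{M_1}^M(h)\,\YM_{M_1,t,G_N}(dh).
\]

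Next I would identify the deconditioning constant $Z_{M,a,G_N}(\ell;t)$. By Theorem \ref{thm:split_separ} applied to $f_1=f_2\equiv 1$, together with the observation that the normalisation of $\YM_{M,C_{\ell\mapsto t},a,G_N}$ relative to the unconstrained measure factors through the two pieces, one gets $Z_{M,a,G_N}(\ell;t)=Z_{M_1,a,G_N}(t)\,Z_{M_2,a,G_N}(t^{-1})=Z_{(g_1,1),T_1,G_N}(t)\,Z_{(g_2,1),T_2,G_N}(t^{-1})$, where here I use the notation of Proposition \ref{______Prop:fourier_pf} for the partition function of a surface with one boundary component. (Alternatively one can read this off directly from the density \eqref{------eq:DS form} by grouping the face weights of $M$ into those of $M_1$ and those of $M_2$, as in the example computations of Section \ref{sec:example}.) Substituting into the disintegration identity and writing the integral over $t\in G_N/\mathrm{Ad}$ back as an integral over the law of $[H_\ell]$ under $\YM_{M_1,\cdot,G_N}$, I would use that under $\YM_{M_1,a,G_N}$ the conditional law of the configuration given $[H_\ell]=t$ is precisely $\YM_{M_1,t,G_N}$; hence
\[
\int f(h)\,\YM_{M,a,G_N}(dh)=\frac{1}{Z_{g,T,G_N}}\int_{\mathcal{M}(\mathrm{P}(M_1),G_N)} f\circ\mathscr{R}_{M_1}^M(h)\,Z_{(g_2,1),T_2,G_N}(h_\ell^{-1})\,\YM_{M_1,a,G_N}(dh),
\]
which is exactly the claimed formula with $I(x)=Z_{(g_2,1),T_2,G_N}(x^{-1})/Z_{g,T,G_N}$. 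The genus bookkeeping $g=g_1+g_2$ and area bookkeeping $T=T_1+T_2$ come from the gluing, and since $g\geq 2$ with both pieces of genus $\geq 1$ the formula \eqref{------eq:fourier_pf_b} for $Z_{(g_2,1),T_2,G_N}$ applies.

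The main obstacle is bookkeeping rather than conceptual: one must make sure that $\ell$ can be realised as a simple loop of a map fine enough that both $M_1$ and $M_2$ are honest sub-maps with a common boundary loop $\partial U=\ell$, so that Lemma \ref{_____Lem: Compat} lets us reduce $W_\ell$-type statistics to this fine map without changing laws, and that the orientation conventions ($t$ versus $t^{-1}$) are consistent between Theorem \ref{thm:split_separ} and the disintegration \eqref{eq:decondition}. Once the correct orientation of $\ell$ relative to $\Sigma_1$ and $\Sigma_2$ is fixed, the boundary condition seen by $M_2$ is $t^{-1}=[h_\ell^{-1}]$, which is why $I$ is evaluated at $x^{-1}$; the conjugation-invariance and inversion-invariance of $Z_{(g_2,1),T_2,G_N}(\cdot)$ (immediate from \eqref{------eq:fourier_pf_b} since irreducible characters of the classical groups are real or come in conjugate pairs summed symmetrically, and in any case $Z_{(g_2,1),T_2}$ is a class function invariant under inversion because the heat kernel is) makes the final expression well-defined as a function of $[H_\ell]\in G_N/\mathrm{Ad}$.
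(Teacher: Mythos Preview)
Your proposal is correct and follows essentially the same route as the paper's proof: disintegrate $\YM_{M,a,G_N}$ along $\ell$ via \eqref{eq:decondition}, apply Theorem~\ref{thm:split_separ} to factor the constrained measure, identify $Z_{M,a,G_N}(\ell;t)=Z_{(g_1,1),T_1}(t)\,Z_{(g_2,1),T_2}(t^{-1})$, replace $t$ by $[h_\ell]$ inside the $M_1$-integral, and then recombine over $t$ using the disintegration \eqref{-----eq: Disint YM BD} for $M_1$ (whose unconstrained partition function is $1$ since $M_1$ has boundary). The only cosmetic difference is that the paper computes the factorisation of $Z_{M,a,G_N}(\ell;t)$ by passing to an explicit one-face-per-piece map, whereas you invoke Theorem~\ref{thm:split_separ} with $f_1=f_2\equiv 1$ or the face-weight grouping; both work.
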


\begin{proof}
To lighten the notation, we will drop in this proof the subscripts $G_N$, as the structure group remains fixed. First of all, using \eqref{eq:decondition}, the left-hand-side of \eqref{-----eq:Disint one side} equals 
 \begin{align*}
\frac{1}{Z_{g,T}}\int_{G/\mathrm{Ad}}\int_{\mathcal{M}(\mathrm{P}(M),G_N)}f(h) Z_{g,T}(\ell;t)\YM_{M,C_{\ell\mapsto t},a}(dh)dt.
\end{align*}
%\int_{\mathcal{M}(\mathrm{P}(M),G_N)}f(h)\YM_{M,a}(dh)=
%Here $ Z_{g,T}(t)$ is  not given by Prop. \ref{______Prop:fourier_pf}, because here the condition is not on a boundary but on an inner loop. 

Let us  compute $Z_{g,T}(\ell;t)$. Using the invariance property of Yang--Mills measure by subdivision, one can assume without loss of generality that that for $i\in\{1,2\},$ $M_i$ has 1 vertex $v^{(i)}$, $2g_i+1$ edges $a_1^{(i)},b_1^{(i)},\ldots,a_g^{(i)},b_g^{(i)},e^{(i)}$ all with $v^{(i)}$ as source and target, and 1 face whose boundary is given by ${e^{(i)}}^{\varepsilon_i}[a_1^{(i)},b_1^{(i)}]\cdots[a_{g_i}^{(i)},b_{g_i}^{(i)}]$ where $\varepsilon_1=1$ and $\varepsilon_2=-1$. The map $M$ can then be defined as the quotient of $M_1\cup M_2$ by the relations $v^{(1)}\sim v^{(2)}$ and $(e^{(1)})^{-1}\sim e^{(2)}$. We obtain
\begin{align*}
Z_{g,T}(\ell;t)= & \int_{G_N^{2g+1}}p_{T_1}(x[y_1,z_1]\cdots[y_{g_1},z_{g_1}])\\
&\times p_{T_2}(x^{-1}[y'_1,z'_1]\cdots[y'_{g_2},z'_{g_2}])dx\prod_{\substack{1\leq i\leq g_1\\ 1\leq j\leq g_2}} dy_idz_idy'_jdz'_j\\
= & Z_{(g_1,1),T_1}(t)Z_{(g_2,1),T_2}(t^{-1}).
\end{align*}
%where the  Prop. \ref{______Prop:fourier_pf}. 
From this result and from Theorem \ref{thm:split_separ} we deduce
\begin{align*}
& \int_{\mathcal{M}(\mathrm{P}(M),G_N)}f(h)\YM_{M,a}(dh)\\
& = \frac{1}{Z_{g,T}}\int_{G/\mathrm{Ad}}\Bigg[\int_{\mathcal{M}(\mathrm{P}(M_1),G_N)}f\circ\mathscr{R}_{M_1}^M(h)Z_{(g_1,1),T_1}(t)\YM_{M_1,t,a}(dh)\\
& \times \int_{\mathcal{M}(\mathrm{P}(M_2),G_N)}Z_{(g_2,1),T_2}(t^{-1})\YM_{M_2,t^{-1},a}(dh)\Bigg]dt.
\end{align*}
The integral over $\mathcal{M}(\mathrm{P}(M_2),G_N)$ is simply equal to $Z_{(g_2,1),T_2}(t^{-1})$; furthermore, under $\YM_{M_1,t,a}$, $[H_\ell]=t,$ so that we have
\begin{align*}
&Z_{g,T}  \int_{\mathcal{M}(\mathrm{P}(M),G_N)}f(h)\YM_{M,a}(dh)\\
& = \int_{G/\mathrm{Ad}}\int_{\mathcal{M}(\mathrm{P}(M_1),G_N)}f\circ\mathscr{R}_{M_1}^M(h) Z_{(g_1,1),T_1}(h_\ell)Z_{(g_2,1),T_2}(h_\ell^{-1})\YM_{M_1,t,a}(dh)dt.
\end{align*}
Finally, using the disintegration formula \eqref{-----eq: Disint YM BD} leads to the expected equality.
% \eqref{eq:decondition} gives the expected equality.
\end{proof}

To prove Theorem \ref{__THM:simple non-contrac}, we shall need the following  similar but simpler lemma.

\begin{lem}\label{cor:Density_sep}
Let $\ell$ be a separating loop in a closed, area-weighted map $(M,a)$ of genus $g\geq 2$, splitting $M$ into two respective maps $M_1$ and $M_2$ of genus $g_1,g_2$ and total area $T_1$ and $T_2$. Under $\YM_{M,a,G_N}$, the random variable $H_\ell$ has density
\begin{equation*}
\frac{Z_{(g_1,1),T_1,G_N} (h)Z_{(g_2,1),T_2,G_N} (h^{-1})}{Z_{g,T,G_N}}
\end{equation*}
with respect to the Haar measure on $G_N.$
%, where  for all $g\ge 1, s>0$,  $\psi_{g,s}$ is defined by $\psi_{g,s,G_N}:h\mapsto Z_{g,s,G_N}([h])$.
\end{lem}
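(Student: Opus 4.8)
The plan is to follow the same computation as in the proof of the preceding corollary, but with the test function removed, which reduces the argument to a direct application of formula~\eqref{-----eq:PF one Boundary}. First I would use the subdivision invariance of Lemma~\ref{_____Lem: Compat}, together with the invariance of $H_\ell$ under rerooting, to replace $(M,a)$ by a convenient representative: for $i\in\{1,2\}$ let $M_i$ have a single vertex $v^{(i)}$, edges $a^{(i)}_1,b^{(i)}_1,\ldots,a^{(i)}_{g_i},b^{(i)}_{g_i},e^{(i)}$ all based at $v^{(i)}$, and one face of area $T_i$ with boundary $(e^{(i)})^{\varepsilon_i}[a^{(i)}_1,b^{(i)}_1]\cdots[a^{(i)}_{g_i},b^{(i)}_{g_i}]$, where $\varepsilon_1=1$ and $\varepsilon_2=-1$; then obtain $M$ as the quotient of $M_1\sqcup M_2$ by $v^{(1)}\sim v^{(2)}$ and $(e^{(1)})^{-1}\sim e^{(2)}$, so that $\ell$ is the loop traced by $e^{(1)}$ and $H_\ell=h_{e^{(1)}}$. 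Since both the law of $H_\ell$ and the partition functions appearing in the statement are unaffected by this reduction, it suffices to establish the formula for this model.

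Writing $x=h_{e^{(1)}}$, $y_k=h_{a^{(1)}_k}$, $z_k=h_{b^{(1)}_k}$, $y'_l=h_{a^{(2)}_l}$, $z'_l=h_{b^{(2)}_l}$, the measure $\YM_{M,a,G_N}$ reads
\[
\frac{1}{Z_{g,T,G_N}}\,p_{T_1}\!\big(x[y_1,z_1]\cdots[y_{g_1},z_{g_1}]\big)\,p_{T_2}\!\big(x^{-1}[y'_1,z'_1]\cdots[y'_{g_2},z'_{g_2}]\big)\,dx\prod_{k=1}^{g_1} dy_k\,dz_k\prod_{l=1}^{g_2} dy'_l\,dz'_l .
\]
To obtain the law of $H_\ell=x$ I would integrate out all the remaining variables. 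Integrating over $(y_k,z_k)_{k\le g_1}$ and invoking \eqref{-----eq:PF one Boundary} produces $Z_{(g_1,1),T_1,G_N}(x)$, and integrating over $(y'_l,z'_l)_{l\le g_2}$ produces $Z_{(g_2,1),T_2,G_N}(x^{-1})$. Hence $H_\ell$ has density
\[
\frac{Z_{(g_1,1),T_1,G_N}(h)\,Z_{(g_2,1),T_2,G_N}(h^{-1})}{Z_{g,T,G_N}}
\]
with respect to the Haar measure on $G_N$, which is the claim. Equivalently, one may apply the preceding corollary to $f(h)=u(h_\ell)$ with $u$ an arbitrary central continuous function on $G_N$, using that under $\YM_{M_1,a,G_N}$ the holonomy $H_\ell$ already has density $Z_{(g_1,1),T_1,G_N}(\cdot)$ with respect to Haar measure, and that a central density on a compact group is determined by its integrals against central functions.

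I do not anticipate a genuine obstacle: the only delicate point is the bookkeeping of orientation and base point for the glued edge $e^{(1)}\sim(e^{(2)})^{-1}$, but this is harmless because $Z_{(g,1),T,G_N}$ is a central function, so conjugating $x$ leaves the value unchanged, while reversing the orientation of $\ell$ merely interchanges the factors $Z_{(g_1,1),T_1,G_N}(h)$ and $Z_{(g_2,1),T_2,G_N}(h^{-1})$, consistently with the symmetric form of the statement.
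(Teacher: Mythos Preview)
Your proposal is correct and follows essentially the same route as the paper: reduce to the minimal map $M$ glued from the one-face maps $M_1,M_2$ introduced in the previous corollary, write down the explicit density of $\YM_{M,a,G_N}$, and integrate out all variables except $x=h_\ell$ using \eqref{-----eq:PF one Boundary}. The paper phrases the last step by testing against an arbitrary central bounded function $f$, whereas you marginalise directly; the computation is the same.
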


%\begin{proof}
%1. Let $f:G\to\C$ be a bounded central measurable function. It corresponds to a measurable function $\tilde{f}:G/\mathrm{Ad}\to\C$. We have
%\[
%\E[f(H_\ell)]=\int_{\mathcal{M}(P(M),G} f(h_\ell)Z_{g_1,\|a\|}(t)\YM_{M,a,G}(dh),
%\]
%which yields
%\begin{align*}
%\E[f(H_\ell)]=&\frac{1}{Z_{g,\|a\|}}\int_{G/\mathrm{Ad}}\int_{\mathcal{M}(P(M),G} f(h_\ell)Z_{g_1,\|a\|}(t)\YM_{M,a,G}(t)(dh)dt\\
%= &\frac{1}{Z_{g,\|a\|}}\int_{G/\mathrm{Ad}} \tilde{f}(t)Z_{g_1,\|a\|}(t)\int_{\mathcal{M}(P(M),G} \YM_{M,a,G}(t)(dh)dt.
%\end{align*}
%The intgral over $\mathcal{M}(\mathrm{P}(M),G)$ is equal to 1 and we are left with
%\[
%\frac{Z_{g,\|a\|}(t)}{Z_{g,\|a\|}}.
%\]
%Note that the partition function has not the same meaning here than in Prop. \ref{______Prop:fourier_pf} because the condition is not on the boundary. However we can compute it as follows:
%
%2. 
%\end{proof}

\begin{proof}
Let us use the same $M,M_1$ and $M_2$ as in the previous proof. For any central function $f:G_N\to\C$ measurable and bounded, we have
\begin{align*}
\E_{\YM_{M,a,G_N}}[f(H_\ell)]= &\frac{1}{Z_{g,T,G_N}}\int_{G_N^{2g+1}} f(x)p_{T_1}(x[y_1,z_1]\cdots[y_{g_1},z_{g_1}])\\
& \times p_{T_2}(x^{-1}[y'_1,z'_1]\cdots[y'_{g_2},z'_{g_2}])dx\prod_{\substack{1\leq i\leq g_1\\ 1\leq j\leq g_2}} dy_idz_idy'_jdz'_j,
\end{align*}
thus by integrating over all variables but $x$ we obtain the expected result.
\end{proof}

\begin{proof}[Proof of Theorem \ref{__THM:simple non-contrac} for separating, non-contractible, simple loops]  Using Lemma \ref{cor:Density_sep} and the bound \eqref{------eq:Bound Dens gCap}, we find that under $\YM_{M,a}$, the density  of  $h_\ell$ with respect to the Haar measure is uniformly bounded by
$$\frac{Z_{g_1,T_1,G_N}Z_{g_2,T_2,G_N}}{Z_{g,T,G_N}}.$$
Since $\ell$ is not contractible, $g_1,g_2\ge 1,$ and according to Theorem \ref{__THM:PF}, the above density is uniformly bounded in $N.$ Now for a Haar distributed random variable $H$ on $G_N$ belonging to $X_r,$ according to \cite{DiaconisEvans}, 
whenever $n\not=0,$ $\tr(H^n)$ converges in probability towards $0$, as $N\to \infty.$ The claim now 
follows by absolute continuity similarly to 
the proof of  Theorem \ref{__THM:Disc dYM}.
\end{proof}
%
%Using again that $ \frac{|\chi_\l|}{\chi_\l(1)}$ is uniformly bounded by $1$, we get that 
%$$ \Psi_{t,T,N}(u)\le  \frac{Z_{t,g_1} Z_{T-t,g_2}}{Z_T}.$$
%Thanks to Theorem **** for the partition functions, it follows that
%$$\limsup_N\|\Psi_{t,T,N}\|_{\infty}<+\infty.$$
%Now, since  the claim holds true   for Haar measure on the considered groups  REF (see e.g. Collins $\&$ Sniady/Diaconis), the result follows. 

\subsubsection{Non-separating  loops}
\label{sec:Non-Sep Loop}

Let $\Sigma'$ be a compact connected orientable surface such that $\partial\Sigma'$ has two connected components; denote the corresponding loops, oriented positively, by $L_1$ and $L_2$. Let $\psi:L_1\to L_2$ be an 
orientation-reversing diffeomorphism, and $\Sigma$ be the gluing of $\Sigma'$ along $\psi$. Let $\ell$ be the corresponding loop in $\Sigma$. We have the following, which is a particular case of \cite[Thm. 5.4.1]{Lev3}. We use the same notation $\tilde{\mathcal{J}}'$ for the sigma-field on $\mathcal{M}(\mathrm{P}(\Sigma),G)$ generated by the loops in $\Sigma'$.

\begin{thm}
Let $f:\mathcal{M}(\mathrm{P}(\Sigma),G)\to\C$ be a $\tilde{\mathcal{J}}'$-measurable function. For any $t\in G/\mathrm{Ad}$, the following equality holds true:
\begin{equation}
\int f(h)\YM_{\Sigma,C_{\ell\mapsto t},G}(dh) = \int f\circ\mathscr{R}_{\Sigma'}(h)\YM_{\Sigma',(t,t^{-1}),G}(dh).
\end{equation}
\end{thm}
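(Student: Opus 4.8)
As the paper notes, this is a particular case of \cite[Thm.~5.4.1]{Lev3}, so one option is simply to invoke that result; let me instead sketch how I would recover it directly, in the same spirit as Theorem~\ref{thm:split_separ}. The plan is to reduce the identity to the \emph{discrete} Yang--Mills measures and then carry out a bookkeeping of the cut--and--glue operation. First I would use that $\tilde{\mathcal{J}}'$ is generated by the holonomies $H_\gamma$, $\gamma\in\Ld(M')$, as $M'$ runs over rectifiable maps on $\Sigma'$, and that such an $M'$ may always be refined so that its trace on $\partial\Sigma'=L_1\cup L_2$ is adapted to $\psi$; gluing along $\psi$ then turns $M'$ into a rectifiable map $M$ on $\Sigma$ in which $\ell$ is a loop, whose faces are in area--preserving bijection with those of $M'$ (cutting preserves the area measure). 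A monotone class argument reduces the claim to the case $f=F\circ\mathscr{R}_M$ with $F$ a bounded function of the holonomies along the edges of the copy of $M'$ sitting inside $M$, and by the continuous--discrete compatibility of \cite{Lev2} together with Lemma~\ref{_____Lem: Compat} it becomes the finite--dimensional identity
\[
\int F\,d\YM_{M,C_{\ell\mapsto t},\mathrm{vol},G}=\int F\,d\YM_{M',(t,t^{-1}),\mathrm{vol},G},
\]
where the same area function $\mathrm{vol}$ appears on both sides.

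Next I would prove this finite--dimensional identity by hand. Both measures have the form $Z^{-1}\prod_f p_{a_f}(h_{\partial f})$ times a constrained uniform measure, the faces and areas agree on the two sides, and the whole point is that the cut--and--glue dictionary between $M$ and $M'$ can be set up so that (i) the two families of face--boundary words are exchanged, so that the heat--kernel weights become literally the same function of the shared edge variables, and (ii) the constrained uniform measure $\mathrm{U}_{M,C_{\ell\mapsto t},G}$, which forces only $h_\ell$ into the conjugacy class $t$, is carried onto $\mathrm{U}_{M',(t,t^{-1}),G}$, which forces the holonomy of $L_1$ into $t$ and that of $L_2$ into $t^{-1}$. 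Point (ii) is where the hypothesis that $\psi$ is orientation--reversing enters: after the cut the holonomy of $L_2$ is, up to conjugation by the holonomy of a connecting arc, the inverse of the holonomy of $L_1$, and since conjugation and inversion preserve the normalized Haar measures on conjugacy classes while sending $t$ to $t^{-1}$, this is exactly what produces the class $t^{-1}$ on the second boundary component. With these two points in place, integrating out the edge variables that do not enter $F$ matches the two integrands; specializing to $F\equiv 1$ shows that the two partition functions agree, and the identity follows. The only analytic facts used are the conjugation-- and inversion--invariance of the heat kernel, the semigroup property $p_s*p_u=p_{s+u}$ (to merge faces when one simplifies the maps), and, if one prefers to perform the handle integrations explicitly, the character identities \eqref{------eq:int_conj}--\eqref{------eq:int_commu}.

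The hard part here is not analysis but the combinatorics of surgery: one must check that a $\psi$--adapted refinement of $M'$ is always available, and set up the precise correspondence between the edges of $M$ and those of $M'$ so that the face--boundary words match after the gluing substitution and the orientation--reversal of $\psi$ yields the \emph{inverse} conjugacy class $t^{-1}$ (and not $t$) on $L_2$ --- keeping careful track of base points throughout, which is where the conjugation by the connecting arc is hidden. This bookkeeping is precisely what is performed in general in \cite[Ch.~5]{Lev3}; in the write--up I would either reproduce the relevant special case or, more economically, cite it.
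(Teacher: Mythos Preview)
The paper does not prove this theorem at all: it is stated as ``a particular case of \cite[Thm.~5.4.1]{Lev3}'' and immediately used. Your proposal therefore goes further than the paper, supplying an outline of the argument that the cited reference carries out in full.

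Your sketch is sound and indeed follows the architecture of L\'evy's proof: reduce to the discrete setting via the compatibility of the continuous and discrete Yang--Mills measures, choose adapted maps $M$ on $\Sigma$ and $M'$ on $\Sigma'$ related by the cut, identify the face sets and area functions, and then check that the constrained uniform reference measures correspond under the cut. Your identification of the crucial point --- that the orientation reversal of $\psi$ forces the class $t^{-1}$ (not $t$) on the second boundary component, up to conjugation by the holonomy of a connecting arc --- is correct, and your acknowledgement that the real work is the combinatorial bookkeeping of edges and face--boundary words is apt. One small caution: when you write that ``integrating out the edge variables that do not enter $F$ matches the two integrands'', be aware that in the non-separating case there is an extra edge in $M$ (the arc joining the two preimages of the base point of $\ell$) that has no counterpart in $M'$; the Haar integration over this edge is precisely what collapses the single constraint on $\ell$ into the two boundary constraints on $L_1$ and $L_2$, and this step deserves to be made explicit if you do not simply cite \cite{Lev3}.
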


The consequence in terms of absolute continuity is the following.

\begin{coro}
Let $\ell$ be a non-separating simple loop in a closed, area-weighted map $(M,a)$ of genus $g\geq 1$. Under $\YM_{M,a,G}$, the random variable $H_\ell$ has density given by
\begin{equation*}
\frac{\varphi_{g,T}(h)}{Z_{g,|a|,G}},\forall h\in G
\end{equation*}
 with respect to the Haar measure, and  for all $g\ge 1, T>0$,  
\begin{equation}
\varphi_{g,T}(h)=\sum_{\l\in \widehat{G}}d_\l^{2-2g} |\chi_\l(h)|^2e^{-\frac{T}{2}c_\l},\forall h\in G. \label{------eq:DensYM NonS Loop}
\end{equation}
\end{coro}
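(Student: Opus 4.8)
The plan is to mimic the proof of Lemma \ref{cor:Density_sep} for the separating case, now using the non-separating splitting theorem stated just above. First I would fix a convenient area-weighted map $M$ representing the surgery: since $\Sigma$ is obtained from a surface $\Sigma'$ of genus $g-1$ with two boundary components $L_1,L_2$ glued along an orientation-reversing diffeomorphism identifying $L_1$ with $L_2^{-1}$, I would take $M'$ to be a one-vertex map on $\Sigma'$ with edges $e^{(1)},e^{(2)},a_1,b_1,\ldots,a_{g-1},b_{g-1}$ and a single face of area $T=|a|$ whose boundary reads $e^{(1)}[a_1,b_1]\cdots[a_{g-1},b_{g-1}](e^{(2)})^{-1}$ (or the appropriate normal form), and let $M$ be the quotient of $M'$ by $e^{(1)}\sim e^{(2)}$, so that $\ell$ is the common image of $e^{(1)}$ and $e^{(2)}$. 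Invariance of the Yang--Mills measure under subdivision (Lemma \ref{_____Lem: Compat}) justifies reducing to this map.

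Next, for a central bounded measurable $f:G\to\C$, I would write $\E_{\YM_{M,a,G}}[f(H_\ell)]$ using the Driver--Sengupta density \eqref{------eq:DS form} on this map. Setting $x=h_\ell$ and denoting the remaining edge variables by $y_i,z_i$, this expectation becomes
\[
\frac{1}{Z_{g,T,G}}\int_{G^{2g-1}} f(x)\, p_T\big(x[y_1,z_1]\cdots[y_{g-1},z_{g-1}]x^{-1}[y'_1,z'_1]\cdots\big)\,dx\prod dy_i\,dz_i,
\]
where the precise word inside $p_T$ depends on the chosen normal form; the key feature is that $x$ appears once as $x$ and once as $x^{-1}$, separated by products of commutators coming from the genus of $\Sigma'$. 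I would then expand $p_T$ via the Fourier decomposition \eqref{------eq:Fourier_HK}, so the integral over $\chi_\lambda$ of the big word is taken, and integrate out all $2(g-1)$ commutator variables using \eqref{------eq:int_commu}. Each commutator contributes a factor $d_\lambda^{-2}$; carefully tracking the conjugation structure $x(\cdots)x^{-1}(\cdots)$ one is left with an integral of the form $\int_G \chi_\lambda(x u x^{-1} u')$-type expressions which, after using \eqref{------eq:int_conj} and the orthogonality relations, collapse to $d_\lambda^{-(2g-2)}|\chi_\lambda(x)|^2$ times $e^{-\frac T2 c_\lambda}$. Summing over $\lambda$ and comparing with \eqref{------eq:fourier_pf} yields exactly $\varphi_{g,T}(x)/Z_{g,|a|,G}$ as the density of $H_\ell$ against Haar measure, using that $H_\ell$ is not quite central-distributed but its law is a class function, so testing against central $f$ suffices to identify the density (alternatively one conjugation-averages).

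The main obstacle I expect is bookkeeping the representation-theoretic integral correctly: after plugging in $\chi_\lambda$ of a long alternating word in $x$, $x^{-1}$ and two families of commutators, one must integrate the two commutator blocks separately and then recognise that what remains is $\int \chi_\lambda(x\cdot a\cdot x^{-1}\cdot b)$ with $a,b$ no longer free but already reduced to scalars times identity on the representation space. The cleanest route is to use that $\int_{G^{2k}}\varphi_\lambda([y_1,z_1]\cdots[y_k,z_k])\,\prod dy_i dz_i=d_\lambda^{-2k}\,\mathrm{Id}_{V_\lambda}$ as an endomorphism-valued identity (the operator version of \eqref{------eq:int_commu}, itself a consequence of Schur), so that the two blocks become $d_\lambda^{-2(g_1')}\mathrm{Id}$ and $d_\lambda^{-2(g_2')}\mathrm{Id}$ with $g_1'+g_2'=g-1$; then $\mathrm{Tr}(\varphi_\lambda(x)\,\mathrm{Id}\,\varphi_\lambda(x^{-1})\,\mathrm{Id})=|\chi_\lambda(x)|^2$ is immediate. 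With this operator-valued formulation the computation is short and the exponent $2-2g$ on $d_\lambda$ comes out as $1-2\cdot(\text{number of commutator pairs, }g-1)$ minus the one extra $d_\lambda$ from \eqref{------eq:Fourier_HK}, i.e. $1+2(g-1)$ inverse powers against one positive power, giving $d_\lambda^{2-2g}$, as claimed.
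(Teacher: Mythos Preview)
Your overall strategy---choose a convenient one-face map, write the Driver--Sengupta density, expand $p_T$ in characters, and integrate out the remaining edge variables---is the right one, but the execution contains a genuine error in both the normal form and the final operator computation.

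First, the word $e^{(1)}[a_1,b_1]\cdots[a_{g-1},b_{g-1}](e^{(2)})^{-1}$ does not yield a genus $g$ surface after identifying $e^{(1)}$ with $e^{(2)}$: the resulting closed one-face map has $2g-1$ edges, hence Euler characteristic $1-(2g-1)+1=3-2g$ rather than $2-2g$. More generally, no correct normal form for genus $g$ can have $x$ and $x^{-1}$ separated only by self-contained commutator blocks in variables independent of $x$.

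Second, and decisively, the identity you assert at the end is false:
\[
\Tr\big(\varphi_\lambda(x)\,\Id\,\varphi_\lambda(x^{-1})\,\Id\big)=\Tr(\varphi_\lambda(x)\varphi_\lambda(x)^{-1})=\Tr(\Id)=d_\lambda,
\]
not $|\chi_\lambda(x)|^2$. Your shortcut therefore produces $d_\lambda^{4-2g}$ in place of $d_\lambda^{2-2g}|\chi_\lambda(x)|^2$, and the resulting ``density'' integrates to $Z_{g-1,T}/Z_{g,T}\neq 1$. The two errors are really the same one: if the commutator blocks between $x$ and $x^{-1}$ integrate to scalars, the $x$-dependence disappears entirely.

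The paper proceeds differently. Since $\ell$ is non-separating, one may (by the classification of simple closed curves, \cite[Sect.~6.3.1]{Sti} or \cite[Sect.~1.3.1]{FM}) take $\ell$ to be one of the standard generators in the one-vertex, one-face map with boundary word $[a_1,b_1]\cdots[a_g,b_g]$, say $\ell=a_1$. Writing $u=h_\ell$ and $v=h_{b_1}$, the density of $H_\ell$ is
\[
Z_{g,T}^{-1}\int_{G^{2g-1}}p_T\big([u,v][x_2,y_2]\cdots[x_g,y_g]\big)\,dv\prod_{i=2}^{g}dx_i\,dy_i.
\]
Expanding in characters and integrating the $g-1$ trailing commutators via \eqref{------eq:int_commu} leaves $d_\lambda^{3-2g}\int_G\chi_\lambda([u,v])\,dv$. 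The crucial step is this last integral: the variable $v$ appears on both sides of $u^{-1}$, and \eqref{------eq:int_conj} (equivalently $\int_G\varphi_\lambda(vu^{-1}v^{-1})\,dv=d_\lambda^{-1}\chi_\lambda(u^{-1})\,\Id$) gives
\[
\int_G\chi_\lambda([u,v])\,dv=d_\lambda^{-1}\chi_\lambda(u)\chi_\lambda(u^{-1}).
\]
The factor $|\chi_\lambda(u)|^2$ thus arises precisely from the entanglement of $u$ with its commutator partner $v$, which is the ingredient your setup discards.
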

\begin{rmk} The above density can also be written as $Z_{(g,2),|a|}(h,h^{-1}).$
\end{rmk}
%Assume that $\ell$ is a non-separating, simple, non-contractible loop within an area-weighted map $(M,a)$ of genus $g\ge 1$.
%  
%\begin{lem} \label{_____Lem:Density Sep dYM} Under $\YM_{M,a}$, the random variable  $h_\ell$ has density given by
%\begin{equation*}
%\frac{\varphi_{\|a\|_1,g}(h)}{Z_{g,\|a\|_1,N}},\forall h\in G
%\end{equation*}
% with respect to the Haar measure, where  for all $g\ge 1, T>0$,  
%\begin{equation}
%\varphi_{g,T}(h)=\sum_{\l\in \widehat{G}}d_\l^{2-2g} |\chi_\l(h)|^2e^{-\frac{T}{2}c_\l},\forall h\in G. \label{------eq:DensYM NonS Loop}
%\end{equation}
%  \end{lem}
\begin{proof}  According to \cite[Sect. 6.3.1]{Sti}, or \cite[Sect. 1.3.1]{FM}, since $\ell$ is not separating, we can choose $M$ with $1$ vertex, $1$ face and $2g$ edges, one of which is $\ell$, and  writing $u=h_\ell,$ 
\begin{equation}
\YM_{M,a}(dh)=p_{T}([u,v][x_2,y_2]\cdots [x_g,y_g]) dudv \prod_{i=2}^{g}dx_idy_i.\label{------eq:DensYM NonS Loop}
\end{equation}
It follows that the law of $h_\ell$ under $\YM_{M,a}$ has density 
$$\varphi_{g,T}(u)=Z_{g,T,N}^{-1}\int_{G^{2g-1}} p_{T}([u,v][x_2,y_2]\cdots [x_g,y_g]) dv \prod_{i=2}^{g}dx_idy_i,\forall u\in G, $$
with respect to the Haar measure. Expanding in characters and using \eqref{------eq:int_commu} yields
$$\varphi_{g,T}(h)=\sum_{\l\in \hat G}e^{-\frac{T}{2}c_\l} I_\l(u) $$
with $$I_\l(u)= d_\l^{1-2(g-1)} \int_{G} \chi_\l([u,v])dv.  $$
Now \eqref{------eq:int_conj} yields $I_\l(u)= d_\l^{2-2g}\chi_\l(u)\chi_\l(u^{-1}), \forall u\in G$ and the claim.
\end{proof}

\begin{proof}[Proof of Theorem \ref{__THM:simple non-contrac} for $g\ge 2$ and non-separating, simple loops] Then, using that $\|\chi_\l \|_\infty=\chi_\l(1)$ for all $\l\in\widehat{G},$ 
$$\|\varphi_{g,T}\|_\infty= \varphi_{g,T}(1)=Z_{g-1,T,G_N},\forall g\ge 1,T>0.$$
It follows that under $\YM_{M,a}$, the density of $h_\ell$ with respect to the Haar measure is bounded by 
$$\frac{Z_{g-1,|a|,G_N}}{Z_{g,|a|,G_N}}.$$
When $g\ge 2,$ it follows from \ref{__THM:PF} that this sequence is uniformly bounded when $N\to \infty$. As  in the argument of the former section for the separating case, the claim follows from \cite{DiaconisEvans} and absolute continuity. 
\end{proof}  

When $g=1,$ the above argument of absolute continuity argument fails. Indeed for a total area $T,$ the maximum of the density is then given by $\frac{Z_{0,T,N}}{Z_{1,T,N}}.$ For $\tilde A_N,$ thanks to Theorem \ref{__THM: DK 
PT}, 
and more precisely formula (28) of \cite{LM},
$$\lim_{N\to\infty}\frac{1}{N^2}\log (Z_{0,T,N})= F(T)=\frac{T}{24}+\frac 3 4-\frac 1 2 \log(T),\forall 0<T\le \pi^2.$$
Since  $F(T)\ge F(\pi^2)>0$ for all $T\in(0,\pi^2],$  $\lim_{N\to \infty} Z_{0,T,N}= +\infty.$ 
Similarly to the strategy of \cite{Lev,DN,Hal2},  we shall consider the expectation and the variance of  the random variable $W_\ell.$ 
\begin{rmk} For any $z$ in the center of $G, $ under $\YM_{M,a},$ $z h_\ell$ has same law as $h_\ell.$  This can be checked changing variable in formula \eqref{------eq:DensYM NonS Loop}. Alternatively, Schur's lemma implies that 
$z$ 
acts  by 
a unitary scalar in any irreducible representation and \eqref{------eq:DensYM NonS Loop} implies that for any $\varphi_{g,T}(zh)=\varphi_{g,T}(h)$ for any $h\in G.$ Consequently whenever $D_z\in G$ is a diagonal matrix of multiplication 
by a 
scalar  $z\in\C,$ under $\YM_{M,a},$ $\tr(h_{\ell^n})=\tr(h_\ell^n)$  has same law as $z^n \tr(h_{\ell^n}).$ For $X_N=\{A_N,\tilde A_N\},$ the center is given by $\{\zeta \mathrm{Id}_{N+1} : \zeta^{N+1}=1\}$ and $\{z \mathrm{Id}_N: 
z\in\C, |z|=1\}.$  It follows that for $X=\tilde A,$ $\E[W_{\ell^n}]=0$ for any $n\not=0$, whereas for $X=A,$ $\E[W_{\ell^n} ]= 0$  for any $n\not=0$ modulo $N+1$. In particular, when $X\in\{A,\tilde A\},$ $\lim_{N\to\infty}
\E[W_{\ell^n}]=0.$
\end{rmk}

Unfortunately, the argument given in the above remark does not apply when $X=\{B,C,D\}$. Also, it does not give any information about $\mathrm{Var}(W_\ell)=\E[|W_\ell|^2]-|\E[W_\ell]|^2.$ Instead, we shall use the expansion in 
characters of the heat kernel 
and the following lemma, which is a consequence of the expression of characters as a ratio of alternated functions.
\begin{lem}\label{_____Lem:Pieri Gen} Let $G_N\subset\GL_n(\C)$ be a classical group of size $N$ and type $X_r$, for $X\in\{B,C,D\}$. For any $k\neq0$ and $\l \in \widehat{G}_N,$
\begin{equation}\label{eq:Pieri_BCD}
\Tr(g^k)\chi_\l(g)=\sum_{\mu\in\widehat{G}_N} c_{\l,k}^\mu \chi_{\mu}(g), 
\end{equation}
with
\begin{equation}\label{------eq:Loop  jumps}
c_{\l,k}^\mu \in \{-1,0,1\},\forall \mu\in\widehat{G}  _N
\end{equation}
and
\begin{equation}
\sum_{\mu\in\widehat{G}_N} |c_{\l,k}^\mu|\le n\label{------eq:Bound number  possible jumps}
\end{equation}
\end{lem}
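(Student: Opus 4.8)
The plan is to read off the expansion \eqref{eq:Pieri_BCD} from Weyl's character formula; it is a Murnaghan--Nakayama / Pieri type identity for the orthogonal and symplectic groups, and one may assume $k>0$ since $\Tr(g^{-k})=\Tr(g^{k})$ for every $g\in G_N$ when $X\in\{B,C,D\}$. Write $W$ for the Weyl group of $G_N$, namely the group of signed permutations of $e_1,\dots,e_r$ when $X\in\{B,C\}$ and its index-two subgroup of \emph{even} signed permutations when $X=D$, and, for $\gamma\in\mathscr V_r$, set $a_\gamma=\sum_{w\in W}\sgn(w)\,e^{w\gamma}$ in the group algebra of the weight lattice $I_r$. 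Under the identification $\widehat G_N\simeq\Lambda_r$, Weyl's character formula reads $\chi_\l=a_{\l+\rho}/a_\rho$. Since the eigenvalues of $g\in G_N$ in its defining representation are $x_1^{\pm1},\dots,x_r^{\pm1}$, together with one eigenvalue $1$ when $X=B$, one has on the maximal torus
\[
\Tr(g^{k})=\sum_{i=1}^{r}\bigl(e^{ke_i}+e^{-ke_i}\bigr)+\mathbf{1}_{X=B},
\]
which is $W$-invariant and a sum of $n$ monomials.

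First I would establish the multiplication rule underlying \eqref{eq:Pieri_BCD}. Because $\Tr(g^{k})$ is $W$-invariant, multiplying $a_{\l+\rho}$ by it and, for each $w\in W$, relabelling the weights of the defining representation by their $w^{-1}$-images (which only permutes them) gives
\[
\Tr(g^{k})\,a_{\l+\rho}=\sum_{j=1}^{r}\bigl(a_{\l+\rho+ke_j}+a_{\l+\rho-ke_j}\bigr)+\mathbf{1}_{X=B}\,a_{\l+\rho}.
\]
Next I would use the standard dichotomy: for any $\gamma\in\mathscr V_r$, either $\gamma$ lies on a wall and $a_\gamma=0$, or $\gamma$ is regular, $w\gamma\in\mathscr C_r$ for a unique $w\in W$, and $a_\gamma=\sgn(w)\,a_{w\gamma}$; by \eqref{------eq:smallest element} the dominant regular weight $w\gamma$ equals $\mu+\rho$ for a unique $\mu\in\Lambda_r$, whence $a_\gamma/a_\rho=\sgn(w)\,\chi_\mu$. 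Dividing the previous display by $a_\rho$ then yields \eqref{eq:Pieri_BCD}, in which $c^{\mu}_{\l,k}$ is the sum of the signs $\sgn(w)$ over the pairs $(j,\eta)\in\{1,\dots,r\}\times\{-1,1\}$ for which $\l+\rho+\eta ke_j$ is regular with dominant weight $\mu+\rho$, plus $1$ when $X=B$ and $\mu=\l$. Since there are $2r$ such pairs, plus the extra term when $X=B$, and each contributes at most $1$ in absolute value to a single $\mu$, this already gives $\sum_{\mu}|c^{\mu}_{\l,k}|\le 2r+\mathbf{1}_{X=B}=n$, which is \eqref{------eq:Bound number  possible jumps}.

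The remaining, and hardest, point is the sharper bound \eqref{------eq:Loop  jumps}, $c^{\mu}_{\l,k}\in\{-1,0,1\}$: the plan is to show that distinct non-trivially contributing pairs land on distinct $\mu$'s, except at $\mu=\l$ where a possible double contribution cancels. Writing $\alpha_1,\dots,\alpha_r$ for the coordinates of $\l+\rho$ (pairwise distinct, and positive when $X\in\{B,C\}$), a regular vector $\l+\rho+\eta ke_j$ has dominant representative determined by the coordinate multiset $\{\alpha_i:i\neq j\}\cup\{\,|\alpha_j+\eta k|\,\}$, and regularity forces $|\alpha_j+\eta k|\notin\{\alpha_i:i\neq j\}$. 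Comparing these multisets for two pairs, a short case distinction on whether $j=j'$ --- using that the $\alpha_i$ are distinct --- shows that two pairs yielding the same $\mu$ must in fact yield non-regular vectors, so contribute nothing; the sole exception is $\mu=\l$, reached through a single sign change by the pair $\eta=-1$, $k=2\alpha_j$, whose contribution $-\chi_\l$ (for $X=C$) is exactly cancelled by the $+\chi_\l$ coming from the $X=B$ term, and which, for $X=D$, is absent since a single sign change is not in $W$ (that pair then landing instead on the conjugate weight $\l^{\ast}=(\l_1,\dots,\l_{r-1},-\l_r)$). I expect the real difficulty to be concentrated in type $D_r$, whose dominant chamber allows a vanishing or negative last coordinate and whose Weyl group has index two, so that regularity and the signs $\sgn(w)$ must be tracked carefully there; the overall structure of the argument is the same for all three types.
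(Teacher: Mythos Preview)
Your approach is essentially the same as the paper's: both derive the expansion from Weyl's character formula by multiplying the alternating sum $a_{\lambda+\rho}$ by the $W$-invariant power sum $\Tr(g^k)$ and reading off signed characters after straightening back to the dominant chamber. The paper does this with explicit determinantal identities type by type, while you use the abstract $a_\gamma$ formulation; these are equivalent.

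One point worth noting: you are actually \emph{more} careful than the paper about the possibility that two distinct shifts $\lambda+\rho+\eta ke_j$ land on the same dominant $\mu+\rho$, which is exactly what is needed to secure $c^{\mu}_{\lambda,k}\in\{-1,0,1\}$; the paper simply lists the coefficients as $\varepsilon(\sigma_{[\mu^{\pm}_{k,\ell}]})$ without checking distinctness. Your collision analysis is correct, though your sentence about the cancellation at $\mu=\lambda$ is slightly garbled: the $-\chi_\lambda$ from the pair with $k=2\alpha_j$ is cancelled by the extra $+\chi_\lambda$ only in type $B$; in type $C$ there is no extra term and the coefficient is simply $-1$, which is still in $\{-1,0,1\}$. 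Your sketch for type $D$ is on the right track (a single sign change lies in $W_B\setminus W_D$, so the pair lands on $\lambda^{\ast}$ rather than $\lambda$, and when $\alpha_r=0$ the two shifts $(r,\pm1)$ share the same absolute-value multiset but lie in opposite $W_D$-orbits), and can be completed by the case distinction you outline.
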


\begin{proof}
Let us introduce a few common notations for the different cases. Set $\Z_{\sym}=\Z$ if $r$ is odd or $\Z+\tfrac12$ if $r$ is even. The mapping
\[
\lambda\in\widehat{G}_N\mapsto\lambda+\rho
\]
establishes a bijection between the set of highest weights and $\{\mu\in\Z_\sym^r:\mu_1>\cdots>\mu_r\}$. The symmetric group $\mathfrak{S}_r$ acts in an obvious way on
\[
\Delta_r=\{\mu\in\Z_\sym^r:\mu_1>\cdots>\mu_r\}\cong\{\mu\in\Z_\sym^r:\mu_i\neq\mu_j,\ \forall i\neq j\}/\mathfrak{S}_r,
\]
where the symmetric group $\mathfrak{S}_r$ acts on $\mu$ by permutation of its components. For  $\mu$ with $\mu_i\not=\mu_j,\forall i\not=j,$  we denote by $[\mu]$ 
its decreasing rearrangement and $\sigma_\mu\in \mathfrak{S}_r$ the unique permutation such that $[\mu]_i=\mu_{\sigma(i)}$ for all $i$. We will prove each case separately using the previous notations. 
The explicit formulae for the characters $\chi_\lambda$ 
that we will use can be found in \cite{BtD} or in \cite[Sect. 2.3]{Mel}. When $\mu\in \Z_{\sym}^r$ with $\mu_i=\mu_j$ for some $i\not=j$, by convention $\chi_{[\mu]-\rho}=0$ and $\varepsilon(\sigma_{\mu})=0.$ The proof for all cases will rely on the following 
computation: for any $(z_1,\ldots,z_r)\in(\C^*)^r,$ any $(m_1,\ldots,m_r)\in\Z^r$ and any $k\in\N^*$, we have
\begin{equation}\label{eq:det}
\sum_{\ell=1}^r(z_\ell^k+z_\ell^{-k})\det(z_i^{m_j}-z_i^{-m_j})=\sum_{\ell=1}^r(\det(z_i^{m_j+k\delta_{j\ell}})+\det(z_i^{-m_j-k\delta_{j\ell}})).
\end{equation}

Let us prove \eqref{eq:det}. Using the Leibniz formula for determinant and the invariance of $\sum_\ell(z_\ell^k+z_\ell^{-k})$ by the action $\ell\mapsto \sigma(\ell)$ of the symmetric group, we have
\begin{align*}
\sum_{\ell=1}^r(z_\ell^k+z_\ell^{-k})\det(z_i^{m_j}-z_i^{-m_j}) = & \sum_{\sigma\in\mathfrak{S}_r}\ve(\sigma)\sum_{\ell=1}^r(z_\ell^k+z_\ell^{-k})\prod_{i=1}^r (z_{\sigma(i)}^{m_i}-z_{\sigma(i)}^{-m_i})\\
= & \sum_{\sigma\in\mathfrak{S}_r}\ve(\sigma)\sum_{\ell=1}^r(z_{\sigma(\ell)}^k+z_{\sigma(\ell)}^{-k})\prod_{i=1}^r (z_{\sigma(i)}^{m_i}-z_{\sigma(i)}^{-m_i}).
\end{align*}
We can then put the sum over $\mathfrak{S}_r$ back in by linearity, and for any $\sigma$ and any $\ell$ we have
\[
(z_{\sigma(\ell)}^k+z_{\sigma(\ell)}^{-k})\prod_{i=1}^r(z_{\sigma(i)}^{m_i}-z_{\sigma(i)}^{-m_i}) = (z_{\sigma(\ell)}^{m_\ell+k}-z_{\sigma(\ell)}^{-m_\ell-k}+z_{\sigma(\ell)}^{m_\ell-k}-z_{\sigma(\ell)}^{-m_\ell+k})\prod_{i\neq \ell}(z_{\sigma(i)}^{m_i}-z_{\sigma(i)}^{-m_i}).
\]
Summing over $\ell$ leads to \eqref{eq:det}. We can now prove \eqref{eq:Pieri_BCD} for each type of group.

Case $B_r$: let $g\in\SO(2r+1)$ be an element with eigenvalues $(z_1^{\pm 1},\ldots,z_r^{\pm 1},1)$. We have
\[
\chi_\lambda(g)=\frac{\det(z_i^{\lambda_j+\rho_j}-z_i^{-\lambda_j-\rho_j})}{\det(z_i^{\rho_j}-z_i^{-\rho_j})}.
\]
If we substitute $\mu=\lambda+\rho\in\Delta_r,$ we can write
\[
\chi_\lambda(g)=\chi_{\mu-\rho}(g)=\frac{\det(z_i^{\mu_j}-z_i^{-\mu_j})}{\det(z_i^{\rho_j}-z_i^{-\rho_j})},
\]
so that
\[
\Tr(g^k)\chi_\lambda(g)= \left(1+\sum_{\ell=1}^r (z_\ell^k+z_\ell^{-k})\right)\frac{\det(z_i^{\mu_j}-z_i^{-\mu_j})}{\det(z_i^{\rho_j}-z_i^{-\rho_j})}
\]
and by \eqref{eq:det} we obtain that
\begin{equation}
\Tr(g^k)\chi_\lambda(g)= \chi_\lambda(g) + \sum_{\ell=1}^r \frac{\det(z_i^{\mu_j+k\delta_{j\ell}}-z_i^{-(\mu_j+k\delta_{j\ell})})+\det(z_i^{\mu_j-k\delta_{j\ell}}-z_i^{-(\mu_j-k\delta_{j\ell})})}{\det(z_i^{\rho_j}-z_i^{-\rho_j})}\label{-----eq: PieriB}
\end{equation}
Set $\mu_{k,\ell}^+=(\mu_1,\ldots,\mu_\ell+k,\ldots,\mu_r)$ and $\mu_{k,\ell}^-=(\mu_1,\ldots,\mu_\ell-k,\ldots,\mu_r)$. We have
\begin{align*}
\Tr(g^k)\chi_\lambda(g) = & \chi_\lambda(g) + \sum_{\ell=1}^r \left(\ve(\sigma_{\mu_{k,\ell}^+})\chi_{[\mu_{k,\ell}^+]-\rho}(g)+\ve(\sigma_{\mu_{k,\ell}^-})\chi_{[\mu_{k,\ell}^-]-\rho}(g)\right),
\end{align*}
which yields \eqref{eq:Pieri_BCD} with $c_{\lambda,k}^\lambda=1$, $\ c_{\lambda,k}^{[\mu_{k,\ell}^+]-\rho}=\ve(\sigma_{[\mu_{k,\ell}^+]})$ for all $1\leq \ell\leq r$ and $c_{\lambda,k}^\mu=0$ for any other $\mu\in\widehat{G}_N$, which also proves \eqref{------eq:Loop  jumps}. Indeed, each $[\mu_{k,\ell}^\pm]-\rho$ corresponds to an element of $\widehat{G}_N$ if and only if $[\mu_{k,\ell}^\pm]\in\Delta_r$, and if it is not the case, then $\chi_{[\mu_{k,\ell}^\pm]}$ and summand in the right-hand-side of \eqref{-----eq: PieriB} vanish. Finally, there are $2r+1$ nonzero coefficients, whose absolute value is always equal to 1, hence \eqref{------eq:Bound number  possible jumps} is also satisfied.

Case $C_r$: let $g\in\Sp(r)$ be an element with eigenvalues $(z_1^{\pm 1},\ldots,z_r^{\pm 1})$. We have again
\[
\chi_\lambda(g)=\chi_{\mu-\rho}(g)=\frac{\det(z_i^{\mu_j}-z_i^{-\mu_j})}{\det(z_i^{\rho_j}-z_i^{-\rho_j})}.
\]
It follows that
\[
\Tr(g^k)\chi_\lambda(g) = \sum_{\ell=1}^r \left(\ve(\sigma_{\mu_{k,\ell}^+})\chi_{[\mu_{k,\ell}^+]-\rho}(g)+\ve(\sigma_{\mu_{k,\ell}^-})\chi_{[\mu_{k,\ell}^-]-\rho}(g)\right),
\]
as expected.

Case $D_r$: let $g\in\SO(2r)$ be an element with eigenvalues $(z_1^{\pm 1},\ldots,z_r^{\pm 1})$. We have
\[
\chi_\lambda(g)=\chi_{\mu-\rho}(g)=\frac{\det(z_i^{\mu_j}-z_i^{-\mu_j})+\det(z_i^{\mu_j}+z_i^{-\mu_j})}{\det(z_i^{\rho_j}+z_i^{-\rho_j})},
\]
therefore
\begin{align*}
\Tr(g^k)\chi_\lambda(g)= & \left(\sum_{\ell=1}^r (z_\ell^k+z_\ell^{-k})\right)\frac{\det(z_i^{\mu_j}-z_i^{-\mu_j})+\det(z_i^{\mu_j}+z_i^{-\mu_j})}{\det(z_i^{\rho_j}+z_i^{-\rho_j})}\\
= & \sum_{\ell=1}^r \left(\ve(\sigma_{\mu_{k,\ell}^+})\chi_{[\mu_{k,\ell}^+]-\rho}(g)+\ve(\sigma_{\mu_{k,\ell}^-})\chi_{[\mu_{k,\ell}^-]-\rho}(g)\right).
\end{align*}
%
%$======================$
%
%Let $g\in\SO(2N+1)$ be an element with eigenvalues $(z_1^{\pm 1},\ldots,z_N^{\pm 1},1)$. We have $W=(\Z/2\Z)^{2N+1}\ltimes S_{2N+1}$ and therefore
%
%\begin{align*}
%\Tr(g^k)\chi_\lambda(g)= & \left(1+\sum_{i=1}^N (z_i+z_i^{-1})\right)\frac{\sum_{\sigma\in S_{2N+1}}\sum_{\eta\in(\Z/2\Z)^{2N+1}}\varepsilon(\sigma\cdot\eta)z_i^{(\lambda+\rho)_{\sigma\cdot\eta(i)}} }{\sum_{\sigma\in S_{2N+1}}\sum_{\eta\in(\Z/2\Z)^{2N+1}}\varepsilon(\sigma\cdot\eta)z_i^{\rho_{\sigma\cdot\eta(i)}}}\\
%= & \left(1+\sum_{i=1}^N (z_i+z_i^{-1})\right)
%\end{align*}

\end{proof}

\begin{proof}[Proof of Theorem \ref{__THM:simple non-contrac} for $g=1$ and non-separating, simple loops] Let us set $T=|a|.$ We shall compute the expectation and the variance of $W_\ell$  under $\YM_{M,a,G_N},$ and show that both converge to zero as $N\to\infty.$ Assume that $G_N$ is  of type $X_r$ and a subgroup of $\mathrm{GL}_n(\C)$ as\footnote{so that $n$ is respectively  $r,r+1,2r+1$ for types $\tilde A_r,A_r,B_r$ and $2r$ for types $\{C_r,D_r\}.$ } in section \ref{sec:Intro Class Gp}.   
Using Corollary \ref{cor:Density_sep} and expanding in characters yield
\begin{align*}
n\E[W_{\ell^k}]&= Z_{1,T,X_r}^{-1}\int_{G_N} \varphi_{T,1}(h) \Tr(h^k)dh\\
&= Z_{1,T,X_r}^{-1}\sum_{\l \in\widehat{G}_N} e^{-\frac{T}{2}c_\l}  I_\l,
\end{align*}
where  for all $\l\in \widehat{G}_N,$ using the orthogonality of characters,
$$I_\l=\int_{G_N} \Tr(h^k) \chi_\l(h)\chi_{\l}(h^{-1})dh=\sum_{\mu\in \widehat{G}_N} c_{\l,k}^\mu \int_{G_N} \chi_\mu(h)\chi_\l(h^{-1})dh=c_{\l,k}^\l.  $$
Thanks to \eqref{------eq:Loop  jumps}, it follows that 
\begin{equation}
|\E[W_{\ell^k}]|\le (n Z_{1,T,X_r})^{-1} \sum_{\l\in\widehat{G}_N} e^{-\frac T 2 c_\l}=\frac{1}{n}. \label{------eq:Bound E}
\end{equation}
Similarly, 
\begin{align*}
n^2\E(|W_{\ell^k}|^2)&=Z_{1,T,X_r}^{-1}\int_{G_N} |\Tr(h^k)|^2  \varphi_{T,1}(h)dh= Z_{1,T,X_r}^{-1}\sum_{\l\in\widehat{G}_N} e^{-\frac T 2 c_\l} J_\l, 
\end{align*}
where 
\begin{align*}
J_\l&=\int_{G_N} \Tr(h^k)\chi_\l(h)\Tr(h^{-k})\chi_\l(h^{-1})dh\\
&=\sum_{\mu,\nu} c_{\l,k}^\mu c_{\l,k}^\nu \int_{G} \chi_\mu(h)\chi_\nu(h^{-1})dh= \sum_{\mu\in\widehat{G}_N} (c_{\l,k}^\mu)^2. 
\end{align*}
Using Lemma \ref{_____Lem:Pieri Gen}, we conclude that 
$$0\le J_\l=\sum_{\mu\in\widehat{G}_N} |c_{\l,k}^\mu|\le n$$
and
\begin{equation}
\E[|W_{\ell^k}|^2]\le n^{-2}Z_{1,T,X_r}^{-1}\sum_{\l\in\widehat{G}_N} e^{-\frac T 2 c_\l} J_\l= \frac{1}{n}.\label{------eq:Bound V}
\end{equation}
From \eqref{------eq:Bound E} and \eqref{------eq:Bound V}, we conclude that the expectation and variance of $W_{\ell^k}$ under $\YM_{M,a,G_N}$ both vanish as $N\to\infty.$  Therefore, $W_{\ell^k}$ converges to $0$ in probability.
\end{proof}

\begin{rmk} For the $A_r$ type, for all $k\neq 0$ and $\l\in \L_r,$ $c_{\l,k}^\l=0,$ and the first part of the above proof yields another argument for $\E[W_{\ell^k}]=0$ for all $r\ge 1.$ 
\end{rmk}

\section*{Acknowledgments} Both authors are grateful to Thierry L\'evy for several discussions which inspired the main content of this article and to B. Hall for his feedback on a first draft of this work.  Many  thanks are also due to Neil O'Connell, as this joint work  started during a visit of 
the 
second author to the first one at  UCD and was founded by the  ERC Advanced Grant 669306.    A.D. acknowledges partial support from  "The Dr Perry James (Jim) Browne Research Centre on Mathematics and its Applications" 
individual grant. T.L. acknowledges partial support from ANR AI chair BACCARAT (ANR-20-CHIA-0002).  Many thanks are due to two anonymous referees whose careful reading and comments greatly improved the first version of this text.

\bibliography{Biblio}
\end{document}